\pdfoutput=1
\PassOptionsToPackage{hidelinks,linktocpage,colorlinks=true,allcolors=blue,linktoc=all}{hyperref}
\PassOptionsToPackage{capitalise,nameinlink,noabbrev}{cleveref}

\newif\ifusecolttemplate
\IfFileExists{includes/colt2025.cls}{%
  \IfFileExists{algorithm2e.sty}{\usecolttemplatetrue}{\usecolttemplatefalse}%
}{\usecolttemplatefalse}
\ifusecolttemplate
  \documentclass[final,cleveref]{includes/colt2025}
  \jmlrpages{}%
  \crefname{equation}{}{}
\else
  \documentclass[11pt]{article}
\fi

\usepackage{amsmath,amssymb,mathtools,mathrsfs}
\ifusecolttemplate\else
  \usepackage{amsthm}
\fi
\usepackage{xparse}
\usepackage{xargs}
\usepackage{xcolor}
\usepackage{tikz}
\usepackage{enumitem}
\usepackage{booktabs}
\usepackage{aliascnt}
\usepackage{url}
\ifusecolttemplate\else
  \usepackage[margin=1.05in]{geometry}
  \usepackage[colorlinks=true,linkcolor=blue,citecolor=blue,urlcolor=blue]{hyperref}
  \usepackage[capitalise,nameinlink,noabbrev]{cleveref}
\fi

\usepackage{algorithm}
\usepackage{algcompatible}
\algnewcommand{\lst}{\texttt{lst}}
\algnewcommand{\slst}{\texttt{slst}}
\algnewcommand{\SEND}{\textbf{send}}

\newsavebox{\algleft}
\newsavebox{\algright}

\makeatletter
\newcounter{algorithmicH}
\let\oldalgorithmic\algorithmic
\renewcommand{\algorithmic}{%
  \stepcounter{algorithmicH}
  \oldalgorithmic}
\renewcommand{\theHALG@line}{ALG@line.\thealgorithmicH.\arabic{ALG@line}}
\makeatother



\usepackage[natbib, backend=biber, maxcitenames=3, minalphanames=3, maxbibnames=99, style=alphabetic, hyperref, backref, useprefix=true, uniquename=false, doi=false,url=false,eprint=false]{biblatex} %

\usepackage{csquotes}               %
\bibliography{refs}        %

\DeclareCiteCommand{\cite}
  {\usebibmacro{prenote}}
  {\usebibmacro{citeindex}%
   \printtext[bibhyperref]{\usebibmacro{cite}}}
  {\multicitedelim}
  {\usebibmacro{postnote}}

\DeclareCiteCommand*{\cite}
  {\usebibmacro{prenote}}
  {\usebibmacro{citeindex}%
   \printtext[bibhyperref]{\usebibmacro{citeyear}}}
  {\multicitedelim}
  {\usebibmacro{postnote}}

\DeclareCiteCommand{\parencite}[\mkbibparens]
  {\usebibmacro{prenote}}
  {\usebibmacro{citeindex}%
    \printtext[bibhyperref]{\usebibmacro{cite}}}
  {\multicitedelim}
  {\usebibmacro{postnote}}

\DeclareCiteCommand*{\parencite}[\mkbibparens]
  {\usebibmacro{prenote}}
  {\usebibmacro{citeindex}%
    \printtext[bibhyperref]{\usebibmacro{citeyear}}}
  {\multicitedelim}
  {\usebibmacro{postnote}}

\DeclareCiteCommand{\citeauthor}
  {\usebibmacro{prenote}}
  {\ifciteindex
     {\indexnames{labelname}}
     {}%
   \printtext[bibhyperref]{\printnames{labelname}}}
  {\multicitedelim}
  {\usebibmacro{postnote}}

\DeclareCiteCommand{\footcite}[\mkbibfootnote]
  {\usebibmacro{prenote}}
  {\usebibmacro{citeindex}%
  \printtext[bibhyperref]{ \usebibmacro{cite}}}
  {\multicitedelim}
  {\usebibmacro{postnote}}

\DeclareCiteCommand{\footcitetext}[\mkbibfootnotetext]
  {\usebibmacro{prenote}}
  {\usebibmacro{citeindex}%
   \printtext[bibhyperref]{\usebibmacro{cite}}}
  {\multicitedelim}
  {\usebibmacro{postnote}}

\DeclareCiteCommand{\textcite}
  {\boolfalse{cbx:parens}}
  {\usebibmacro{citeindex}%
   \printtext[bibhyperref]{\usebibmacro{textcite}}}
  {\ifbool{cbx:parens}
     {\bibcloseparen\global\boolfalse{cbx:parens}}
     {}%
   \multicitedelim}
  {\usebibmacro{textcite:postnote}}

\newbibmacro{string+doiurlisbn}[1]{%
  \iffieldundef{doi}{%
    \iffieldundef{url}{%
      \iffieldundef{isbn}{%
        \iffieldundef{issn}{%
          #1%
        }{%
          \href{http://books.google.com/books?vid=ISSN\thefield{issn}}{#1}%
        }%
      }{%
        \href{http://books.google.com/books?vid=ISBN\thefield{isbn}}{#1}%
      }%
    }{%
      \href{\thefield{url}}{#1}%
    }%
  }{%
    \href{https://doi.org/\thefield{doi}}{#1}%
  }%
}

\DeclareFieldFormat{title}{\usebibmacro{string+doiurlisbn}{\mkbibemph{#1}}}
\DeclareFieldFormat[article,inbook,incollection,inproceedings]{title}%
    {\usebibmacro{string+doiurlisbn}{#1}}
\input{includes/definitions.tex}%
\newcommand{\notationcommand}[3]{%
  \providecommand{#1}{}%
  \renewcommand{#1}{\newlink{#2}{#3}}%
}

\let\oldell\ell
\let\oldmu\mu
\let\oldeta\eta
\let\oldtau\tau
\let\oldGamma\Gamma
\let\oldDelta\Delta
\let\oldomega\omega
\let\oldrho\rho
\let\oldzeta\zeta
\let\oldpartial\partial
\let\olddelta\delta
\let\oldTheta\Theta
\let\oldOmega\Omega
\let\oldnu\nu

\notationcommand{\p}{def:p}{p}
\notationcommand{\q}{def:q}{q}
\notationcommand{\d}{def:dimension}{d}
\notationcommand{\RR}{def:radius}{R}
\notationcommand{\G}{def:lipschitz}{G}
\notationcommand{\T}{def:horizon}{T}
\notationcommand{\eps}{def:accuracy}{\varepsilon}
\notationcommand{\delta}{def:confidence}{\olddelta}
\notationcommand{\f}{def:objective}{f}
\NewDocumentCommand{\fstar}{o}{%
  \IfNoValueTF{#1}{\newlink{def:optimum}{f^{\star}}}{\newlink{def:optimum}{f^{\star}_{#1}}}%
}
\notationcommand{\g}{def:subgradient}{g}
\notationcommand{\Q}{def:domain}{\mathcal Q}
\notationcommand{\B}{def:ball}{B}
\notationcommand{\ell}{def:norm}{\oldell}
\newcommand{\conj}{\newlink{def:conjugate}{\ast}}
\notationcommand{\partial}{def:subgradient}{\oldpartial}
\notationcommand{\R}{def:basic-notation}{\mathbb R}
\notationcommand{\E}{def:expectation}{\mathbb E}
\notationcommand{\Prb}{def:probability}{\mathbb P}
\notationcommand{\cN}{def:gaussian}{\mathcal N}
\notationcommand{\Z}{def:gaussian}{Z}
\notationcommand{\Id}{def:gaussian}{I}
\notationcommand{\Sphere}{def:sphere}{\mathbb S}
\notationcommand{\sphmu}{def:sphere-measure}{\oldmu}
\notationcommand{\pospart}{def:positive-part}{+}
\notationcommand{\1}{def:indicator}{\mathbf 1}
\notationcommand{\dist}{def:distance}{\operatorname{dist}}
\renewcommand{\norm}[1]{%
  \mathopen{\newlink{def:norm}{\left\lVert\vphantom{#1}\right.\kern-\nulldelimiterspace}}%
  #1%
  \mathclose{\newlink{def:norm}{\kern-\nulldelimiterspace\left.\vphantom{#1}\right\rVert}}%
}
\renewcommand{\ip}[2]{\newlink{def:inner-product}{\left\langle #1,#2\right\rangle}}

\notationcommand{\loss}{def:online-loss}{\oldell}
\notationcommand{\epssign}{def:online-signs}{\varepsilon}
\notationcommand{\Reg}{def:regret}{\operatorname{Reg}}
\notationcommand{\xonline}{def:online-iterates}{x}
\notationcommand{\xchase}{def:chasing-iterates}{x}
\notationcommand{\xquery}{def:query-points}{x}
\notationcommand{\ebasis}{def:standard-basis}{e}
\notationcommand{\Nphase}{def:phase-horizon}{N}
\notationcommand{\stail}{def:head-tail-exponent}{s}
\notationcommand{\K}{def:requests}{K}
\notationcommand{\z}{def:selector-points}{z}
\notationcommand{\zhat}{def:approximate-center}{\widehat z}
\notationcommand{\model}{def:bundle-model}{m}
\notationcommand{\lb}{def:lower-bound}{\oldell}
\notationcommand{\U}{def:upper-bound}{U}
\notationcommand{\Gamma}{def:phase-gap}{\oldGamma}
\NewDocumentCommand{\hminus}{o}{%
  \IfNoValueTF{#1}{\newlink{def:lower-level}{h_{-}}}{\newlink{def:lower-level}{h_{-}^{#1}}}%
}
\NewDocumentCommand{\hplus}{o}{%
  \IfNoValueTF{#1}{\newlink{def:upper-level}{h_{+}}}{\newlink{def:upper-level}{h_{+}^{#1}}}%
}
\DeclareRobustCommand{\serious}{\newlink{def:serious-step}{serious}}
\DeclareRobustCommand{\nullstep}{\newlink{def:null-step}{null}}
\notationcommand{\oracle}{def:oracle}{\mathfrak O}
\notationcommand{\selector}{def:selector-rule}{\mathsf C}
\NewDocumentCommand{\xbest}{o}{%
  \IfNoValueTF{#1}{\newlink{def:best-query}{x_{\mathrm{best}}}}{\newlink{def:best-query}{x_{\mathrm{best}}^{#1}}}%
}
\notationcommand{\xhat}{def:output}{\widehat x}
\notationcommand{\rho}{def:reduction-exponent}{\oldrho}

\notationcommand{\supporth}{def:support-function}{h}
\notationcommand{\st}{def:steiner}{\operatorname{st}}
\notationcommand{\omega}{def:mean-width}{\oldomega}
\notationcommand{\shat}{def:empirical-steiner}{\widehat s}
\notationcommand{\Nsample}{def:sample-count}{N}
\notationcommand{\errE}{def:sampling-error}{E}
\notationcommand{\tau}{def:threshold}{\oldtau}
\NewDocumentCommand{\Rhead}{o}{%
  \IfNoValueTF{#1}{\newlink{def:head-radius}{R_{1}}}{\newlink{def:head-radius}{R_{1}^{#1}}}%
}
\notationcommand{\Rtail}{def:tail-radius}{R}
\notationcommand{\rmin}{def:tail-exponent}{r}
\notationcommand{\snear}{def:head-exponent}{s}
\NewDocumentCommand{\Ld}{o}{%
  \IfNoValueTF{#1}{\newlink{def:log-dimension}{L_{\d}}}{\newlink{def:log-dimension}{L_{\d}^{#1}}}%
}
\notationcommand{\Lift}{def:lift}{\mathcal L}
\notationcommand{\mu}{def:mu}{\oldmu}
\notationcommand{\eta}{def:eta}{\oldeta}
\notationcommand{\gdir}{def:perturbation}{g}
\notationcommand{\usample}{def:sample-optimizer}{u}
\notationcommand{\vsample}{def:sample-optimizer}{v}
\notationcommand{\F}{def:sample-objective}{F}
\NewDocumentCommand{\Cgauss}{o}{%
  \IfNoValueTF{#1}{\newlink{def:gaussian-center}{\mathsf C_{\p,\q,\T}}}{\newlink{def:gaussian-center}{\mathsf C_{\p,\q,\T}^{#1}}}%
}
\NewDocumentCommand{\Phip}{o}{%
  \IfNoValueTF{#1}{\newlink{def:p-energy}{\Phi_{\p}}}{\newlink{def:p-energy}{\Phi_{\p}^{#1}}}%
}
\notationcommand{\D}{def:bregman}{D}
\NewDocumentCommand{\Cp}{o}{%
  \IfNoValueTF{#1}{\newlink{def:energy-center}{\mathsf C_{\p}}}{\newlink{def:energy-center}{\mathsf C_{\p}^{#1}}}%
}
\notationcommand{\Chat}{def:empirical-center}{\widehat C}
\notationcommand{\Y}{def:center-sample}{Y}
\notationcommand{\Yhat}{def:numerical-sample}{\widehat Y}
\NewDocumentCommand{\Esol}{o}{%
  \IfNoValueTF{#1}{\newlink{def:solver-error}{E_{\mathrm{sol}}}}{\newlink{def:solver-error}{E_{\mathrm{sol}}^{#1}}}%
}
\NewDocumentCommand{\Ectr}{o}{%
  \IfNoValueTF{#1}{\newlink{def:center-error}{E_{\mathrm{ctr}}}}{\newlink{def:center-error}{E_{\mathrm{ctr}}^{#1}}}%
}
\notationcommand{\mval}{def:sample-value}{m}
\notationcommand{\Delta}{def:value-increment}{\oldDelta}
\notationcommand{\ubar}{def:mean-components}{\bar u}
\notationcommand{\vbar}{def:mean-components}{\bar v}
\notationcommand{\zeta}{def:objective-tolerance}{\oldzeta}
\notationcommand{\cX}{def:function-sets}{\mathcal X}
\notationcommand{\cG}{def:function-sets}{\mathcal G}
\notationcommand{\cL}{def:linear-class}{\mathcal L}
\notationcommand{\cF}{def:function-class}{\mathcal F}
\notationcommand{\fat}{def:fat-shattering}{\operatorname{fat}}

\notationcommand{\bigO}{def:asymptotics}{O}
\notationcommand{\bigOtilde}{def:soft-asymptotics}{\widetilde O}
\notationcommand{\Theta}{def:asymptotics}{\oldTheta}
\notationcommand{\Omega}{def:asymptotics}{\oldOmega}
\notationcommand{\Thetatilde}{def:soft-asymptotics}{\widetilde\oldTheta}
\renewcommand{\bigo}[1]{\bigO(#1)}

\renewcommand{\bigop}[2]{\bigO_{#1}(#2)}
\renewcommand{\bigopl}[2]{\bigO_{#1}\left(#2\right)}
\renewcommand{\bigotilde}[1]{\bigOtilde(#1)}
\renewcommand{\bigotildel}[1]{\bigOtilde\left(#1\right)}

\renewcommand{\bigotildepl}[2]{\bigOtilde_{#1}\left(#2\right)}

\hypersetup{colorlinks=true,allcolors=blue,linktoc=all,hypertexnames=false}

\ifusecolttemplate
  \newaliascnt{assumption}{theorem}
  
  \aliascntresetthe{assumption}
  \newaliascnt{fact}{theorem}
  \newtheorem{fact}[fact]{Fact}
  \aliascntresetthe{fact}
\else
  \theoremstyle{plain}
  \newtheorem{theorem}{Theorem}[section]
  \newcommand{\sharedtheorem}[2]{%
    \newaliascnt{#1}{theorem}%
    \newtheorem{#1}[#1]{#2}%
    \aliascntresetthe{#1}%
  }
  \sharedtheorem{lemma}{Lemma}
  \sharedtheorem{fact}{Fact}
  \sharedtheorem{proposition}{Proposition}
  \sharedtheorem{corollary}{Corollary}
  \theoremstyle{definition}
  \sharedtheorem{definition}{Definition}
  \sharedtheorem{remark}{Remark}
  \sharedtheorem{assumption}{Assumption}
  \newcommand{\coltauthor}[1]{\author{#1}}
\fi

\crefname{algorithm}{Algorithm}{Algorithms}
\crefname{assumption}{Assumption}{Assumptions}
\crefname{fact}{Fact}{Facts}
\crefname{proposition}{Proposition}{Propositions}
\crefname{corollary}{Corollary}{Corollaries}
\crefname{lemma}{Lemma}{Lemmas}
\crefname{theorem}{Theorem}{Theorems}
\crefname{definition}{Definition}{Definitions}
\crefname{remark}{Remark}{Remarks}

\ifusecolttemplate
  \makeatletter
  \apptocmd{\@begintheorem}{\phantomsection}{}{}
  \apptocmd{\@opargbegintheorem}{\phantomsection}{}{}
  \makeatother
\fi

\title{Stable Movement for Nondual Lipschitz Convex Optimization: Efficiency and Nearly Optimal Oracle Rates}
\ifusecolttemplate
  \makeatletter
  \renewcommand{\@shorttitle}{Stable Movement for Nondual Lipschitz Convex Optimization}
  \makeatother
\fi

\coltauthor{
 \Name{David Martínez-Rubio}\Email{\href{mailto:david.martinezrubio@imdea.org}{david.martinezrubio@imdea.org}}\\
 \addr IMDEA Software Institute, Madrid, Spain
 \AND
 \Name{Cristóbal Guzmán}\Email{\href{mailto:crguzmanp@uc.cl}{crguzmanp@uc.cl}}\\
 \addr Institute for Mathematical and Computational Engineering, Faculty of Mathematics and
 School of Engineering, Pontificia Universidad Católica de Chile, Santiago, Chile
    }
\date{}

\newcommand\blfootnote[1]{%
\begingroup
\renewcommand\thefootnote{}\footnote{#1}%
\addtocounter{footnote}{-1}%
\endgroup
}

\begin{document}
\maketitle

\begin{abstract}
We study efficient algorithms for realizing the first-order oracle complexity of optimization of $\G$-Lipschitz convex functions with respect to the $\ell_{\q}$-norm over an $\ell_{\p}$-ball of radius $\RR$, where $1\leq \p,\q\leq \infty$.
For $\p<\q$, we obtain error
$\bigOtilde_{\p,\q}(\G\RR/\T^{1/\p-(1/\q-1/2)_{\pospart}})$ after $\T$ oracle queries,
efficiently realizing the nearly optimal rates of \citep{martinezrubio2026firstorder}, thereby resolving the nonsmooth end of the COLT 2015 open problem \citep{guzman2015open}. In particular, the rate is $\bigOtilde(\G\RR/\T)$ for Euclidean Lipschitzness over an $\ell_1$-ball of radius $\RR$ ($\p=1,\q=2$).
Our solution consists of reducing convex Lipschitz optimization to the chasing
nested convex sets problem in sublevel sets of an evolving bundle \citep{lemarechal1995new,bansal2020nested}: at each query we either find a point with low function value or we produce a deep cut in the current sublevel of the bundle, that we chase. The dichotomy between stability of selectors and forced movement by deep cuts bounds the number of iterations of the algorithm near optimally. 
For nested subsets of $\RR \B_{\p}^{\d}$, we introduce a novel notion of stable center whose movement is bounded by $\bigOtilde_{\p,\q}(\RR\T^{1-1/\p+(1/\q-1/2)_{\pospart}})$ in the $\ell_{\q}$-norm after $\T$ steps, which we show is nearly optimal in high dimensions.
A Monte Carlo average of the proposed selector achieves near-optimal rates with high probability and can be implemented in polynomial time for our optimization algorithm in the real-arithmetic model.
\end{abstract}

\blfootnote{\color{darkgray}Most non-local notation in this work links to its definition, using \href{https://damaru2.github.io/general/notations_with_links/}{this code}; for example, ${\newlink{def:lipschitz}{\color{darkgray}G}}$ links to its definition as the Lipschitz constant in $\norm{\cdot}_q$ for the optimization problem we consider in this work.}

\section{Introduction}

Lipschitz convex optimization is a classical and highly influential problem. Foundational algorithmic developments in this area include cutting-plane \citep{nemirovskiYudin1983,Khachiyan:1979}, subgradient \citep{Shor:1985}, proximal \citep{Martinet:1970}, and mirror descent methods \citep{nemirovskiYudin1983}. Each of these developments has played a key role in areas including theoretical computer science \citep{groetschelLovaszSchrijver1993}, signal processing \citep{Beck:2009}, and machine learning \citep{Bottou:2018}, among others.

In this work, we minimize a convex Lipschitz function on $\RR \B_{\p}^{\d}$ using a local first-order oracle, and
the norm for measuring Lipschitzness need not agree with the domain norm: we measure Lipschitzness in the $\ell_{\q}$ norm; here $1\leq \p,\q\leq \infty$.  %
In this context, \citet{guzman2015open} asked for the high-dimensional minimax complexity of
this problem, as well as its H\"older-smooth analogues
\citep{guzman2015open,guzmanNemirovski2015lower}. The classical case
$\p\ge \q$ or $\p \geq 2$ was already understood up to logarithmic factors, since mirror descent algorithms are known to be near optimal  in those regimes\footnote{The case $\p \geq 2$ when $\p < \q$ was also stated as an open question \citep{guzman2015open}. However, the lower bound is attained by an instance of mirror descent. Indeed, notice that $\G_{\q}$-Lipschitz function in $\ell_{\q}$ is also $\G_{\q}$-Lipschitz in $\ell_{\p}$. In fact the best Lipschitz constant $\G_{\p}$ in the $\ell_{\p}$ geometry can be $\G_{\p} \ll \G_{\q}$. Hence, for $\p \geq 2$, running mirror descent over $\RR \B_{\p}^{\d}$ and using the $\ell_{\p}$ geometry gives a rate of $\bigo{\RR \G_{\p} / \T^{1 / \p}}$, which is no larger than the requested rate $\bigo{\RR \G_{\q} / \T^{1 / \p}}$.}. The open regime is $\p< \min\{\q, 2\}$, where the feasible set is much smaller than the ball suggested by the regularity norm. %

At the nonsmooth endpoint with $\p<\q$, the predicted high-dimensional rate is:
\begin{equation}\label{eq:intro-target}
    \widetilde{\Theta}\left(\frac{\G\RR}{\T^{\frac{1}{\p} -\left( \frac{1}{\q} - \frac{1}{2}\right)_{\pospart}}} \right),
\end{equation}
where $\newtarget{def:positive-part}{(\cdot)_{\pospart}} = \max\{0, \cdot\}$. %
For $(\p,\q)=(1,2)$, this is $\Thetatilde(\G\RR/\T)$. The rate is faster than the previously known bounds with efficient algorithms, which were given by mirror descent, where the fastest known rate in the most benign geometries is $\bigo{1 / \sqrt{\T}}$.
\citet{martinezrubio2026firstorder} showed these oracle rates were attainable albeit with an information theoretical analysis that yielded an algorithm with exponential runtime; see also the references therein. The positive results in the aforementioned paper motivated the present work. 
An independent work \citep{ouyang2026algorithm} appeared on arXiv as we prepared the writing of our results. The work studies the specific case of quadratics when $(\p, \q)=(1,2)$, linked to the smooth end of the open problem in \citep{guzman2015open}. It contains an idea that we had also used, about splitting coordinates into a head (their absolute value is large enough) and a tail (the rest), which we analyze more generally. %
(see \cref{lem:head-tail}).

Our solution involves the chasing nested convex sets problem \citep{bansal2020nested,argue2019nearly,sellke2020optimal}. It consists of an online game where at each round an adversary selects a convex set $\K_t$ and a player selects a point in it $\z_t \in \K_t$, while the sets are nested $\K_{t+1} \subseteq \K_{t} \subseteq \cdots \subseteq \K_0$ and the initial point $\z_0 \in \K_0$ is fixed. The aim is usually to compare the movement cost of the player in some norm after $\T$ rounds, $\sum_{t=0}^{\T-1} \norm{\z_t - \z_{t+1}}$, relative to the movement of an offline strategy that knows $\K_{\T}$ in advance. For our purposes, we will actually look at solely bounding the movement cost $\sum_{t=0}^{\T-1} \norm{\z_t - \z_{t+1}}$ in absolute terms. For instance, \citet{bubeck2020nested} show that for $\ell_2$-norm, one can provide a strategy with movement bounded by $\bigo{\omega(\K_0)\sqrt{\d \log(\T / \omega(\K_0))}}$, where $\omega(\K_0)$ is the mean width of $\K_0$ and $\d$ is the dimension, which results in $\bigOtilde(1)$ movement in the unit $\ell_1$ ball. Previous studies focused mainly on cases where $\T \gg \d$. In this work, we introduce new selectors that obtain improved and nearly dimension-independent movement bounds when $\T\leq \d$, which is crucial for our high-dimensional problem. %

Our optimization algorithm is inspired by the two-level bundle method of \citet{lemarechal1995new}, but we make a direct connection to the problem of chasing  nested convex bodies and reduce the optimization problem to it. The idea is simple and elegant. After $t$ queries $(\xquery_i)_{i\leq t}$ and oracle answers $(\f(\xquery_i),\g_i)$ where $\g_i\in\partial \f(\xquery_i)$, let
\[
  \model_t(x)=\max_{i\le t}\{\f(\xquery_i)+\ip{\g_i}{x-\xquery_i}\},
  \qquad
\lb_t\le\min_{x\in\Q}\model_t(x),
  \qquad
  \U_t=\min_{i\le t}\f(\xquery_i).
\]
Here $\lb_t$ is a certified lower bound maintained by \cref{alg:lipschitz-bundle}.
Convexity gives $\lb_t\le \min_x \f(x)\le\U_t$.  We run the algorithm in phases. Freeze
the gap $\Gamma=\U-\lb$ and two levels
$\hminus=\lb+\Gamma/4<\hplus=\lb+3\Gamma/4$, and query a stable selector
$\z_t\in\K_t:=\{x\in\Q:\model_t(x)\le\hminus\}$.  If
$\f(\z_t)\le\hplus$, the upper bound improves by a constant fraction.  Otherwise
the new tangent $a_t(x)=\f(\z_t)+\ip{\g_t}{x-\z_t}$ gives
$\K_{t+1}=\K_t\cap\{a_t\le\hminus\}$.  Since
$\norm{\g_t}_{\q^{\conj}}\le\G$, this retained halfspace is at $\ell_{\q}$-distance
at least $(\hplus-\hminus)/\G=\Gamma/(2\G)$ from $\z_t$.  Thus every such
\emph{deep cut} forces movement of the selector, unless the new body is empty;
see \cref{fig:bundle-intuition}.
An upper bound on the total movement $\sum_{t=1}^{\T}\norm{\z_t-\z_{t+1}}_{\q}$ after $\T$ steps coming from the chasing convex bodies problem along with the fact that the movement is lower bounded by $\T \cdot \Gamma/(2\G)$ gives a maximum number of steps until the certified lower bound increases by at least $\Gamma/8$ or we have $\f(\z_{t}) \leq \hplus$. Either way, we decrease the gap by a constant factor and continue to the next phase.

Since certifying emptyness of a set can be computationally challenging, in the actual algorithm, we can in fact stop before seeing an empty sublevel set, if we can certify that an estimate for the minimum of the current bundle increased enough with respect to the previous one, cf. Line \ref{line:buffered-bundle-emptiness} of \cref{alg:lipschitz-bundle}.

\begin{figure}[h!]
  \centering
  \begin{tikzpicture}[
      x=1cm,
      y=.85cm,
      >=stealth,
      every node/.style={font=\small}
    ]
    \draw[->, black!65] (0.65,0.35) -- (11.45,0.35);
    \draw[->, black!65] (0.8,0.2) -- (0.8,5.65);
    \draw[densely dashed, black!45] (0.8,2.25) -- (11.3,2.25);
    \draw[densely dashed, black!45] (0.8,4.0) -- (11.3,4.0);
    \node[anchor=east] at (0.7,2.25) {$\hminus$};
    \node[anchor=east] at (0.7,4.0) {$\hplus$};

    \draw[densely dotted, black!50] (0.9,5.36) -- (5.05,0.42);
    \draw[densely dotted, black!50] (1.0,1.24) -- (11.0,2.14);
    \draw[densely dotted, blue!55!black] (5.1,0.42) -- (10.6,5.48);
    \draw[very thick, black!75] (0.9,5.36) -- (4.1,1.55);
    \draw[very thick, black!75] (4.1,1.55) -- (6.6,1.80);
    \draw[very thick, blue!65!black] (6.6,1.80) -- (10.6,5.48);
    \node[black!65, anchor=east] at (1.35,4.55) {$a_0$};
    \node[black!65, anchor=south] at (5.15,1.72) {$a_1$};
    \node[black, anchor=west] at (10.25,4.85) {$a_2$};

    \draw[gray!65, line width=2.6pt] (3.51,2.25) -- (11.05,2.25);
    \draw[black, line width=3.2pt] (3.51,2.25) -- (7.09,2.25);
    \node[anchor=south] at (5.30,2.30) {$\K_{3}$};
    \node[text=gray!70, anchor=north] at (8.75,1.70) {old $K_2$};

    \coordinate (querypoint) at (9.70,4.65);
    \fill[blue!65!black] (querypoint) circle (2pt);
    \node[black, anchor=east] at (9.48,4.72)
      {$\f(\z_2)$};
    \draw[densely dotted, black!55] (9.70,0.35) -- (querypoint);
    \node[anchor=north] at (9.70,0.31) {$\z_2$};

    \draw[densely dotted, black!55] (7.09,2.25) -- (7.09,2.82);
    \draw[<->, semithick] (7.12,2.72) -- (9.67,2.72)
      node[midway, fill=white, inner sep=1.5pt] {$\ge\Gamma/(2\G)$};
  \end{tikzpicture}
    \caption{A one-dimensional section of a step where the function value is no less than $h_+$ (null step).  The old model
    $\max\{a_0,a_1\}$ lies below $\hminus$ at $\z_2$ (that is, $\z_2 \in \K_2$), whereas the new tangent
  satisfies $a_2(\z_2)=\f(\z_2)>\hplus$.  Adding $a_2$ reduces the
    $(\hminus)$-sublevel to the thick black segment ($\K_{3} \subset \K_{2}$), separated from
  $\z_2$ by at least $\Gamma/(2\G)$ as the subgradient norm is no larger than $\G$. Recall that $\Gamma$ is the duality gap at the beginning of a phase.}
  \label{fig:bundle-intuition}
\end{figure}

\paragraph{Contributions.}
The paper has three modular pieces.
\begin{enumerate}[leftmargin=2.2em]
  \item For chasing nested convex subsets of $\B_{\p}^{\d}$, with movement measured
  in the $\ell_{\q}$-norm, we prove that the optimal
  movement for the chasing nested convex bodies problem over $\T$ requests, up to logarithmic factors in $\d$ and $\T$, is
  $\Thetatilde_{\p,\q}\Big(\T^{1-\frac1\p+\left(\frac1\q-\frac12\right)_{\pospart}}\Big)$ for $\p<\q$ in the
        high-dimensional regime \(\T\leq \d\).
    \item We reduce Lipschitz convex optimization to nested convex-body
        chasing in \cref{thm:movement-reduction}, where we still get the optimization rate even if we allow for computing inexact versions of stable selectors, cf. \cref{lem:buffered-implementation}. %
    \item We compute a selector that succeeds with high probability and runs in polynomial time for our case in \cref{alg:lipschitz-bundle} with sets consisting of $\B_{\p}^{\d}$ intersected with halfspaces. Combined with the reduction gives our main result (\cref{thm:optimization-main}): an efficient
    algorithm attaining the rates of \citep{martinezrubio2026firstorder} and
    resolving the nonsmooth $\ell_{\p}/\ell_{\q}$ case of the COLT 2015
    open question \citep{guzman2015open} efficiently.\footnote{Note that the lower bounds in the open question are only applicable to deterministic algorithms \citep{guzman2015thesis,guzman2015open}, while our efficient algorithm is randomized. However, the analogous lower bounds also hold against randomized algorithms, using the same constructions and by following arguments from \citep{Braun:2017}, as we show in \cref{sec:LB-randomized}.}
\end{enumerate}

Every implemented query is feasible. Our polynomial-time optimization
guarantees use the real-arithmetic and sampling model in
\cref{thm:polynomial-implementation}.

\subsection{Online learning and online-to-batch}
In online learning with convex functions, the mirror descent algorithm  updates the current point 
using a subgradient of the observed loss and a Bregman penalty for moving
away from that point \citep{nemirovskiYudin1983,Beck:2003}.  With a suitable regularizer, it achieves nearly minimax
regret for a broad class of convex online learning problems
\citep{srebroSridharanTewari2011}.  This universality does not imply that
ordinary regret bounds recover every optimal rate for minimizing one fixed
convex function.
The obstruction is already one-dimensional.  It concerns reductions through
ordinary cumulative regret uniformly over adversarial linear losses; it does
not rule out every online-inspired construction.  Indeed, the classical
Rademacher lower bound already applies to the one-coordinate linear losses
$\newtarget{def:online-loss}{\loss_t}(x)=\G\epssign_t x_1$ on $\RR \B_{\p}^{\d}$, where the signs $\newtarget{def:online-signs}{\epssign_t}$ are
independent and uniform on $\{-1,1\}$, independently of the learner's
randomness.  For every possibly randomized
learner choosing $\newtarget{def:online-iterates}{\xonline_t}$, ordinary cumulative regret satisfies
\begin{equation*}
  \newtarget{def:regret}{\Reg_{\T}}
  \defi
  \sum_{t=1}^{\T}\loss_t(\xonline_t)
  -\min_{u\in \RR \B_{\p}^{\d}}\sum_{t=1}^{\T}\loss_t(u),
  \qquad
  \E\Reg_{\T}
  \geq
  \G\RR\,\E\abs{\sum_{t=1}^{\T}\epssign_t}
  =\Omega(\G\RR\sqrt \T);
\end{equation*}
see, for example, \citet[Section~7.1]{abernethy2009stochastic} and
\citet{cesaBianchiLugosi2006}.  Consequently, a standard adversarial-regret
bound followed by the ordinary online-to-batch conversion cannot certify
$o(\G\RR/\sqrt \T)$. We note that the worst-case complexity of mirror descent for optimization does not necessarily have to follow that of the worst-case example in the adversarial case along with online-to-batch conversion, but this argument provides some evidence that the algorithm may not be able to improve over $\bigO(1 / \sqrt{\T})$ under the general assumptions considered in this work. We also note that precisely the open regimes in \cref{eq:intro-target} are the ones where the convergence would be faster than $\bigO(1 / \sqrt{\T})$, including the fastest rate of $\bigOtilde(1 / \T)$ when $\p=1$, $\q \geq 2$.

This classical barrier allows adversarial losses that need not be
affine tangent lower bounds of one fixed convex objective.  Our method
exploits this additional consistency: all cuts accumulate into a single
lower model.

\subsection{Nested convex-body chasing and absolute movement}

Fix a norm $\norm{\cdot}$ on $\R^{\d}$.  In convex-body chasing, the player
starts at $\newtarget{def:chasing-iterates}{\xchase_0}$, sees a convex request $\K_t$, and then chooses $\xchase_t\in \K_t$,
paying movement $\sum_{t=1}^{\T}\norm{\xchase_t-\xchase_{t-1}}$.  After $\T$ iterations, the competitive benchmark
is the minimum of the same normed path length over all $y_t\in \K_t$ with
$y_0=\xchase_0$, where the offline path may use the entire request sequence in
advance.  An algorithm is $C$-competitive if its movement is at most $C$
times this clairvoyant optimum.  This problem was introduced by
\citet{friedmanLinial1993}.

For the nested problem, where $\K_{t} \subset \K_{t-1}$ for all $t$, \citet{bansal2020nested} first proved a finite
dimension-dependent competitive ratio, quantitatively
$6^{\d}(\d!)^2$.  \citet{argue2019nearly} improved this to $\bigO(\d\log \d)$ in every
normed space, nearly matching the $\Omega(\d)$ lower bound in $\ell_\infty^{\d}$.
In Euclidean space, \citet{bubeck2020nested} obtained
$\bigO(\sqrt{\d\log \d})$ and showed that the memoryless rule which follows the
Steiner point has the horizon-sensitive bound
$\bigO(\min\{\d,\sqrt{\d\log \T}\})$, where $\T$ is the number of requests.  The
Steiner point is a classical selector belonging to the requested body; its
support-function representation makes it stable under Hausdorff perturbations,
and it was known to have the optimal Euclidean Hausdorff-Lipschitz
constant and the property that the Steiner point of a Minkowski sum is the sum
of the Steiner points.  Building on this idea, \citet{sellke2020optimal}
introduced a functional Steiner point and a corresponding potential, extending
the approach to non-nested convex-body chasing.  They obtained competitive
ratio $\d$ for convex bodies and $\d+1$ for convex functions in an arbitrary
$\d$-dimensional normed space, with an $\bigO(\sqrt{\d\log \T})$ Euclidean refinement.
The classical $\Omega(\d)$ lower bound in $\ell_\infty^{\d}$ establishes optimality
when $\T\ge \d$.

For our high-dimensional optimization results, we study the regime
$\T\le\d$.  Our reduction for optimization, \cref{thm:movement-reduction}, requires an
absolute movement bound, regardless of the offline comparator's cost.
This distinction matters even for short horizons:
\cref{prop:nondual-movement-lower} gives ratio at least
$\sqrt\T/2$ for Euclidean movement in $\B_{\p}^{\d}$, $1\le\p\le2$, in high dimension $\d \geq \T$, whereas the movement can be of lower order. In fact in \cref{thm:movement-main}, we show for our regimes of interest $\p< \min\{\q, 2\}$ a movement bound that is lower than $\sqrt{\T}$ and it is as small as $\bigotilde{1}$ when $\p=1$ and $\q \geq 2$. %

We note that for the same norm $\norm{\cdot}$ used to charge movement,
if we have $\B_{\norm{\cdot}}=\set{x:\norm{x}\le1}$, then up to universal constants, an
$F(\d,\T)$-competitive nested-chasing algorithm is equivalent to an algorithm
whose movement is $\bigO(F(\d,\T)r)$ whenever
$\K_1\subseteq \xchase_0+r\B_{\norm{\cdot}}$. Equivalently,
one may require this movement bound only until the first time that $\K_t$ is
contained in a translate of $(r/2)\B_{\norm{\cdot}}$
\citep[Claim~1.4]{argue2019nearly}.

However, our case is different.  The requests lie in
$\RR\B_{\p}^{\d}$, but movement is measured in $\ell_{\q}$, which need not
match the containing geometry.  And we seek an absolute movement bound for our optimization algorithms. We start by showing some lower bounds on competitive ratio and movement bounds, that illustrate this phenomenon.

\begingroup

\begin{proposition}[Movement and competitive ratio lower bounds in $\ell_{\q}$]
\label{prop:nondual-movement-lower}\linktoproof{prop:nondual-movement-lower}
Fix $1\le\p\le\q\le\infty$, $\T\ge1$, $\d\ge2\T$,
and $\RR>0$.  Starting from $\xchase_0=0$, an adaptive adversary can
generate nonempty compact convex requests
$\RR\B_{\p}^{\d}=\K_0\supseteq\K_1\supseteq\cdots\supseteq\K_{\T}$
that force every possibly randomized online selector to incur $\ell_{\q}$ movement
\[
  \sum_{t=1}^{\T}\norm{\xchase_t-\xchase_{t-1}}_{\q}
  \ge \frac{\RR}{\sqrt2}
  \T^{1-1/\p+(1/\q-1/2)_{\pospart}},
\]
while the offline optimum in $\ell_{\q}$ is at most
$\RR\T^{1/\q-1/\p}$, so the competitive ratio is at least
\(
      \T^{1-1/\max\{\q, 2\}} = \T^{1-1/\q+(1/\q-1/2)_{\pospart}}.
\)%
\end{proposition}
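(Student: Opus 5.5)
The plan is to build an explicit adaptive adversary from \emph{orthogonal one-dimensional constraints}. Each request fixes the value of one new linear functional, and the adversary picks the sign of that value to be opposite to where the selector currently sits. Every step then forces a fixed amount of movement, and the feasibility of the final point is what calibrates the magnitude. The two regimes $\q\ge2$ and $\q<2$ use the same template with different families of functionals.

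\emph{Case $\q\ge 2$.} Use $\T$ distinct coordinates $i_1,\dots,i_\T$ and set $a=\RR\T^{-1/\p}$. At round $t$ the adversary sees $\xchase_{t-1}$ and picks $\sigma_t\in\{\pm1\}$ with $\sigma_t\,(\xchase_{t-1})_{i_t}\le 0$. It then requests
\[
\K_t=\K_{t-1}\cap\{x: x_{i_t}=\sigma_t a\}.
\]
Nonemptiness holds because $\sum_{s\le t}\sigma_s a\,\ebasis_{i_s}$ lies in every $\K_t$: its $\ell_\p$ norm is $a\,t^{1/\p}\le\RR$. Each $\K_t$ is compact and convex, and the sets are nested by construction. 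Since $\xchase_t\in\K_t$, the coordinate $i_t$ moves by at least $a$, so $\norm{\xchase_t-\xchase_{t-1}}_{\q}\ge a$. The total movement is therefore at least $\T a=\RR\T^{1-1/\p}$. Randomization of the selector does not matter, because the adversary is adaptive and reacts to the realized $\xchase_{t-1}$. For the offline optimum, one jump from $0$ to this point costs $a\T^{1/\q}=\RR\T^{1/\q-1/\p}$.

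\emph{Case $\q<2$.} Replace coordinates by flat orthonormal vectors to gain the factor $\T^{1/\q-1/2}$. Let $m$ be the power of two with $\T\le m<2\T\le\d$, and let $w_1,\dots,w_\T$ be distinct rows of the $m\times m$ Hadamard matrix divided by $\sqrt m$, supported on the first $m$ coordinates. These are orthonormal with entries $\pm m^{-1/2}$, so $\norm{w_t}_{\q^{\conj}}=m^{1/2-1/\q}$. The adversary picks $\sigma_t$ with $\sigma_t\ip{\xchase_{t-1}}{w_t}\le0$ and requests
\[
\K_t=\K_{t-1}\cap\{x:\ip{x}{w_t}=\sigma_t c\}.
\]
By orthonormality, $y_t=c\sum_{s\le t}\sigma_s w_s$ satisfies all constraints up to time $t$. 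Its norm is controlled by
\[
\norm{y_t}_\p\le m^{1/\p-1/2}\norm{y_t}_2\le (2\T)^{1/\p-1/2}c\sqrt\T .
\]
Choosing $c=\RR\,2^{1/2-1/\p}\T^{-1/\p}$ makes $y_t\in\RR\B_\p^\d$, so every request is nonempty. By Hölder, each step satisfies
\[
c\le\abs{\ip{\xchase_t-\xchase_{t-1}}{w_t}}\le\norm{\xchase_t-\xchase_{t-1}}_\q\,m^{1/2-1/\q}.
\]
Since $m\ge\T$ and $1/\q-1/2\ge0$, summing gives total movement at least $\T c\,\T^{1/\q-1/2}$. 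Using $2^{1/2-1/\p}\ge 2^{-1/2}$ for $\p\ge1$, this is at least $\frac{\RR}{\sqrt2}\T^{1-1/\p+1/\q-1/2}$. The offline path jumps once to $y_\T$ and costs at most $m^{1/\q-1/2}c\sqrt\T$, which is of order $\RR\T^{1/\q-1/\p}$. Dividing the movement bound by the offline cost gives the ratio $\T^{1-1/\max\{\q,2\}}$ in both cases.

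\emph{Main obstacle.} The difficulty is the bookkeeping of the powers of $2$ in the offline bound when $\q<2$. The crude estimate $m<2\T$ introduces a factor $2^{(1/\q-1/2)+(1/\p-1/2)}\cdot2^{1/2-1/\p}$, which does not cancel to $1$. To get exactly $\RR\T^{1/\q-1/\p}$, I would first try to pick the offline target point in $\K_\T$ more carefully rather than $y_\T$. One option is to exploit that $y_\T$ restricted to the Hadamard block is flatter than the worst case. Another is to use a block of size exactly $\T$ with a near-orthogonal sign family and absorb the discrepancy into the $1/\sqrt2$ of the movement bound. If that fails, I would accept an absolute constant in the offline bound, which still yields the stated competitive-ratio order.
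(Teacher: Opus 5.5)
Your construction is essentially the paper's proof. For $q\ge2$ you use signed coordinate cuts with $a=RT^{-1/p}$. For $q<2$ you use normalized Hadamard directions on a block of size $m=2^{\lceil\log_2T\rceil}$, the adaptive sign choice, and H\"older's inequality via $\|w_t\|_{q^{*}}=m^{1/2-1/q}$. The differences are cosmetic: you impose hyperplanes $\langle x,w_t\rangle=\sigma_tc$ where the paper uses halfspaces $\sigma_t\langle h_t,x\rangle\ge a$, and you calibrate $c=R\,2^{1/2-1/p}T^{-1/p}$ through $m<2T$, whereas the paper takes $a=Rm^{1/2-1/p}/\sqrt T$.

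The ``main obstacle'' you describe does not exist; it comes from an arithmetic slip. Do not settle for an absolute constant, since the statement asks for the exact bounds and your construction already gives them. With your $c$,
\[
\|y_T\|_q\le m^{1/q-1/2}c\sqrt T\le(2T)^{1/q-1/2}\,R\,2^{1/2-1/p}T^{1/2-1/p}=R\,2^{1/q-1/p}\,T^{1/q-1/p}\le RT^{1/q-1/p},
\]
because $p\le q$ makes $2^{1/q-1/p}\le1$. The factor you wrote, $2^{(1/q-1/2)+(1/p-1/2)}\cdot2^{1/2-1/p}$, has a spurious $2^{1/p-1/2}$.

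Dividing the two final displayed bounds would give a ratio of only $\sqrt{T/2}$. To get exactly $\sqrt T$, compare the total movement lower bound $Tc\,m^{1/q-1/2}$ with the offline cost bound $c\sqrt T\,m^{1/q-1/2}$ before using $m\ge T$. The factor $m^{1/q-1/2}$ then cancels, which is what the paper does when it compares the bounds ``before replacing $n$ by $T$.''
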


\endgroup

For $\p<\q$, the movement lower bound in \cref{prop:nondual-movement-lower} matches the power of $\T$ in our movement upper bounds of \cref{thm:movement-main}, up to logarithmic factors. %

The classical Steiner point has been used as an important selector for chasing convex bodies, and it consists of averages of support points over directions.
Our proposed selector, in \cref{eq:gaussian-center}, instead averages
minimizers of a different regularized objective. The regularization allows movement bounds
adapted to $\RR \B_{\p}^{\d}$, the $\q$-norm and the horizon $\T$, avoiding the $\operatorname{poly}(\d)$ dependence in the classical selectors like the Steiner point when $\T\le \d$.

For $2\le \p<\q$, we note that a simple selector based on minimizing the $\ell_{\p}$-norm (see \eqref{prop:p-energy}) provides  movement bounds matching the lower bound of the previous proposition. The intuition behind it is in connection to why mirror descent works: a uniformly convex potential function trades-off movement with the potential. Similarly for the original algorithm in \citet{lemarechal1995new} for $\p=\q=2$. However, it seems this technique is only able to achieve $\bigO(\sqrt{\T})$ movement at best.
For $\p<2$, our novel center in \cref{eq:gaussian-center} gives the smaller power required by the domain and movement norms, see \cref{prop:low-p-center}.

\section{Setting and main results}

We write $\newtarget{def:standard-basis}{\ebasis_i}$ for the $i$-th standard basis vector.  The ambient space is $\newtarget{def:basic-notation}{\R}^{\newtarget{def:dimension}{\d}}$.
The parameters $\newtarget{def:p}{\p},\newtarget{def:q}{\q}\in[1,\infty]$
specify the domain and regularity norms, respectively; the domain is
$\newtarget{def:domain}{\Q}=\newtarget{def:radius}{\RR}\B_{\p}^{\d}$.
We write $\newtarget{def:objective}{\f}$ for the convex objective and
$\newtarget{def:optimum}{\fstar}=\min_{x\in\Q}\f(x)$ for its constrained minimum.
For $r\in[1,\infty]$, we use the convention $1/\infty=0$ and let $\newtarget{def:conjugate}{r^{\conj}}$
be its conjugate exponent, so that $1/r+1/r^{\conj}=1$. Let the norm
$\newtarget{def:norm}{\norm{x}_r}=(\sum_i\abs{x_i}^r)^{1/r}$ for finite $r$, and
$\norm{x}_\infty=\max_i\abs{x_i}$; $\newtarget{def:inner-product}{\ip{x}{y}}=\sum_i x_i y_i$. Write
\begin{equation*}
  \newtarget{def:ball}{\B_r^{\d}}=\set{x\in\R^{\d}:\norm{x}_r\le1},
\end{equation*}
and $\B_r^d(a,R)=a+R\B_r^d$.  
We use $\newtarget{def:indicator}{\1}_A$ for the indicator of an event $A$ and
$\newtarget{def:distance}{\dist}_r(x,S)=\inf_{y\in S}\norm{x-y}_r$, with $\inf\varnothing=\infty$.
We write $\newtarget{def:probability}{\Prb}$ and $\newtarget{def:expectation}{\E}$ for probability and expectation.
A first-order oracle for a convex $\newtarget{def:lipschitz}{\G}$-Lipschitz function in $\ell_{\q}$ returns
$\f(x)$ and a subgradient $\newtarget{def:subgradient}{\g}\in\partial \f(x)$ with $\norm{\g}_{\q^{\conj}}\le \G$;
thus $\f(y)\ge\f(x)+\ip{\g}{y-x}$ for every $y\in\Q$.
Throughout, $\newtarget{def:asymptotics}{\bigO_{\p,\q}}$ allows constants depending on fixed norm parameters;
$\newtarget{def:soft-asymptotics}{\bigOtilde_{\p,\q}}$ additionally omits factors polynomial in logarithms
of $\d,\T$, and of the inverse failure probability $\newtarget{def:confidence}{\delta}\in(0,1)$ and
target error $\newtarget{def:accuracy}{\eps}>0$.
Similarly for $\Omega(\cdot)$ and $\Theta(\cdot)$ notations.
Unless indicated otherwise, $\newtarget{def:gaussian}{\Z}\sim\cN(0,\Id_{\d})$
is a standard Gaussian vector, where $\Id_{\d}$ is the identity matrix.

Formally, the nested convex body chasing in $\RR \B_{\p}^{\d}$ is the following problem.  For a
horizon $\newtarget{def:horizon}{\T}$, nonempty compact convex requests arrive in a nested sequence
\(
  \RR \B_{\p}^{\d}\supseteq \newtarget{def:requests}{\K_0}\supseteq \K_1\supseteq\cdots\supseteq \K_{\T},
\)
and a selector chooses $\newtarget{def:selector-points}{\z_t}\in \K_t$ for every $0\le t\le \T$.  We measure its
performance by the total $\ell_{\q}$-movement
$\sum_{t=0}^{\T-1}\norm{\z_{t+1}-\z_t}_{\q}$ and seek bounds that hold uniformly
over every such sequence. 
Our main results are summarized in the following two theorems.

\begin{theorem}[Movement over $\RR \B_{\p}^{\d}$]
\label{thm:movement-main}\linktoproof{thm:movement-main}
Fix $1\le \p<\q\le\infty$, let
$\T\ge2$, and fix a confidence parameter $\delta\in(0,1)$.  
There is a deterministic selector which, for every nested sequence in
$\RR \B_{\p}^{\d}$, satisfies
\begin{equation}\label{eq:movement-main}
  \sum_{t=0}^{\T-1}\norm{\z_{t+1}-\z_t}_{\q}
  =
  \bigOtilde_{\p,\q}\!\left(\RR\T^{1-\frac1\p+\left(\frac1\q-\frac12\right)_{\pospart}}\right).
\end{equation}
A Monte Carlo approximation achieves the same movement bound
with high probability, which we can implement in polynomial time for nested
requests generated by \cref{alg:lipschitz-bundle}.
\end{theorem}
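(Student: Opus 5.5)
The plan is to split according to whether $\p\ge2$ or $\p<2$, and in each case use a selector driven by a potential that is monotone along the nested sequence. Then convert monotonicity of the potential into a movement bound.

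\emph{Case $2\le\p<\q$.} Use the $\ell_{\p}$-energy center $\Cp(\K)=\arg\min_{x\in\K}\norm{x}_{\p}^{\p}$, which exists and is unique since $\K$ is compact convex.
1. Because $\z_{t+1}\in\K_{t+1}\subseteq\K_t$, first-order optimality of $\z_t$ on $\K_t$ and $\p$-uniform convexity of $\norm{\cdot}_{\p}^{\p}$ give
\[
\norm{\z_{t+1}}_{\p}^{\p}-\norm{\z_t}_{\p}^{\p}\;\ge\; \D(\z_{t+1},\z_t)\;\ge\; c_{\p}\norm{\z_{t+1}-\z_t}_{\p}^{\p}.
\]
2. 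Telescoping, $\sum_t\norm{\z_{t+1}-\z_t}_{\p}^{\p}\le \RR^{\p}/c_{\p}$.
3. Hölder then gives $\sum_t\norm{\z_{t+1}-\z_t}_{\p}\le \bigO_{\p}(\RR\T^{1-1/\p})$.
4. Finally $\norm{\cdot}_{\q}\le\norm{\cdot}_{\p}$, and $(1/\q-1/2)_{\pospart}=0$ here, so this is exactly \cref{eq:movement-main}.

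\emph{Case $\p<2$.} Here any deterministic uniformly convex potential stalls at $\sqrt\T$, so I would use a Gaussian-smoothed center in the spirit of the Steiner point. For a regularizer $\Phi$ (a scaled $\ell_2^2$ or $\ell_{r}^{r}$ energy) and a smoothing scale $\eta>0$, define the sample optimizer, the sample value and the center by
\[
\usample(\K,\Z)=\arg\min_{x\in\K}\{\Phi(x)-\eta\ip{\Z}{x}\},\qquad
\mval_\K(\Z)=\min_{x\in\K}\{\Phi(x)-\eta\ip{\Z}{x}\},\qquad
\Cgauss(\K)=\E\,\usample(\K,\Z).
\]
1. By the envelope theorem, $\nabla_{\Z}\mval_\K=-\eta\,\usample(\K,\Z)$. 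Gaussian integration by parts then gives $\Cgauss(\K)=-\eta^{-1}\E[\Z\,\mval_\K(\Z)]$. This is the analogue of the support-function formula for the Steiner point.
2. For any $w$ with $\norm{w}_{\q^{\conj}}\le1$,
\[
\ip{w}{\z_t-\z_{t+1}}=\eta^{-1}\E\big[\ip{w}{\Z}\,\Delta_t(\Z)\big],\qquad \Delta_t:=\mval_{\K_{t+1}}-\mval_{\K_t}\ge0 .
\]
The sign of $\Delta_t$ follows from nestedness.
3. Split on the event $\abs{\ip{w}{\Z}}\le\norm{w}_2\sqrt{2\log(\T/\delta')}$ and use a crude bound on $\Delta_t$ for the tail. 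This gives
\[
\norm{\z_t-\z_{t+1}}_{\q}\lesssim \eta^{-1}\sup_{w}\norm{w}_2\,\sqrt{\log \T}\;\E\Delta_t+\text{tail}.
\]
4. Summing telescopes to $\E[\mval_{\K_{\T}}-\mval_{\K_0}]$. This is at most the oscillation of $\Phi$ over $\RR\B_{\p}^{\d}$ plus $\eta\,\E\sup_{x\in\RR\B_{\p}^{\d}}\ip{\Z}{x}$.

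For $\q\ge2$ we have $\norm{w}_2\le\norm{w}_{\q^{\conj}}\le1$, so the bound is dimension-free. For $\q<2$ I would apply the head–tail decomposition of \cref{lem:head-tail}. Every $x\in\RR\B_{\p}^{\d}$ has at most $k$ coordinates above $\RR k^{-1/\p}$. So the head is a $k$-sparse block, where $\norm{w}_2\le k^{1/2-1/\q}$ on its support. The tail lies in an $\ell_\infty$-ball of small radius, and there the $\ell_{\q}$ movement is controlled by the $\ell_2$ movement times a power of the radius. That power is the source of the $(1/\q-1/2)_{\pospart}$ correction.

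The parameters to optimize are $\eta$, the exponent and scale of $\Phi$, and $k\approx\T$. The guiding balance is that $\Phi$ must be strong enough that $\E\sup\ip{\Z}{x}$ (which is about $\RR\sqrt{\log\d}$ for $\p=1$, and a power of the effective sparsity $\T$ for general $\p$) does not dominate. I would take $\Phi$ to be the $\ell_2^2$ energy scaled by $\T$, so that its range over the relevant sparse set is about $\RR^2\T^{1-2/\p}\cdot\T$, and check that the resulting optimum reproduces $\RR\T^{1-1/\p+(1/\q-1/2)_{\pospart}}$.

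\emph{Main obstacle.} The hardest step is making this balance dimension-free when $\T\le\d$. Naively the Gaussian width term brings in $\sqrt\d$ through $\E\sup_{x\in\K}\ip{\Z}{x}$ for $\p>1$, and the tail coordinates must be handled so that they do not reintroduce the $\operatorname{poly}(\d)$ loss of the Steiner point. I would first try to restrict attention, via the head–tail lemma, to the top-$\T$ coordinates of $\Z$, for which $\E\norm{\Z}_{(\T)}\approx\sqrt{\T\log\d}$.

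\emph{Monte Carlo statement.} Replace $\E$ by an average over $\Nsample$ i.i.d.\ samples. Each sample optimizer lies in $\RR\B_{\p}^{\d}$, so vector concentration in $\ell_{\q}$ (Hoeffding-type, with a union bound over the $\T$ steps) gives per-step error $\bigOtilde(\RR\,\Nsample^{-1/2})$. Taking $\Nsample=\operatorname{poly}(\T)$ keeps the total error within the bound. Each sample is a convex program over $\RR\B_{\p}^{\d}$ intersected with the halfspaces produced by \cref{alg:lipschitz-bundle}, hence solvable in polynomial time to sufficient accuracy.
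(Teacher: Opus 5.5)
Your case $2\le p<q$ is exactly the paper's argument: energy center, Bregman telescoping, H\"older, and $\|\cdot\|_q\le\|\cdot\|_p$. The gap is the case $p<2$. It sits precisely at the step you flag as the main obstacle and leave open.

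\textbf{Why your $p<2$ route fails.} Your Gaussian is paired with the whole point $x\in K$, so the budget in your step 4 contains
$\eta\,\mathbb{E}\sup_{x\in RB_p^d}\langle Z,x\rangle=\eta R\,\mathbb{E}\|Z\|_{p^*}\asymp_p \eta R d^{1-1/p}$.
After dividing by $\eta$, your bound is at least of order $Rd^{1-1/p}$ for every choice of $\eta$ and $\Phi$. So it is not dimension-free once $p>1$, even for $q\ge2$. Only the case $p=1$, $q\ge 2$ survives, and there your argument is essentially the Steiner-point one of \cref{sec:steiner-warmup}.

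Restricting to the top-$T$ coordinates of $Z$ does not repair this. When $d\ge T$, the flat vector $Rd^{-1/p}\operatorname{sgn}(Z)$ lies entirely in the tail and still has $\langle Z,x\rangle\asymp Rd^{1-1/p}$. Truncating $Z$ inside the selector instead breaks the link $\nabla_Z m=-\eta\,u$: the chain rule inserts a projection onto the random support, so integration by parts no longer controls $\mathbb{E}\,u$.

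Your two devices for $q<2$ also fail:
\begin{itemize}
\item The averaged head increments $\mathbb{E}[u_{t+1}(Z)-u_t(Z)]$ are not sparse, because the head support varies with $Z$. So $\|w\|_2\le k^{1/2-1/q}$ cannot be used.
\item A small $\ell_\infty$ radius does not let you bound $\ell_q$ by $\ell_2$ when $q<2$. For $\epsilon\mathbf{1}\in\mathbb{R}^d$ the ratio $\|\cdot\|_q/\|\cdot\|_2$ equals $d^{1/q-1/2}$ for every $\epsilon$.
\end{itemize}

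\textbf{The missing idea} is the lift \cref{eq:lift-constraints}. Optimize over pairs $(u,v)$ with $u+v\in K$, $\|u\|_1\le R_1=RT^{1-1/p}$ and $\|v\|_r\le R_r=RT^{1/r-1/p}$, where $r=\min\{q,2\}$; the lift is nonempty by \cref{lem:head-tail}. Perturb only the head:
$F_g(u,v)=\frac12\|v\|_r^2+\frac{\mu}{2}\|u\|_s^2-\eta\langle g,u\rangle$, with $s=1+1/L_d$, $\mu=R_r^2/R_1^2$ and $\eta=R_r^2/(R_1\sqrt{L_d})$.
\begin{itemize}
\item Now $|\langle g,u\rangle|\le R_1\|g\|_\infty$, so the value budget is $O(R_r^2)$ in second moment (\cref{lem:unified-charge-budget}). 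The $\sqrt{\log d}$ is absorbed by the choice of $\eta$.
\item The tail needs no randomness. Bregman telescoping with the $(r-1)$-strong convexity of $\frac12\|v\|_r^2$ in $\ell_r$, plus Cauchy--Schwarz, gives $\sum_t\|\bar v_{t+1}-\bar v_t\|_q\le\sum_t\|\bar v_{t+1}-\bar v_t\|_r\lesssim R_r\sqrt T$.
\item The head gets $\ell_2$ movement $\lesssim R_1\sqrt{L_d\log T}$ from your integration-by-parts argument, now applied to $u$ alone.
\item The head gets $\ell_1$ movement $\lesssim R_1\sqrt{TL_d}$ from the near-$\ell_1$ strong convexity of $\|u\|_s^2$ (\cref{lem:near-l1-geometry}).
\item For $q<2$, the interpolation $\|x\|_q\le\|x\|_1^{2/q-1}\|x\|_2^{2-2/q}$ combined with H\"older over $t$ gives $R_1T^{1/q-1/2}$ up to logarithms. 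This is what replaces the sparsity you tried to use.
\end{itemize}
Since $R_r\sqrt T=R_1T^{1/r-1/2}=RT^{1-1/p+(1/q-1/2)_+}$, both pieces give the theorem.

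\textbf{Two smaller points in your Monte Carlo part.} First, for $q<2$ the space $\ell_q$ only has type $q$, so the averaging error decays like $N^{-1/q^*}$ rather than $N^{-1/2}$. This is harmless with polynomial $N$; cf. \cref{eq:monte-carlo-sample-size}. Second, running the ellipsoid method in polynomial time needs explicit inner balls for the bodies and their lifts. The paper extracts these from the strict margin of the model-minimum certificate in \cref{alg:lipschitz-bundle}.
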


The exact selectors are the Gaussian center in \cref{eq:gaussian-center}
for $\p<\min\{\q,2\}$ and the energy selector in \cref{eq:selector_p_geq_2}
for $2\le\p<\q$.
For $\p<\min\{\q, 2\}$ and bodies generated by \cref{alg:lipschitz-bundle},
\cref{thm:polynomial-implementation} gives polynomial-time feasible
approximations.
The following matches the high-dimensional lower bounds of
\citet{guzman2015open}; see also \citep{guzman2015thesis}.  The match is up to logarithmic factors.  The computational time is polynomial in the real-arithmetic model where arithmetic, comparisons, rational powers, logarithms, and independent scalar standard Gaussian draws have unit cost. The degree can depend on $\p$ and $\q$, cf. \cref{thm:polynomial-implementation}.

\begin{theorem}[Nonsmooth $\ell_{\p}/\ell_{\q}$ optimization]
\label{thm:optimization-main}\linktoproof{thm:optimization-main}
Fix $1\le\p<\min\{\q,2\}$ with $\q\le\infty$.
    Let $\f:\R^{\d} \to\R$ be convex and $\G$-Lipschitz in $\norm{\cdot}_{\q}$, $\T\ge2$, and fix
$\delta\in(0,1)$. There is a polynomial-time algorithm in the real-arithmetic model, using at most $\T$ first-order queries returning a
feasible point $\newtarget{def:output}{\xhat_{\T}}$ such that with probability at least $1-\delta$
\begin{equation}\label{eq:optimization-main}
  \f(\xhat_{\T})-\min_{x\in \RR \B_{\p}^{\d}}\f(x)
  =
  \bigOtilde_{\p,\q}\!\left(
    \frac{\G\RR}{\T^{1/\p-(1/\q-1/2)_{\pospart}}}
  \right).
\end{equation}
\end{theorem}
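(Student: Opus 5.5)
The plan is to combine the reduction from optimization to nested chasing (\cref{thm:movement-reduction}, together with its inexact version \cref{lem:buffered-implementation}) with the movement bound of \cref{thm:movement-main} for the Gaussian center in the regime $\p<\min\{\q,2\}$. We run \cref{alg:lipschitz-bundle} in phases. Phase $k$ starts with gap $\Gamma_k=\U-\lb$ and queries the stable selector $\z_t$ of $\K_t=\{x\in\Q:\model_t(x)\le\hminus\}$.

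\textbf{Per-phase count.} Write $M(\T')$ for the $\ell_{\q}$-movement bound of \cref{thm:movement-main}, namely $\bigOtilde_{\p,\q}(\RR\,\T'^{\,1-1/\p+(1/\q-1/2)_{\pospart}})$ over $\T'$ requests. Consider a phase that contains only null steps. Each such step is a deep cut: the retained halfspace lies at $\ell_{\q}$-distance at least $\Gamma_k/(2\G)$ from $\z_t$. Hence the selector moves at least that much per step, as long as the body stays nonempty.
\begin{itemize}
\item After $N$ null steps we therefore need $N\Gamma_k/(2\G)\le M(N)$.
\item Writing $\alpha=1/\p-(1/\q-1/2)_{\pospart}$, this gives $N^{\alpha}\lesssim \G\RR/\Gamma_k$ up to logarithmic factors.
\item So $N_k=\bigOtilde((\G\RR/\Gamma_k)^{1/\alpha})$ null steps suffice before the phase must end.
\end{itemize}
A phase ends in one of two ways. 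Either a serious step occurs ($\f(\z_t)\le\hplus$), or the sublevel set empties, or the buffered test in Line~\ref{line:buffered-bundle-emptiness} certifies that $\lb$ rose by $\Gamma_k/8$. Each outcome shrinks the gap by a constant factor, so $\Gamma_{k+1}\le c\,\Gamma_k$ with $c<1$.

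\textbf{Summing phases.} Since the gaps decrease geometrically, the sum $\sum_k N_k$ is dominated by the last phase. Hence reaching gap $\eps$ costs $\bigOtilde((\G\RR/\eps)^{1/\alpha})$ queries. Solving for $\eps$ with budget $\T$ gives $\eps=\bigOtilde(\G\RR/\T^{\alpha})$, which is \eqref{eq:optimization-main}.
\begin{itemize}
\item The output $\xhat_{\T}$ is the best queried point $\xbest$. It is feasible because every query lies in $\Q$.
\item The number of phases is $\bigO(\log(\G\RR/\eps))$, and this factor is absorbed into the logarithmic terms.
\end{itemize}

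\textbf{Randomness and computation.} The exact Gaussian center is replaced by its Monte Carlo average, computed in polynomial time by \cref{thm:polynomial-implementation}.
\begin{itemize}
\item I would invoke the high-probability version of \cref{thm:movement-main}, together with the inexact-selector tolerance of \cref{lem:buffered-implementation}.
\item The errors $\Esol$ and $\Ectr$ must be small compared with $\Gamma_k/\G$ times the per-step budget, so that the deep-cut distance degrades only by a constant.
\item A union bound with confidence $\delta/(\#\text{phases}\cdot\T)$ over all phases and steps then costs only logarithmic factors.
\item Emptiness checks are replaced by the buffered lower-bound certificate, which is a convex program over $\RR\B_{\p}^{\d}$ intersected with halfspaces and is solvable in polynomial time.
\end{itemize}

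\textbf{Main obstacle.} The main difficulty is the interface between the approximate selector and the movement argument. \cref{thm:movement-main} bounds the movement of the idealized (or Monte Carlo) center along a fixed nested sequence. However, the sequence here is generated adaptively by cuts at the approximate points themselves. The fix I would try first is to treat the adversary as adaptive, which the deterministic bound already allows. I would then control the Monte Carlo deviation per step by a fresh-sample concentration bound conditioned on the past, so the high-probability movement bound holds for adaptive sequences.
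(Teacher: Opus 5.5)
Your proposal is correct and follows essentially the same route as the paper. The exact Gaussian centers of the actually generated bodies act as reference centers carrying the movement bound of \cref{prop:low-p-center}, \cref{lem:buffered-implementation} absorbs center errors up to $\Gamma/(16G)$, and \cref{thm:polynomial-implementation} supplies such feasible Monte Carlo centers jointly with probability $1-\delta$, with fresh samples conditioned on the past handling adaptivity; then one sets $\varepsilon=\widetilde{O}(GR/T^{\rho})$, and the only detail to add, as the paper does, is a hard cap that stops after $T$ queries and returns $x_{\mathrm{best}}$, so the query count and polynomial running time hold on every run including the failure event.
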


As mentioned in the introduction, the remaining cases, $\p\ge\q$ and $2\le\p<\q$, are already solved
optimally up to logarithmic factors by mirror descent
\citep{nemirovskiYudin1983,nemirovskiiNesterov1985}.
For $\p\ge\q$, the $\q$-radius of $\RR\B_{\p}^{\d}$ is at most
$\RR\d^{1/\q-1/\p}$, giving error
$\bigOtilde_{\p,\q}(\G\RR\d^{1/\q-1/\p}/\T^{1/\max\{2,\q\}})$ if we use mirror descent on the $\ell_{\q}$ geometry.
For $2\le\p<\q$, $\G$-Lipschitzness in $\ell_{\q}$ implies
$\G$-Lipschitzness in $\ell_{\p}$, so mirror descent in the $\ell_{\p}$ geometry
gives error $\bigOtilde_{\p,\q}(\G\RR/\T^{1/\p})$.
The rest of the paper therefore focuses mostly on $\p<\min\{\q, 2\}$,
which in particular implies $\q>1$.

\section{The modular two-level bundle algorithm}
\label{sec:bundle}

After queries $\newtarget{def:query-points}{\xquery_i}$ with subgradients $\g_i$, define the bundle model
\begin{equation}\label{eq:bundle-model}
  \newtarget{def:bundle-model}{\model_t}(x)
  \defi
  \max_{i\le t}\set{\f(\xquery_i)+\ip{\g_i}{x-\xquery_i}}.
\end{equation}
Convexity gives $\model_t\le \f$ on $\Q=\RR \B_{\p}^{\d}$.  Hence
\begin{equation*}
  \newtarget{def:lower-bound}{\lb_t}\le\min_{x\in\Q}\model_t(x)
  \le \fstar,
  \qquad
  \newtarget{def:upper-bound}{\U_t}\defi\min_{i\le t}\f(\xquery_i)\ge \fstar.
\end{equation*}
Here $\lb_t$ is the certified lower bound maintained by \cref{alg:lipschitz-bundle}.
Both bounds are monotonic in the useful directions.  The algorithm retains
$\lb$ until the stopping test in line~\ref{line:buffered-bundle-emptiness} certifies an improvement.

\begin{algorithm}[h!]
\caption{Stable Movement Bundle Method}
\label{alg:lipschitz-bundle}
\begin{algorithmic}[1]
    \REQUIRE $\Q=\RR \B_{\p}^{\d}$, first-order oracle $\oracle$ for $\f$, target $\eps$, selector $\selector$, initial point $\xquery_0\in\Q$.
    \STATE\label{line:buffered-bundle-initialization} $(\f(\xquery_0), \g) \gets \oracle(\xquery_0)$; initialize $\model(x) = \f(\xquery_0) + \ip{\g}{x-\xquery_0}$,
    $\U\gets \f(\xquery_0)$, $\xbest\gets \xquery_0$, and $\lb\gets \f(\xquery_0)-2\G\RR$.
  \WHILE{$\U-\lb>\eps$}
    \STATE\label{line:buffered-bundle-setup-while} $\Gamma\gets \U-\lb$;
      $\hminus\gets\lb+\Gamma/4$;
      $\hplus\gets\lb+3\Gamma/4$; \(\ell_0\gets\ell\).
    \LOOP
      \STATE $\K\gets\set{x\in\Q:\model(x)\le \hminus}$.
      \STATE\label{line:buffered-certified-near-minimum-model} Get $a,b\in \R$, $y\in \Q$ such that $a\leq \min_{x\in \Q}\model(x) \leq \model(y)\leq b$, and $b-a\leq \Gamma/32$.
      \STATE\label{line:buffered-bundle-emptiness} \textbf{if} $a\geq  \ell_0+\Gamma/8$ \textbf{then} $\lb\gets \max\{\ell,a\}$;  \textbf{break} \textbf{endif}.
      \STATE $\z\gets\selector(\K)$ and $(\f(\z),\g) \gets \oracle(\z)$.
      \STATE \textbf{if} $\f(\z)<\U$ \textbf{then} $\xbest\gets \z$ \textbf{endif}; $\U\gets\min\{\U,\f(\z)\}$.
      \STATE $\model(x)\gets
          \max\{\model(x),\f(\z)+\ip{\g}{x-\z}\}$.
      \STATE\label{line:buffered-bundle-serious-update} \textbf{if} $\f(\z)\le \hplus$ \textbf{then} retain $\lb$; \textbf{break} \textbf{endif}.
    \ENDLOOP
  \ENDWHILE
  \STATE \textbf{return} $\xbest$.
\end{algorithmic}
\end{algorithm}

The algorithm proceeds in phases.  During each phase it keeps two fixed
levels between the current lower and upper bounds and queries a selector on a
nested sequence of model sublevel sets.  The phase ends when either bound
improves by a constant fraction of the current gap, after which the levels
and the selector are restarted.

At the start of a phase, freeze
\begin{equation}\label{eq:bundle-levels}
  \newtarget{def:phase-gap}{\Gamma}=\U-\lb,
  \qquad
  \newtarget{def:lower-level}{\hminus}=\lb+\frac14\Gamma,
  \qquad
  \newtarget{def:upper-level}{\hplus}=\lb+\frac34\Gamma.
\end{equation}
The lower level defines the body to be chased; the upper level decides
whether a query is a \serious{} step.  A query at $\z$ is a
\newtarget{def:serious-step}{\emph{\serious{} step}} if $\f(\z)\le\hplus$, and a
\newtarget{def:null-step}{\emph{\nullstep{} step}} otherwise.
Both add the returned affine lower bound to the bundle.  A \serious{} step
ends the phase; a \nullstep{} step continues it unless the stopping test in line~\ref{line:buffered-bundle-emptiness} certifies a lower-bound improvement of at least $\Gamma/8$. The fractions $1/4$ and $3/4$ are chosen
only for simplicity; any fixed constants $0<\beta<\alpha<1$ also work.

In the algorithm, $\newtarget{def:oracle}{\oracle}(x)=(\f(x),\g)$ is a first-order
oracle, $\newtarget{def:selector-rule}{\selector}(\K)\in\K$ is the chosen selector,
and $\newtarget{def:best-query}{\xbest}$ stores a queried point attaining $\U$.

\begin{lemma}[A \nullstep{} step is a deep cut]
\label{lem:null-deep}\linktoproof{lem:null-deep}
During a phase of \cref{alg:lipschitz-bundle}, with gap $\Gamma$ and
levels $\hminus,\hplus$ from \cref{eq:bundle-levels}, suppose the query at $\z$ returns $\g$ and
$\f(\z)>\hplus$.  If $\g\ne0$, then
\begin{equation*}
  \dist_{\q}\bigl(\z,\set{x:\f(\z)+\ip{\g}{x-\z}\le \hminus}\bigr)
  =\frac{\f(\z)-\hminus}{\norm{\g}_{\q^{\conj}}}
  \ge\frac{\Gamma}{2\G}.
\end{equation*}
\end{lemma}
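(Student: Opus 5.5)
The proof has two parts: an exact formula for the distance from a point to a halfspace under a general norm, and a lower bound on the numerator together with an upper bound on the denominator.

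For the distance formula, write $H=\set{x:\ip{\g}{x}\le \hminus-\f(\z)+\ip{\g}{\z}}$. A point $x=\z+v$ lies in $H$ if and only if $\ip{\g}{v}\le \hminus-\f(\z)$. Since $\f(\z)>\hplus>\hminus$, the right-hand side $-c$, with $c\defi\f(\z)-\hminus>0$, is negative, so $\z\notin H$. The problem becomes
\[
\inf\set{\norm{v}_{\q}:\ip{\g}{v}\le -c}.
\]
For the lower bound, Hölder's inequality gives $c\le -\ip{\g}{v}\le \norm{\g}_{\q^{\conj}}\norm{v}_{\q}$ for every feasible $v$. Hence $\norm{v}_{\q}\ge c/\norm{\g}_{\q^{\conj}}$, which is well defined because $\g\neq0$. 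For the matching upper bound, pick $w$ with $\norm{w}_{\q}=1$ and $\ip{\g}{w}=-\norm{\g}_{\q^{\conj}}$; this is possible because the dual norm of $\ell_{\q}$ is $\ell_{\q^{\conj}}$ and the supremum is attained in finite dimensions. Then $v=(c/\norm{\g}_{\q^{\conj}})\,w$ is feasible and achieves the bound. Concretely, for $1<\q<\infty$ one can take $w_i\propto-\operatorname{sign}(\g_i)\abs{\g_i}^{\q^{\conj}-1}$. The endpoint cases $\q=1$ and $\q=\infty$ use a signed basis vector at a maximal coordinate and the sign vector, respectively. Rather than doing these cases separately, I would simply cite the attainment of the dual norm. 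This establishes the equality in the statement.

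For the inequality, use $\f(\z)>\hplus$ and the definitions in \cref{eq:bundle-levels} to get
\[
\f(\z)-\hminus>\hplus-\hminus=\Gamma/2 .
\]
Combined with $\norm{\g}_{\q^{\conj}}\le\G$, which holds by the oracle assumption, this gives the ratio $\ge \Gamma/(2\G)$.

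No step here is a genuine obstacle. The only point needing care is the attainment of the dual norm in the equality, including the endpoint exponents.
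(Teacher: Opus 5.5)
Your proof is correct and follows the same route as the paper. For any $x$ in the halfspace, H\"older's inequality gives $\f(\z)-\hminus\le\ip{\g}{\z-x}\le\norm{\g}_{\q^{\conj}}\norm{\z-x}_{\q}$, and then $\f(\z)-\hminus>\Gamma/2$ together with $\norm{\g}_{\q^{\conj}}\le\G$ gives the bound. Your dual-norm attainment step is a small improvement: the paper's proof only establishes the lower bound and never verifies the equality claimed in the statement.
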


If the next body is nonempty, a \nullstep{} step forces the next center to
move by at least $\Gamma/(2\G)$.  Comparing this charge with the selector's
total movement bound limits the number of \nullstep{} steps in each phase.

\begin{proposition}[Movement-to-optimization reduction]
\label{thm:movement-reduction}\linktoproof{thm:movement-reduction}
Let $\p<\q$ and $0<\newtarget{def:reduction-exponent}{\rho}\le1$, and suppose an algorithm for chasing nested convex sets in
$\RR \B_{\p}^{\d}$, with movement measured in $\ell_{\q}$, satisfies, for every $\newtarget{def:phase-horizon}{\Nphase}$,
\begin{equation} \label{eq:center-movement}
  \sum_{t=0}^{\Nphase-1}\norm{\z_{t+1}-\z_t}_{\q}
  =\bigOtilde\!\left(\RR \Nphase^{1-\rho}\right).
\end{equation}
Then, for every $\eps>0$, \cref{alg:lipschitz-bundle} obtains certified error
at most $\eps$ using the following number of first-order calls
\begin{equation}\label{eq:oracle-compl-from-movement}
    \T = \bigotildel{1+\left(\frac{\G\RR}{\eps}\right)^{1/\rho}}.
\end{equation} 
\end{proposition}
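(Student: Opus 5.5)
The plan is to bound the number of phases and then the number of queries inside each phase. For the number of phases, note that every phase ends in one of two ways, and each decreases the gap $\U-\lb$ by a constant factor.

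\emph{Serious step.} If $\f(\z)\le\hplus$, then $\U$ drops to at most $\lb+3\Gamma/4$, so the new gap is at most $3\Gamma/4$.

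\emph{Break at line~\ref{line:buffered-bundle-emptiness}.} Here $\lb$ rises by at least $\Gamma/8$, so the new gap is at most $7\Gamma/8$. This update is certified because $a\le\min_{\Q}\model\le\fstar$.

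The initial gap is at most $2\G\RR$, since $\f(\xquery_0)-\fstar\le \G\norm{\xquery_0-x^\star}_\q\le \G\cdot 2\RR$ in the regime $\p<\q$ where $\B_\p\subseteq\B_\q$. Hence after $k$ phases the gap is at most $(7/8)^k 2\G\RR$, and there are $\bigO(\log(\G\RR/\eps))$ phases before the gap falls below $\eps$. When the loop stops, the certified error $\f(\xbest)-\fstar\le\U-\lb\le\eps$.

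Next, bound the length of a phase with gap $\Gamma>\eps$. Suppose the phase has made $\Nphase$ consecutive null steps at centers $\z_0,\dots,\z_{\Nphase-1}$ on the nested bodies $\K_0\supseteq\K_1\supseteq\cdots$; these are nested because the model only increases while $\hminus$ is fixed. First I will show that whenever the loop has not broken, $\K_t$ is nonempty. If $\K_t$ were empty, then $\min_\Q\model>\hminus=\ell_0+\Gamma/4$, so $a\ge\min\model-\Gamma/32>\ell_0+\Gamma/8$, and the break would already have fired. So each query point $\z_t\in\K_t$ is well defined. By \cref{lem:null-deep}, $\K_{t+1}$ lies in a halfspace at $\ell_\q$-distance at least $\Gamma/(2\G)$ from $\z_t$, which gives $\norm{\z_{t+1}-\z_t}_\q\ge\Gamma/(2\G)$. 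If $\g=0$ at a null step, then $\model\ge\f(\z)>\hplus$ everywhere, so the next check breaks. Summing and comparing with \eqref{eq:center-movement} gives
\[
\Nphase\,\frac{\Gamma}{2\G}\le C\,\RR\,\Nphase^{1-\rho}\,\mathrm{polylog},
\]
hence $\Nphase\le \bigOtilde((\G\RR/\Gamma)^{1/\rho})$. The main point needing care is the polylog in \eqref{eq:center-movement}, which I read as polylogarithmic in $\Nphase$ and $\d$. Absorbing a $\log^c\Nphase$ factor into the inversion costs only another polylogarithmic factor, by the standard fact that $N^\rho\le A\log^c N$ implies $N=\bigOtilde(A^{1/\rho})$.

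Finally, sum over phases. The phase gaps satisfy $\Gamma>\eps$ and decrease geometrically, so
\[
\sum_{\text{phases}}(1+(\G\RR/\Gamma)^{1/\rho}) = \bigO(\log(\G\RR/\eps))+\bigO((\G\RR/\eps)^{1/\rho}),
\]
because the geometric series is dominated by its last term. Each phase's iteration count is its number of null steps plus one terminating query, so the total is $\bigOtilde(1+(\G\RR/\eps)^{1/\rho})$. Adding the initial query gives \eqref{eq:oracle-compl-from-movement}.
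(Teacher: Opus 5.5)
Your proposal is correct and follows essentially the same route as the paper. Both arguments use the same ingredients:
\begin{itemize}
\item contraction of the certified gap by a factor of $3/4$ on a serious step and $7/8$ on a certified lower-bound break, starting from $\Gamma_0=2\G\RR$;
\item nonemptiness of every queried body whenever the stopping test fails;
\item the deep-cut charge $\Nphase\Gamma/(2\G)$ compared against the assumed movement bound, giving $\Nphase=\bigOtilde((\G\RR/\Gamma)^{1/\rho})$ per phase;
\item a sum over geometrically shrinking phases.
\end{itemize}
Your contrapositive nonemptiness argument (an empty body forces $a>\lb_0+\Gamma/8$) is the paper's certificate inequality $\model(y)<\hminus$ read in the other direction. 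Your explicit geometric summation matches the one the paper carries out in the proof of the approximate-center lemma.
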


We now show that we can achieve the same oracle complexity if we have a good approximation to a sequence with  the movement in \cref{eq:center-movement}. The optimization problem allows for errors that are absorbed without degrading the order of the optimization convergence, which helps providing a polynomial computational complexity. We show such a result in the following lemma.

{
\begin{lemma}[Oracle complexity under approximate centers]\label{lem:buffered-implementation} \linktoproof{lem:buffered-implementation}
    Let $\p<\q$ and $0<\rho\leq 1$. Within every phase of \cref{alg:lipschitz-bundle}, with fixed gap $\Gamma$ and queried bodies $\K_0\supseteq\cdots\supseteq \K_{\Nphase}$, suppose that \eqref{eq:center-movement} holds for reference centers $\z_t\in \K_t$ and that the selector provides $\newtarget{def:approximate-center}{\zhat_t}\in \K_t$ satisfying
    \begin{equation} \label{eq:buffered-center-hypothesis-approx}
        \norm{\zhat_t-\z_t}_{\q}\leq \frac{\Gamma}{16\G},\qquad 0\leq t\leq \Nphase.
    \end{equation}
  Then \cref{alg:lipschitz-bundle} with selector sequence $(\zhat_t)_t$ has oracle complexity \eqref{eq:oracle-compl-from-movement}.
\end{lemma}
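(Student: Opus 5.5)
The argument reruns the proof of \cref{thm:movement-reduction}, applying the deep-cut charge of \cref{lem:null-deep} to the approximate queries $\zhat_t$ and using the triangle inequality to move the charge onto the reference centers $\z_t$. The phase-level accounting stays the same. Each phase ends either with a \serious{} step, so $\U$ drops below $\hplus$ and the gap shrinks to at most $3\Gamma/4$. Or it ends through line~\ref{line:buffered-bundle-emptiness}, so $\lb$ rises by at least $\Gamma/8$ and the gap shrinks to at most $7\Gamma/8$. The initial gap is at most $2\G\RR$. Hence there are $\bigO(\log(\G\RR/\eps))$ phases, and the phase gaps decrease geometrically. It therefore suffices to show that a phase with gap $\Gamma$ makes at most $\bigOtilde(1+(\G\RR/\Gamma)^{1/\rho})$ queries. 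Summing this bound over a geometric sequence of gaps ending near $\eps$ gives \eqref{eq:oracle-compl-from-movement}.

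Fix a phase and consider a \nullstep{} query at $\zhat_t\in\K_t$ after which the phase continues, so $\K_{t+1}$ is next queried. By \cref{lem:null-deep} applied at $\zhat_t$, every point of $\K_{t+1}\subseteq\{x:\f(\zhat_t)+\ip{\g_t}{x-\zhat_t}\le\hminus\}$ is at $\ell_{\q}$-distance at least $\Gamma/(2\G)$ from $\zhat_t$. Both $\z_{t+1}$ and $\zhat_{t+1}$ lie in $\K_{t+1}$, so in particular $\norm{\z_{t+1}-\zhat_t}_{\q}\ge\Gamma/(2\G)$. Using \eqref{eq:buffered-center-hypothesis-approx}, we get
\[
\norm{\z_{t+1}-\z_t}_{\q}\ge\norm{\z_{t+1}-\zhat_t}_{\q}-\norm{\zhat_t-\z_t}_{\q}\ge\frac{\Gamma}{2\G}-\frac{\Gamma}{16\G}=\frac{7\Gamma}{16\G}.
\]
Only the error at time $t$ is needed here; the deep cut is measured from the actual query point, so no error term enters at time $t+1$. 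Thus, if the phase has $\Nphase$ consecutive \nullstep{} steps, each followed by a nonempty body that is queried, the reference centers satisfy $\Nphase\cdot 7\Gamma/(16\G)\le\sum_{t<\Nphase}\norm{\z_{t+1}-\z_t}_{\q}=\bigOtilde(\RR\Nphase^{1-\rho})$. This gives $\Nphase^{\rho}=\bigOtilde(\G\RR/\Gamma)$, that is, $\Nphase=\bigOtilde((\G\RR/\Gamma)^{1/\rho})$. Because of the logarithmic factors hidden in $\bigOtilde$, this inversion is implicit; one takes the smallest $\Nphase$ violating the inequality, which costs only polylogarithmic factors.

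The main obstacle is ensuring that the phase really ends, rather than the algorithm querying a selector on an empty set, since the hypothesis only supplies $\zhat_t\in\K_t$ when $\K_t$ is nonempty. I would argue as follows. If $\K_{t}$ becomes empty, then $\min_\Q\model>\hminus=\ell_0+\Gamma/4$. The bracket from line~\ref{line:buffered-certified-near-minimum-model} then gives $a\ge\min_\Q\model-\Gamma/32>\ell_0+\Gamma/8$, so the stopping test fires before any selector call. More generally, the stopping test fires as soon as $\min_\Q\model\ge\ell_0+5\Gamma/32$. While it has not fired, $\K_t$ contains every point $x$ with $\model(x)\le\ell_0+5\Gamma/32<\hminus$, so $\K_t$ is nonempty and the movement hypothesis applies to the nested bodies actually queried. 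Counting the final \serious{} step or stopping iteration adds one query per phase. Each iteration of line~\ref{line:buffered-certified-near-minimum-model} uses no oracle call, which completes the count.
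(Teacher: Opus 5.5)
Your proof is correct and follows essentially the same route as the paper. The failed stopping test certifies that every queried body is nonempty. The deep cut of \cref{lem:null-deep} is moved onto the reference centers by the triangle inequality, the movement hypothesis bounds the phase length, and a geometric sum over phases gives the total. The only difference is minor: you charge the cut against $z_{t+1}\in K_{t+1}$ directly, so one error term suffices and each null step gives $7\Gamma/(16G)$. The paper charges it against the queried point $\widehat z_{t+1}$, pays two error terms, and gets $3\Gamma/(8G)$.
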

}

In the sequel, we show our movement bounds and efficient implementation in order to obtain an optimization algorithm via \cref{alg:lipschitz-bundle}.

\section{Warm-up: the Steiner point in \texorpdfstring{$\B_1^{\d}$}{the l1 ball}}
\label{sec:steiner-warmup}

For a compact convex set $\K$, its support function is defined as 
$
  \newtarget{def:support-function}{\supporth_{\K}}(\theta):=\sup_{x\in \K}\ip{\theta}{x},
$
for
$
  \theta\in\Sphere^{\d-1}.
$
Here $\newtarget{def:sphere}{\Sphere^{\d-1}}=\{\theta:\norm{\theta}_2=1\}$
and $\newtarget{def:sphere-measure}{\sphmu}$ is its uniform probability measure.  The Steiner point is
\begin{equation}\label{eq:steiner-definition}
  \newtarget{def:steiner}{\st}(\K)
  =\d\int_{\Sphere^{\d-1}}\supporth_{\K}(\theta)\theta\,\mathrm{d}\sphmu(\theta)
  =\E_{\Z\sim\cN(0,\Id_{\d})}
    \Big[\argmax_{x\in \K}\ip{\Z}{x}\Big] \in \K,
\end{equation}
where feasibility holds because this is an average of points in $\K$.  The identity follows from the divergence theorem, see \citep{przeslawskiYost1989}. We also define the \emph{mean-width} of a  compact set $\K$ as the average length of its one-dimensional projections:
\begin{equation*}
  \newtarget{def:mean-width}{\omega}(\K)
  =\int_{\Sphere^{\d-1}}
    \bigl(\supporth_{\K}(\theta)+\supporth_{\K}(-\theta)\bigr)\,\mathrm{d}\sphmu(\theta).
\end{equation*}
The following is an immediate consequence of the result in \citep{bubeck2020nested}.

\begin{fact}[Steiner Euclidean movement]
\label{thm:steiner-b1}
There is $C > 0$ such that for every $\T\ge1$ and every nested sequence of convex bodies
\(
  \B_2^{\d}\supseteq \K_0\supseteq \K_1\supseteq\cdots\supseteq \K_{\T}\ne\varnothing,
\)
\begin{equation*}
  \sum_{t=0}^{\T-1}\norm{\st(\K_{t+1})-\st(\K_t)}_2
  \le
  C \omega(\K_0) \sqrt{\d\log\!\left(\frac{2e\T}{\omega(\K_0)}\right)}.
\end{equation*}
\end{fact}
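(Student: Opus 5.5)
This is essentially a restatement of the horizon-sensitive Steiner-point bound of \citet{bubeck2020nested}, so the plan is to recall its mechanism and check the normalization. The key identity is that the Steiner point is the gradient of the mean width viewed as a function of the body. For nested convex bodies we have $\supporth_{\K_{t+1}}\le \supporth_{\K_t}$ pointwise. Hence $\omega(\K_t)$ is nonincreasing and nonnegative. The quantity $\d\,\omega(\K_t)/2$ therefore serves as a potential that each step must pay for.

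The first step is to write the Steiner displacement in support-function form:
\[
\st(\K_t)-\st(\K_{t+1})=\d\int_{\Sphere^{\d-1}}\bigl(\supporth_{\K_t}-\supporth_{\K_{t+1}}\bigr)(\theta)\,\theta\,\mathrm d\sphmu(\theta).
\]
Here the weight $\Delta_t(\theta):=\supporth_{\K_t}(\theta)-\supporth_{\K_{t+1}}(\theta)\ge0$ is nonnegative.

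The second step pairs this vector with its own unit direction $u$. Cauchy--Schwarz against the nonnegative measure $\Delta_t\,\mathrm d\sphmu$ then gives
\[
\norm{\st(\K_t)-\st(\K_{t+1})}_2\le \d\Bigl(\int\Delta_t\,\mathrm d\sphmu\Bigr)^{1/2}\Bigl(\int\Delta_t\ip{\theta}{u}^2\,\mathrm d\sphmu\Bigr)^{1/2}.
\]

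The third step controls the second factor. Since $\Delta_t\le 2$ (all bodies lie in $\B_2^{\d}$) and $\E\ip{\theta}{u}^2=1/\d$, this factor is only $\sqrt{2/\d}$ in the worst case. A better bound comes from the width decrement: a set of measure $m$ on the sphere carries at most $O(m\log(e/m)/\d)$ of the mass of $\ip{\theta}{u}^2$. This is where the logarithm enters.

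The fourth step sums over $t$. The first factors telescope, since $\sum_t\int\Delta_t\,\mathrm d\sphmu\le\omega(\K_0)$ by symmetry of the sphere and the definition of $\omega$. Applying Cauchy--Schwarz over $t$ with the concentration bound from the third step, and using concavity of $x\log(e/x)$ with the total budget spread over $\T$ steps, yields $C\,\omega(\K_0)\sqrt{\d\log(2e\T/\omega(\K_0))}$.

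The main obstacle is the third step: the sharp tail/concentration estimate, uniform over $u$, for $\int\Delta_t\ip{\theta}{u}^2$ in terms of $\int\Delta_t$. Rather than redo it, I would cite \citet{bubeck2020nested}, which states exactly this bound for nested bodies inside the unit ball with $\omega(\K_0)$ as the scale.

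Two normalization checks are needed to justify the word "immediate". The first is that their Steiner point coincides with \eqref{eq:steiner-definition}, which holds by the divergence-theorem identity. The second is that their $\log(\T/\omega)$ matches the $\log(2e\T/\omega(\K_0))$ used here. Because $\omega(\K_0)\le2$, the argument of our logarithm is at least $e\T\ge e$, so it is bounded below by a constant. Adjusting $C$ absorbs any constant shift inside the logarithm, and this also covers small $\T$.
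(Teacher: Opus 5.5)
Your proposal is correct and takes the same approach as the paper. The paper gives no separate argument; it records this bound as an immediate consequence of the horizon-sensitive Steiner-point analysis of \citet{bubeck2020nested}. Your sketch of that mechanism holds up: the support-function form of the displacement, Cauchy--Schwarz against the nonnegative decrement measure, the cap estimate $O(m\log(e/m)/\d)$ via a bathtub argument, telescoping of the decrements to at most $\omega(\K_0)/2$, and concavity of $x\log(e/x)$ over $\T$ steps. Your normalization remarks about the logarithm are also correct.
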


For an $\ell_{\p}$ ball, duality gives
\(
  \omega(\B_{\p}^{\d})=2\E_{\theta\sim\sphmu}\norm{\theta}_{\p^{\conj}}.
\)
In particular,
\(
  \omega(\B_1^{\d})
  =2\E_{\theta\sim\sphmu}\norm{\theta}_\infty
  =\Theta(\sqrt{\log(2\d)/\d}),
\)
whereas, for every fixed $1<\p<2$,
\(
  \omega(\B_{\p}^{\d})=\Theta_{\p}(\d^{1/\p^{\conj}-1/2}).
\)
Since $\omega(\K_0)\le\omega(\B_{\p}^{\d})$ for $\K_0\subseteq \B_{\p}^{\d}$, the width-sensitive term gives $\bigOtilde(1)$ movement for $\p=1$, which in combination with \cref{thm:movement-reduction} gives a rate that is optimal up to logarithmic factors.

For fixed $\p$ not trivially close to $1$, however, it
gives a polynomial-in-dimension dependence, which is too large and suboptimal when $\T\le \d$, where the optimal movement is
$\Thetatilde_{\p}(\T^{1-1/\p})$; see \cref{thm:movement-main,prop:nondual-movement-lower}
for Euclidean movement.

The expectation identity in \cref{eq:steiner-definition} also gives a
Monte Carlo implementation.  With $\newtarget{def:sample-count}{\Nsample}$ samples per body,
draw independent $\Z_j$, solve the linear optimization problems
$ v_j\in\argmax_{x\in \K}\ip{\Z_j}{x},$ and query the average
\begin{equation*}
  \newtarget{def:empirical-steiner}{\shat_{\Nsample}}(\K):=\frac1\Nsample\sum_{j=1}^{\Nsample}v_j.
\end{equation*}
Every query is in $\K$.  Hilbert-space
concentration (the $\ell_2$ case of the argument in
\cref{prop:monte-carlo}) gives the simultaneous error along a planned path, with probability
at least $1-\delta$, for
sampling tolerance $\newtarget{def:sampling-error}{\errE}>0$:
\begin{equation*}
  \norm{\shat_{\Nsample}(\K_t)-\st(\K_t)}_2\le \errE,
  \qquad 0\le t\le \T.
\end{equation*}
for $0<\errE\le1$, if we use $\Nsample=\bigO(\errE^{-2}\log((\T+1)/\delta))$ samples per body when
$\K_t\subseteq \B_1^{\d}$.

For the bundle application with $\Q=\B_1^{\d}$, $\q=2$, and
$0<\eps<\G$, each body is the unit $\ell_1$-ball intersected with stored
affine halfspaces, so each sampled support point is obtained by a
polynomial-size linear program.  In the exact-subproblem model, the uniform
    sampling tolerance $\errE=\eps/(32\G)$ satisfies the accuracy needed to keep the optimization rate, cf. \cref{lem:buffered-implementation}, since every active phase has gap
$\Gamma>\eps$ and $\Esol=0$.
Combining \cref{thm:steiner-b1,lem:buffered-implementation} gives a planned
budget of $\T=\bigOtilde(\G/\eps)$ first-order queries.
With fresh samples at each query and total failure probability $\delta$,
the sample bound above requires $\Nsample=\bigOtilde((\G/\eps)^2)$
support-point solves per center, hence
$\bigOtilde((\G/\eps)^3)$ sampled linear programs in total; the
model-minimum computations add only $\bigO(\T)$ linear programs.
Here the omitted factors are logarithmic in $\d$, $\G/\eps$, and
$1/\delta$.  Thus, when $\G=1$, tolerance $\errE=\Theta(\eps)$ suffices
and the running time is dominated by that of the $\bigOtilde(\eps^{-3})$ linear-program solves used, for a guarantee with success probability at least $1-\delta$.
\section{The general movement construction}
\label{sec:general-center}

Now we construct a different stable selector in \cref{eq:gaussian-center} in order to obtain near-optimal optimization convergence in all geometries in high dimensions. We first use the threshold decomposition in the following lemma.
Large coordinates form an $\ell_1$ head while the remaining
coordinates form a small tail in the movement norm.

\begin{lemma}[Head-tail containment]
\label{lem:head-tail}\linktoproof{lem:head-tail} Let $\T\ge1$, $1\le \p\le \newtarget{def:head-tail-exponent}{\stail}\le\infty$, $\newtarget{def:threshold}{\tau}=\T^{-1/\p}$, and $x\in \RR \B_{\p}^{\d}$.  Let
\(
  u_i=x_i\1_{\{\abs{x_i}>\RR\tau\}},
\) and 
\(
v=x-u.
\)
One has
\begin{equation*}
  \norm{u}_1\le \newtarget{def:head-radius}{\Rhead}\defi \RR\T^{1-1/\p},
  \qquad
    \norm{v}_{\stail}\le \newtarget{def:tail-radius}{\Rtail_{\stail}}\defi \RR\T^{1/\stail-1/\p} \quad \text{and}\quad  \RR \B_{\p}^{\d}\subseteq \Rhead \B_1^{\d}+\Rtail_{\stail}\B_{\stail}^{\d}.
\end{equation*}
\end{lemma}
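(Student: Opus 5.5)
The plan is to bound the head and the tail separately, each by an elementary comparison of $\abs{x_i}$ with the threshold $\RR\tau$, and then read off the containment from the decomposition $x=u+v$. By homogeneity we may take $\RR=1$: replacing $x$ by $x/\RR$ rescales $u$, $v$ and both radii by the same factor.

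\emph{Head bound.} On the support $S=\set{i:\abs{x_i}>\tau}$ we have $\abs{x_i}/\tau>1$. Since $\p\ge1$, this gives $\abs{x_i}=\tau\,(\abs{x_i}/\tau)\le \tau\,(\abs{x_i}/\tau)^{\p}$. Summing over $S$ yields
\[
  \norm{u}_1\le \tau^{1-\p}\sum_{i\in S}\abs{x_i}^{\p}\le \tau^{1-\p}\norm{x}_{\p}^{\p}\le \T^{-(1-\p)/\p}=\T^{1-1/\p}.
\]
If $\p=\infty$, then $\tau=1$ and $S=\varnothing$, so $u=0$ and the bound holds trivially.

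\emph{Tail bound.} Every coordinate of $v$ satisfies $\abs{v_i}\le\tau$. For finite $\stail\ge\p$, use $\abs{v_i}^{\stail}=\abs{v_i}^{\p}\abs{v_i}^{\stail-\p}\le \tau^{\stail-\p}\abs{v_i}^{\p}$. Summing gives $\norm{v}_{\stail}^{\stail}\le \tau^{\stail-\p}\norm{x}_{\p}^{\p}\le\tau^{\stail-\p}$, and hence
\[
  \norm{v}_{\stail}\le \tau^{1-\p/\stail}=\T^{-1/\p+1/\stail}.
\]
For $\stail=\infty$ the claim is simply $\norm{v}_\infty\le\tau=\T^{-1/\p}$, which holds directly. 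Both steps use only the elementary monotonicity $a^{\alpha}\le a^{\beta}$ for $a\ge1$ and $\alpha\le\beta$ (and the reverse for $a\le1$), so there is no genuine obstacle.

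\emph{Containment.} We have $x=u+v$ with $u\in \Rhead\B_1^{\d}$ and $v\in\Rtail_{\stail}\B_{\stail}^{\d}$, so $x$ lies in the Minkowski sum $\Rhead \B_1^{\d}+\Rtail_{\stail}\B_{\stail}^{\d}$. Undoing the scaling by $\RR$ gives the stated radii. The only point needing care is the degenerate exponents $\p=\infty$ or $\stail=\infty$, handled as above.
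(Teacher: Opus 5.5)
Your proposal is correct and follows essentially the same argument as the paper: the pointwise comparisons $|x_i|\le (R\tau)^{1-p}|x_i|^{p}$ on the head and $|x_i|^{s}\le (R\tau)^{s-p}|x_i|^{p}$ on the tail, summed against $\|x\|_p^p\le R^p$, with the cases $p=\infty$ and $s=\infty$ handled separately. The only cosmetic difference is that you normalize to $R=1$ first, whereas the paper carries $R$ through the computation.
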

Now assume $1\le \p<\min\{2,\q\}$ and set
\begin{equation}\label{eq:unified-exponents}
  \newtarget{def:tail-exponent}{\rmin}=\min\{2,\q\},
  \qquad
  \newtarget{def:log-dimension}{\Ld}=\left\lceil\log(2\d+2)\right\rceil,
  \qquad
  \newtarget{def:head-exponent}{\snear}=1+\frac1{\Ld}.
\end{equation}
The ceiling and the shift inside the logarithm only avoid the degeneracy at
$\d=1$ and make $\snear$ rational.  Thus
$1<\snear\le2$ and $\p<\rmin\le2$.
For a nonempty compact convex set $\K\subseteq \RR \B_{\p}^{\d}$, we use the lift
\begin{equation}\label{eq:lift-constraints}
  \newtarget{def:lift}{\Lift_{\p,\q,\T}}(\K)
  =
  \set{(u,v):
    u+v\in \K,\ \norm{u}_1\le \Rhead,\ \norm{v}_{\rmin}\le \Rtail_{\rmin}}.
\end{equation}

Define
\begin{equation}\label{eq:gaussian-parameters}
  \newtarget{def:mu}{\mu}=\frac{\Rtail_{\rmin}^2}{\Rhead[2]}=\T^{2/\rmin-2},
  \qquad
  \newtarget{def:eta}{\eta}=\frac{\Rtail_{\rmin}^2}{\Rhead\sqrt{\Ld}}
  =\frac{\RR}{\sqrt{\Ld}} \T^{2/\rmin-1/\p-1},
\end{equation}
and, for $\newtarget{def:perturbation}{\gdir}\in\R^{\d}$, let
\begin{equation}\label{eq:unified-sample-program}
    \newtarget{def:sample-optimizer}{(\usample_{\K}(\gdir),\vsample_{\K}(\gdir))} = \argmin_{(u,v) \in \Lift_{\p,\q,\T}(\K)} \setl{ \newtarget{def:sample-objective}{\F_{\gdir}}(u,v)
  :=
  \frac12\norm{v}_{\rmin}^2
  +\frac\mu2\norm{u}_{\snear}^2
    -\eta\ip{\gdir}{u}}
\end{equation}
The lift is nonempty by
\cref{lem:head-tail}, and the objective $\F_{\gdir}$ is strongly convex %
by \cref{fact:unif-cvx-ell-p}.  Define the deterministic selector:
\begin{equation}\label{eq:gaussian-center}
  \newtarget{def:gaussian-center}{\Cgauss}(\K)
  =
  \E_{\Z\sim\cN(0,\Id_{\d})}[\usample_{\K}(\Z)+\vsample_{\K}(\Z)].
\end{equation}
Every sampled point lies in $\K$, so the center lies in $\K$ as well. In the following, we show this selector is stable and later we show that we can approximate with high probability without sacrificing the order of this movement.

\begin{theorem}[Unified Gaussian movement for $\p<2$]
\label{prop:low-p-center}\linktoproof{prop:low-p-center}
    Let $1\le \p<\min\{2,\q\}$, $\rmin=\min\{\q,2\}$ and consider \cref{eq:gaussian-center}.  Every nested sequence in
$\RR \B_{\p}^{\d}$ satisfies
\begin{equation*}
  \sum_{t=0}^{\T-1}
  \norm{\Cgauss(\K_{t+1})-\Cgauss(\K_t)}_{\q}
  =
  \bigotildelp{\p,\q}{
    \RR \T^{1-\frac1\p+\left(\frac1\q-\frac12\right)_{\pospart}}
  }.
\end{equation*}
\end{theorem}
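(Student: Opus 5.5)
The plan is a potential-function argument in the spirit of the Steiner-point analysis of \citet{bubeck2020nested,sellke2020optimal}, with the Gaussian center playing the role of the Steiner point. For a body $\K$ define the value $\Phi(\K)=\E_{\Z}\bigl[\min_{(u,v)\in\Lift_{\p,\q,\T}(\K)}\F_{\Z}(u,v)\bigr]$. Since the lift is monotone in $\K$, the per-sample increments $\Delta_t(g):=\min_{\Lift(\K_{t+1})}\F_g-\min_{\Lift(\K_t)}\F_g$ are nonnegative. By \cref{lem:head-tail}, over the lift $\F_g$ lies between $-\eta\norm{g}_\infty\Rhead$ and $\tfrac12\Rtail_{\rmin}^2+\tfrac\mu2\Rhead[2]$ (up to constants, as $\norm{\cdot}_{\snear}\le\norm{\cdot}_1$). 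Taking expectations and telescoping gives
\begin{equation*}
  \sum_{t<\T}\E\Delta_t(\Z)\le \Phi(\K_\T)-\Phi(\K_0)=\bigO\!\bigl(\Rtail_{\rmin}^2+\mu\Rhead[2]+\eta\Rhead\sqrt{\Ld}\bigr)=\bigO(\Rtail_{\rmin}^2),
\end{equation*}
by the choice \eqref{eq:gaussian-parameters} of $\mu,\eta$. I then split the movement of $\Cgauss$ into a tail part $\E[\vsample_{t+1}-\vsample_t]$ and a head part $\E[\usample_{t+1}-\usample_t]$, and convert increments $\Delta_t$ into movement for each separately.

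\emph{Tail and head via strong convexity.} Fix $g$. Since the minimizer over $\Lift(\K_{t+1})$ is feasible for the larger lift, first-order optimality and uniform convexity of $\tfrac12\norm{\cdot}_{\rmin}^2$ (modulus $\rmin-1\ge \p-1$, or handled with an $\ell_1$-style argument if $\p=1$, using \cref{fact:unif-cvx-ell-p}) and of $\tfrac\mu2\norm{\cdot}_{\snear}^2$ (modulus $\mu(\snear-1)=\mu/\Ld$) should give
\begin{equation*}
  \norm{\vsample_{t+1}(g)-\vsample_t(g)}_{\rmin}^2\lesssim \Delta_t(g),\qquad \norm{\usample_{t+1}(g)-\usample_t(g)}_{1}^2\lesssim \Ld\,\Delta_t(g)/\mu .
\end{equation*}
For the tail, $\rmin\le\q$ gives $\norm{\cdot}_\q\le\norm{\cdot}_{\rmin}$. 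Jensen and Cauchy--Schwarz over $t$ then bound the tail movement by $\sqrt{\T\sum_t\E\Delta_t}\lesssim\sqrt{\T}\,\Rtail_{\rmin}=\RR\T^{1/2+1/\rmin-1/\p}$, which is exactly the target exponent for both $\q\ge2$ and $\q<2$. The same computation bounds the $\ell_1$ head movement by $\sqrt{\Ld\T/\mu}\,\Rtail_{\rmin}=\sqrt{\Ld\T}\,\Rhead$. This is too large by itself, but it is one endpoint of an interpolation.

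\emph{Head via Gaussian integration by parts (main obstacle).} By the envelope theorem, $\nabla_g\min_{\Lift(\K)}\F_g=-\eta\,\usample_{\K}(g)$. Stein's identity therefore gives
\begin{equation*}
  \E[\usample_{t+1}(\Z)-\usample_t(\Z)]=\tfrac1\eta\,\E[\Z\,\Delta_t(\Z)].
\end{equation*}
For a unit $\theta\in\R^\d$, $\ip{\theta}{\Z}$ is standard normal and $\Delta_t\ge0$. An entropy/Young-type inequality, $\E[X\Delta]\le \E\Delta\sqrt{2\log(\norm{\Delta}_{L^2}/\E\Delta)}$ plus a truncation, should give the dimension-free bound $\norm{\E[\Z\Delta_t]}_2\lesssim \E\Delta_t\cdot\mathrm{polylog}$. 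Here $\Delta_t(g)$ is $\bigO(\eta\Rhead)$-Lipschitz in $\ell_\infty$ and has controlled range, so the log factor is $\bigOtilde(1)$; when $\E\Delta_t$ is tiny, the step contributes negligibly and can be absorbed. Making this quantitative is the step I expect to be hardest. Granting it, the summed $\ell_2$ head movement is $\bigOtilde(\Rtail_{\rmin}^2/\eta)=\bigOtilde(\sqrt{\Ld}\,\Rhead)$.

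For $\q\ge2$ this already suffices, since $\norm{\cdot}_\q\le\norm{\cdot}_2$ and $\Rhead=\RR\T^{1-1/\p}$. For $\q<2$, I interpolate per step with $\norm{x}_\q\le\norm{x}_1^{2/\q-1}\norm{x}_2^{2-2/\q}$ and apply Hölder over $t$. This yields head movement
\begin{equation*}
  \bigOtilde\!\bigl((\sqrt{\T}\Rhead)^{2/\q-1}\Rhead^{2-2/\q}\bigr)=\bigOtilde\!\bigl(\RR\T^{1-1/\p+1/\q-1/2}\bigr),
\end{equation*}
matching the claim. Adding the tail and head bounds proves \cref{prop:low-p-center}.
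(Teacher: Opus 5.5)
Your proposal is correct. Apart from one step it is the paper's proof: the same value budget, the same Bregman conversion to tail movement in $\ell_{\rmin}$ and head movement in $\ell_1$, the same Danskin/Stein identity, and the same $\ell_1$--$\ell_2$ interpolation. Two harmless slips: the identity should read $\ubar_{t+1}-\ubar_t=-\frac1\eta\E[\Z\Delta_t(\Z)]$. Also, since $\rmin>\p\ge1$, the tail modulus $\rmin-1$ is always positive, so no separate $\ell_1$ argument is needed when $\p=1$.

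The genuinely different step is the $\ell_2$ head bound.

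\emph{The paper's route.} It never bounds an individual increment. It takes deterministic unit vectors $a_t$ aligned with $\E[\Z\Delta_t(\Z)]$ and uses $\Delta_t\ge0$ to get $\sum_t\norm{\E[\Z\Delta_t(\Z)]}_2\le\E\bigl[\bigl(\sum_t\Delta_t(\Z)\bigr)\max_{t<\T}\lvert\ip{\Z}{a_t}\rvert\bigr]$. It then applies Cauchy--Schwarz with $\E\bigl(\sum_t\Delta_t(\Z)\bigr)^2\lesssim\Rtail_{\rmin}^4$ and $\E\max_{t<\T}\ip{\Z}{a_t}^2\lesssim\log(2\T)$.

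\emph{Your route.} The per-step change-of-measure argument also works and yields the same $\sqrt{\log(2\T)}$. However, your sketch of how to close it is slightly off. The per-step logarithm is not uniformly $\bigOtilde(1)$: it blows up as $\E\Delta_t\to0$. Neither truncation nor the Lipschitz property is what fixes this. A clean version runs as follows.
\begin{enumerate}
\item Let $Q_t\propto\Delta_t\,d\gamma$, with $\gamma$ the standard Gaussian law. The Donsker--Varadhan inequality gives $\norm{\E[\Z\Delta_t(\Z)]}_2\le\E\Delta_t\sqrt{2\,\mathrm{KL}(Q_t\Vert\gamma)}\le\E\Delta_t\sqrt{2\log\bigl(\E\Delta_t^2/(\E\Delta_t)^2\bigr)}$. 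Your stated constant is off by $\sqrt2$; test it on $\Delta=\1_{\{X>a\}}$ as $a\to\infty$.
\item Nonnegativity gives $\norm{\Delta_t}_{L^2}\le\norm{\sum_s\Delta_s}_{L^2}\le A:=C\Rtail_{\rmin}^2$ for every $t$.
\item The map $x\mapsto x\sqrt{\log(A/x)}$ is concave on $(0,A]$. Jensen over $t$ then bounds $\sum_t\E\Delta_t\sqrt{\log(A/\E\Delta_t)}$ by $S\sqrt{\log(A\T/S)}\lesssim\Rtail_{\rmin}^2\sqrt{\log(2\T)}$, where $S=\sum_t\E\Delta_t\lesssim\Rtail_{\rmin}^2$. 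Enlarge $C$ so that $S\le Ae^{-1/2}$, which puts $S$ in the range where this expression is increasing.
\end{enumerate}

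\emph{Comparison.} The paper's trick is shorter and needs only the second moment of the total budget plus a maximum of $\T$ Gaussians. Yours is the Steiner-point-style accounting that charges each step to its own mass $\E\Delta_t$. It gives a dimension-free bound on each increment individually, at the price of the extra Jensen bookkeeping.
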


The objective in \cref{eq:unified-sample-program} handles both
$\q<2$ and $\q\ge2$.  The function $\norm{u}_{\snear}^2/2$ has range at most
$\Rhead[2]/2$ on the head and is $\Omega(1/\Ld)$-strongly convex with respect to
$\norm{\cdot}_1$ by \cref{lem:near-l1-geometry}.  Consequently, we control the total
$\ell_1$ head movement with the
value-budget bound in \cref{lem:unified-charge-budget} in \cref{sec:value_budget}.  Gaussian integration by parts controls the same head
increments in $\ell_2$, while $\norm{v}_{\rmin}^2/2$ controls the tail directly, see \cref{sec:value_budget}.
If $\q\ge2$, monotonicity $\norm{\cdot}_{\q}\le\norm{\cdot}_2$ finishes the head
estimate.  If $\q<2$, one interpolates the $\ell_1$ and $\ell_2$ estimates.
In both cases,
\begin{equation*}
  \Rhead\T^{1/\rmin-1/2}
  =
  \Rtail_{\rmin}\sqrt \T
  =
  \RR\T^{1-\frac1\p+\left(\frac1\q-\frac12\right)_{\pospart}},
\end{equation*}
which gives the common power of $\T$ in \cref{prop:low-p-center}.
Approximating every center to $\ell_{\q}$ error at most $\Ectr$
increases total movement by at most $2\T\Ectr$, see
\cref{prop:monte-carlo}.
A Monte Carlo computation can realize such approximation with high probability, see \cref{thm:polynomial-implementation}.

\subsection{The \texorpdfstring{$\p$-energy}{p-energy} center for
\texorpdfstring{$2\le \p<\q$}{2 less than or equal to p less than q}}

We include this section for completeness since the movement bound was not known, to the best of our knowledge. But we recall that for optimization purposes, the case $\p \geq 2$ can be solved optimally using mirror descent algorithms.

We will use $\newtarget{def:p-energy}{\Phip}(x)=\norm{x}_{\p}^{\p}/\p$.
For every nonempty compact convex set $\K$, define its $\p$-energy center by
\begin{equation} \label{eq:selector_p_geq_2}
  \newtarget{def:energy-center}{\Cp}(\K)
  \in\argmin_{x\in\K}\Phip(x).
\end{equation}

\begin{proposition}[$\p$-energy center movement]%
\label{prop:p-energy}\linktoproof{prop:p-energy}
Let $2\le \p<\q\le\infty$, $\Phip(x)=\norm{x}_{\p}^{\p}/\p$, and let
\(
  \RR \B_{\p}^{\d} \supseteq \K_0\supseteq \K_1\supseteq\cdots\supseteq \K_{\T}
\)
be nonempty compact convex nested sets.  %
Let $(\z_t)_{t=0}^{\T}$ be defined by $\z_t=\Cp(\K_t)$ in \eqref{eq:selector_p_geq_2}
Then
\begin{equation}
    \sum_{t=0}^{\T-1}\norm{\z_{t+1}-\z_t}_{\q} = \bigop{\p}{\RR \T^{1-1/\p}}.
\end{equation}
\end{proposition}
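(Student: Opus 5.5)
The plan is a potential-function argument: the $\p$-energy $\Phip$ is nondecreasing along the sequence of centers, and by uniform convexity each step raises it by at least a constant times $\norm{\z_{t+1}-\z_t}_{\p}^{\p}$. Telescoping bounds the sum of $\p$-th powers of the increments, and H\"older's inequality turns this into the claimed bound on the total movement. Since $\p<\q$, we have $\norm{\cdot}_{\q}\le\norm{\cdot}_{\p}$, so it suffices to bound $\sum_t\norm{\z_{t+1}-\z_t}_{\p}$.

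\emph{Step 1 (monotonicity with a Bregman gain).} Since $\p\ge2$, $\Phip$ is continuously differentiable with $\nabla\Phip(x)_i=\abs{x_i}^{\p-2}x_i$. Because $\z_t$ minimizes $\Phip$ over the convex set $\K_t$, the first-order optimality condition gives $\ip{\nabla\Phip(\z_t)}{y-\z_t}\ge0$ for all $y\in\K_t$. Nestedness gives $\z_{t+1}\in\K_{t+1}\subseteq\K_t$, so we may take $y=\z_{t+1}$. Hence
\[
\Phip(\z_{t+1})-\Phip(\z_t)\;\ge\;\Phip(\z_{t+1})-\Phip(\z_t)-\ip{\nabla\Phip(\z_t)}{\z_{t+1}-\z_t}\;=\;D_{\Phip}(\z_{t+1},\z_t),
\]
where $D_{\Phip}$ is the Bregman divergence of $\Phip$.

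\emph{Step 2 (uniform convexity).} Next I show $D_{\Phip}(y,x)\ge c_{\p}\norm{y-x}_{\p}^{\p}$ for some $c_{\p}>0$ depending only on $\p\ge2$. Both sides are sums over coordinates, so this reduces to the scalar inequality
\[
\tfrac1\p\abs{b}^{\p}-\tfrac1\p\abs{a}^{\p}-\abs{a}^{\p-2}a(b-a)\;\ge\;c_{\p}\abs{b-a}^{\p}.
\]
By $\p$-homogeneity we can normalize $\abs{b-a}=1$. The left-hand side is then a continuous function of $a$ that is strictly positive, by strict convexity of $\abs{\cdot}^{\p}$. It grows like $\tfrac{\p-1}{2}\abs{a}^{\p-2}$ as $\abs{a}\to\infty$, which is bounded below because $\p\ge2$. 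So its infimum is positive. Alternatively, I would cite the standard bound $c_{\p}=2^{2-\p}/\p$, which is the classical $\p$-uniform convexity of $\norm{\cdot}_{\p}^{\p}$. This scalar inequality, with a constant uniform over all $a$, is the only delicate point, and I expect it to be the main obstacle. The compactness-plus-asymptotics argument above is the route I would try first.

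\emph{Step 3 (telescoping and H\"older).} Summing Step 1 over $t$ and using Step 2 gives
\[
c_{\p}\sum_{t=0}^{\T-1}\norm{\z_{t+1}-\z_t}_{\p}^{\p}\;\le\;\Phip(\z_{\T})-\Phip(\z_0)\;\le\;\frac{\RR^{\p}}{\p},
\]
since $\z_{\T}\in\RR\B_{\p}^{\d}$ and $\Phip\ge0$. H\"older's inequality with exponents $\p^{\conj}$ and $\p$ then yields
\[
\sum_{t=0}^{\T-1}\norm{\z_{t+1}-\z_t}_{\q}\;\le\;\sum_{t=0}^{\T-1}\norm{\z_{t+1}-\z_t}_{\p}\;\le\;\T^{1-1/\p}\Big(\sum_{t=0}^{\T-1}\norm{\z_{t+1}-\z_t}_{\p}^{\p}\Big)^{1/\p}\;\le\;(\p c_{\p})^{-1/\p}\,\RR\,\T^{1-1/\p}.
\]
This is the claimed $\bigop{\p}{\RR\T^{1-1/\p}}$ bound.
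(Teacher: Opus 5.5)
Your proposal is correct and follows the paper's proof step by step. Both arguments run the same chain: first-order optimality and nestedness give $\sum_t D_{\Phi_p}(z_{t+1},z_t)\le R^p/p$; the $(2^{2-p},p)$-uniform convexity of $\Phi_p$ turns this into a bound on $\sum_t\|z_{t+1}-z_t\|_p^p$; and H\"older together with $\|\cdot\|_q\le\|\cdot\|_p$ finishes the proof. The only difference is that you also give a self-contained coordinatewise compactness argument for the uniform-convexity constant, where the paper simply cites the classical constant $2^{2-p}/p$; that citation yields the explicit final constant $2^{1-2/p}$.
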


\subsection{Feasible randomized implementation}

Let $1\le\p<\min\{2,\q\}$, $\rmin=\min\{\q,2\}$, $\T\ge1$,
$0<\errE\le\RR$, and $0<\delta<1$.  Let
$\K_0\supseteq\cdots\supseteq\K_{\T}$ be nonempty compact convex subsets of
$\RR\B_{\p}^{\d}$, possibly chosen adaptively.  Given a numerical-solver
tolerance $\newtarget{def:solver-error}{\Esol}\ge0$, choose the number of samples
per body so that
\begin{equation}\label{eq:monte-carlo-sample-size}
  \Nsample\ge C_{\rmin}\left[
    (\RR/\errE)^{\rmin^{\conj}}
    +(\RR/\errE)^2\log\!\left(\frac{\T+1}{\delta}\right)
  \right],
\end{equation}
where $C_{\rmin}>0$ depends only on $\rmin$.  After $\K_t$ is known, draw
$\Nsample$ fresh independent standard Gaussian vectors
$\Z_{t,1},\ldots,\Z_{t,\Nsample}$ and let
\[
  \newtarget{def:center-sample}{\Y_{t,j}}
  =\usample_{\K_t}(\Z_{t,j})+\vsample_{\K_t}(\Z_{t,j}),
  \qquad 1\le j\le\Nsample,
\]
using the definitions in \cref{eq:unified-sample-program}.
Compute feasible numerical approximations and define the randomized selector $\zhat_t$ by
\[
  \begin{aligned}
    &\newtarget{def:numerical-sample}{\Yhat_{t,j}}\in\K_t,
    \qquad \norm{\Yhat_{t,j}-\Y_{t,j}}_{\q}\le\Esol,
    &&1\le j\le\Nsample,\\
    &\zhat_t=\frac1\Nsample\sum_{j=1}^{\Nsample}\Yhat_{t,j},
    &&0\le t\le\T.
  \end{aligned}
\]
Convexity of $\K_t$ ensures $\zhat_t\in\K_t$ for every realization.
\begin{proposition}[Movement of the feasible randomized selector]
\label{prop:monte-carlo}\linktoproof{prop:monte-carlo}
In the setting above, choose
\begin{equation*}
  \errE=\Esol
  =\frac{\RR}{4}
  \T^{-\frac1\p+(\frac1\q-\frac12)_{\pospart}}.
\end{equation*}
Then, with probability at least $1-\delta$,
\begin{equation}\label{eq:monte-carlo-movement}
  \sum_{t=0}^{\T-1}\norm{\zhat_{t+1}-\zhat_t}_{\q}
  =
  \bigOtilde_{\p,\q}\!\left(
    \RR\T^{1-\frac1\p+(\frac1\q-\frac12)_{\pospart}}
  \right).
\end{equation}
\end{proposition}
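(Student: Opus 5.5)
The plan is to compare the randomized path $(\zhat_t)$ with the exact path $(\Cgauss(\K_t))$ through the triangle inequality:
\[
  \sum_{t=0}^{\T-1}\norm{\zhat_{t+1}-\zhat_t}_{\q}
  \le \sum_{t=0}^{\T-1}\norm{\Cgauss(\K_{t+1})-\Cgauss(\K_t)}_{\q}
  + 2\sum_{t=0}^{\T}\norm{\zhat_t-\Cgauss(\K_t)}_{\q}.
\]
\cref{prop:low-p-center} bounds the first sum. It suffices to show that, with probability at least $1-\delta$, every $t$ satisfies $\norm{\zhat_t-\Cgauss(\K_t)}_{\q}\le \Esol+\errE$. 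With the stated choices this contributes $\bigO(\T\cdot\RR\T^{-1/\p+(1/\q-1/2)_{\pospart}})=\bigO(\RR\T^{1-1/\p+(1/\q-1/2)_{\pospart}})$, which is of the same order. I split the error as
\[
  \zhat_t-\Cgauss(\K_t)=\frac1\Nsample\sum_j(\Yhat_{t,j}-\Y_{t,j})+\Big(\frac1\Nsample\sum_j\Y_{t,j}-\E\Y_{t,1}\Big).
\]
The first term has $\ell_{\q}$ norm at most $\Esol$ deterministically.

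For the sampling term, I will use concentration in a space of type $2$ or type $\rmin$. Since $\norm{\cdot}_{\q}\le\norm{\cdot}_{\rmin}$ when $\rmin=\min\{\q,2\}\le\q$, it suffices to bound the $\ell_{\rmin}$ deviation. Each sample $\Y_{t,j}=u+v$ is bounded: $\norm{u}_{\rmin}\le\norm{u}_1\le\Rhead$ and $\norm{v}_{\rmin}\le\Rtail_{\rmin}$. The head bound $\Rhead=\RR\T^{1-1/\p}$ may exceed $\RR$, so plain boundedness is not the right scale. Instead I bound $\norm{\Y}_{\rmin}\le\norm{\Y}_{\p}$ for $\p\le\rmin$ by using $\Y\in\K_t\subseteq\RR\B_{\p}^{\d}$, which gives $\norm{\Y_{t,j}}_{\rmin}\le\RR$ directly. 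The samples are then i.i.d.\ (conditionally on $\K_t$, which is determined before the fresh Gaussians are drawn) and take values in $\RR\B_{\rmin}^{\d}$.

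For $1<\rmin\le2$, the space $\ell_{\rmin}$ is $\rmin$-uniformly smooth with constant depending only on $\rmin$, uniformly in $\d$, because the smoothness of $\norm{\cdot}_{\rmin}^{\rmin}$ holds with a dimension-free constant. This is the main obstacle: a $\log\d$ loss would still be acceptable, but I want to avoid it. A Pinelis-type martingale inequality will give an expectation bound $\E\norm{\bar\Y-\E\Y}_{\rmin}\lesssim_{\rmin}\RR\Nsample^{-1/\rmin^{\conj}}$, which explains the $(\RR/\errE)^{\rmin^{\conj}}$ term in \cref{eq:monte-carlo-sample-size}. McDiarmid's bounded-differences inequality adds a deviation of order $\RR\sqrt{\log((\T+1)/\delta)/\Nsample}$, since each sample changes the norm of the average by at most $2\RR/\Nsample$; this explains the $(\RR/\errE)^2\log$ term. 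If $\q=\rmin$ equals $1$ the argument would need modification, but $\rmin>\p\ge1$, so $\rmin>1$ and the constant $C_{\rmin}$ is finite.

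Finally I take a union bound over the $\T+1$ bodies, each with failure probability $\delta/(\T+1)$. Adaptivity is handled by conditioning on the filtration: the bound holds conditionally for each $t$, because $\K_t$ is measurable before the draw at time $t$. With $\Nsample$ as in \cref{eq:monte-carlo-sample-size}, every $t$ has sampling error at most $\errE$ simultaneously, with probability at least $1-\delta$. Plugging $\errE=\Esol=\frac{\RR}{4}\T^{-1/\p+(1/\q-1/2)_{\pospart}}$ into the triangle-inequality bound gives an extra movement of at most $4(\T+1)\cdot\frac{\RR}{4}\T^{-1/\p+(\cdot)_{\pospart}}$. This is $\bigO(\RR\T^{1-1/\p+(1/\q-1/2)_{\pospart}})$ and combines with \cref{prop:low-p-center} to give \cref{eq:monte-carlo-movement}.
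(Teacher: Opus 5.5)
Your proposal is correct and follows the paper's proof essentially step for step. The shared steps are the triangle inequality against the exact path $\Cgauss(\K_t)$, the deterministic $\Esol$ solver term, the bound $\norm{\Y_{t,j}}_{\rmin}\le\norm{\Y_{t,j}}_{\p}\le\RR$, an expectation bound of order $\RR\Nsample^{-1/\rmin^{\conj}}$ followed by McDiarmid, and a conditional union bound over the $\T+1$ rounds. The only difference is how you get the dimension-free expectation bound: you use $\rmin$-uniform smoothness (martingale type $\rmin$), whereas the paper uses symmetrization and the Rademacher type-$\rmin$ inequality, checked coordinatewise via Khintchine with constant one since $\rmin\le2$, which is a slightly more elementary route to the same estimate.
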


In a bundle phase with gap $\Gamma$ from \cref{eq:bundle-levels}, the
sampling and solver errors must satisfy
\begin{equation}\label{eq:bundle-center-tolerance}
  \errE+\Esol\le\Gamma/(16\G).
\end{equation}
This is the center tolerance required by \cref{lem:buffered-implementation}.
Since every active phase has $\Gamma>\eps$, the tolerances in
\cref{eq:bundle-center-tolerance,eq:sample-objective-tolerance} and the
sample count in \cref{eq:monte-carlo-sample-size} have polynomial dependence
on inverse target accuracy for fixed $\p,\q$.  Fresh samples are drawn after $\K_t$ is known.
Conditionally on the past, the body is fixed; a union bound therefore remains
valid against an adaptive oracle.

For the next theorem, we assume access to a local first-order oracle for $\f$ and a real-arithmetic
model in which arithmetic, comparisons, rational powers, logarithms, and
independent scalar standard Gaussian draws have unit cost.

\begin{theorem}[Polynomial-time implementation]
\label{thm:polynomial-implementation}\linktoproof{thm:polynomial-implementation}
    Fix $1\le\p<\min\{2,\q\}$, $\q \in (1, \infty]$ and $\eps,\delta\in(0,1)$. Let $\f:\R^{\d} \to \R$ be convex and $\G$-Lipschitz in $\norm{\cdot}_{\q}$. There is a randomized implementation of \cref{alg:lipschitz-bundle} using
    $\bigOtilde_{\p,\q}\!\left(1+(\G\RR/\eps)^{1/(1/\p-(1/\q-1/2)_{\pospart})}\right)$ first-order oracle calls and
$\operatorname{poly}_{\p,\q}(\d,1+\G\RR/\eps,\log(1/\delta))$ additional real operations for minimization over $\RR\B_{\p}^{\d}$. All queries and the output $\xbest$ belong to $\RR\B_{\p}^{\d}$.
With probability at least $1-\delta$, it returns a certificate
\[
  \lb\le\fstar\le\f(\xbest)\le\lb+\eps.
\]
\end{theorem}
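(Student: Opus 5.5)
We will assemble \cref{thm:polynomial-implementation} from the modular pieces already stated: the reduction \cref{thm:movement-reduction} with its approximate version \cref{lem:buffered-implementation}, the exact movement bound \cref{prop:low-p-center} for $\Cgauss$, and the sampling bound \cref{prop:monte-carlo}. The exponent to use is $\rho=1/\p-(1/\q-1/2)_{\pospart}\in(0,1]$, since for $\p<\min\{2,\q\}$ we have $1-\rho=1-1/\p+(1/\q-1/2)_{\pospart}$. By \cref{prop:low-p-center}, the reference centers $\z_t=\Cgauss(\K_t)$ satisfy \eqref{eq:center-movement} for every phase length $\Nphase$, where we apply the theorem with $\T$ replaced by the planned horizon. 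Then \cref{lem:buffered-implementation} gives oracle complexity $\bigOtilde(1+(\G\RR/\eps)^{1/\rho})$, provided that in every phase the implemented centers $\zhat_t\in\K_t$ are within $\Gamma/(16\G)$ of $\z_t$ in $\ell_{\q}$.

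\textbf{Step 1 (accuracy and probability).} In each phase we set $\errE=\Esol=\eps/(32\G)$. Since active phases have $\Gamma>\eps$, \eqref{eq:bundle-center-tolerance} holds. We fix an a priori budget $\T_{\max}$ equal to the bound from \cref{lem:buffered-implementation}, and also bound the number of phases by $\bigO(\log(\G\RR/\eps))$, since each phase shrinks the gap by a constant factor starting from $2\G\RR$. The concentration argument of \cref{prop:monte-carlo}, with a union bound over all $\le\T_{\max}+1$ bodies and fresh samples drawn after each $\K_t$ is revealed, makes $\norm{\zhat_t-\z_t}_{\q}\le\errE+\Esol$ hold simultaneously with probability $1-\delta$. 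The sample count \eqref{eq:monte-carlo-sample-size} is then polynomial in $\G\RR/\eps$ and $\log(\T_{\max}/\delta)$. On this event the certificate follows: the loop exits only when $\U-\lb\le\eps$, $\lb\le\fstar$ holds because $a\le\min_\Q\model\le\fstar$, and $\U=\f(\xbest)$. Feasibility of every query is automatic because each $\Yhat_{t,j}\in\K_t\subseteq\Q$.

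\textbf{Step 2 (polynomial subroutines).} Every body has the form $\K=\RR\B_{\p}^{\d}\cap\{x:\f(\xquery_i)+\ip{\g_i}{x-\xquery_i}\le\hminus,\ i\le t\}$, with $t\le\T_{\max}$ constraints. We need three routines.
\begin{enumerate}[leftmargin=2.2em]
\item Line~\ref{line:buffered-certified-near-minimum-model}: an interval $[a,b]$ of length $\Gamma/32$ for $\min_\Q\model$. This is a convex program with an epigraph variable, linear constraints, and one $\ell_{\p}$-ball constraint. We solve it by the ellipsoid or interior-point method with a self-concordant barrier for $\ell_{\p}$ cones (rational $\p$, or an approximation of it), and obtain $a$ from the duality gap.
\item The sample problem \eqref{eq:unified-sample-program}: a strongly convex program over the lift \eqref{eq:lift-constraints}, whose norm constraints and objective are $\ell_1$, $\ell_{\rmin}$, $\ell_{\snear}$ and $\ell_{\p}$ terms, all representable by power cones with rational exponents ($\snear$ is rational by design). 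Interior-point methods reach objective tolerance $\zeta$ in $\operatorname{poly}(\d,\T_{\max},\log(1/\zeta))$ operations.
\item Converting objective accuracy into the $\ell_{\q}$ point accuracy $\Esol$. For the tail we use $\tfrac12\norm{v}_{\rmin}^2$, which is uniformly convex of order $\rmin\le2$ (\cref{fact:unif-cvx-ell-p}). For the head we use the $\Omega(\mu/\Ld)$ strong convexity of $\tfrac{\mu}{2}\norm{u}_{\snear}^2$ in $\ell_1$ (\cref{lem:near-l1-geometry}). Together with $\norm{\cdot}_{\q}\le\norm{\cdot}_1$ and $\norm{\cdot}_{\q}\le\norm{\cdot}_{\rmin}$, this gives $\norm{(\hat u+\hat v)-(u+v)}_{\q}\le\Esol$ once $\zeta$ is a fixed polynomial in $\Esol,\mu,1/\Ld$, and $1/\RR$. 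This is the tolerance \eqref{eq:sample-objective-tolerance}.
\end{enumerate}

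\textbf{Main obstacle: exact feasibility.} The solver may output points slightly infeasible for $u+v\in\K$, but we need $\Yhat_{t,j}\in\K_t$ exactly. Our first attempt is a convex-combination repair. We compute a strictly feasible point $x_c$ of $\K$ with a certified margin, for instance an approximate analytic center, and mix $\hat x$ toward it with a weight polynomially small in the violation. The difficulty is that $\K$ may have tiny inradius, and then no such margin exists. We would resolve this by running the selector on the slightly enlarged level $\hminus+\Gamma/64$ and then shrinking toward a point of $\{\model\le\hminus\}$ (which exists because $b\le\hminus$ unless the phase breaks). The cost of the resulting shift is absorbed into the deep-cut margin of \cref{lem:null-deep}, since the constants in \cref{lem:buffered-implementation} leave room of order $\Gamma/\G$. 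Finally, summing the operation counts over $\bigOtilde(\T_{\max}\cdot\Nsample)$ solves gives $\operatorname{poly}_{\p,\q}(\d,1+\G\RR/\eps,\log(1/\delta))$.
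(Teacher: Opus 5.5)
You have the bookkeeping right: the accuracy allocation $\errE=\Esol=\eps/(32\G)$, the planned budget, the union bound over fresh samples, the certificate, and the objective-to-point conversion all match the paper. The gap is in the computational part, and it sits exactly at your ``main obstacle,'' whose premise is false. The selector is only ever called after the test in line~\ref{line:buffered-bundle-emptiness} has failed. In that case the certificate of line~\ref{line:buffered-certified-near-minimum-model} gives $\model(y)\le b\le a+\Gamma/32<\hminus-3\Gamma/32$. This is a strict model-level margin, not merely $b\le\hminus$.

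Normalize $\G=\RR=1$ and set $\alpha=\Gamma/64$, $c=(1-\alpha)y$. Then $c$ has $\ell_{\p}$-slack $\alpha$. Since $\model$ is $1$-Lipschitz in $\ell_{\q}$, its model slack exceeds $5\alpha$. Because $\norm{h}_{\p},\norm{h}_{\q}\le\sqrt{\d}\norm{h}_2$, this yields $c+\frac{\Gamma}{128\sqrt{\d}}\B_2^{\d}\subseteq\K\subseteq c+2\sqrt{\d}\B_2^{\d}$. Splitting $y$ by \cref{lem:head-tail} and scaling both parts by $1-\alpha$ likewise gives an explicit ball of radius at least $\Gamma/(256\sqrt{\d}\,\T)$ inside $\Lift_{\p,\q,\T}(\K)$.

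This is the missing idea. With these explicit inner and outer balls and the obvious separation oracles, the ellipsoid method solves the sample problems in polynomial time. It also returns exactly feasible points with certified objective accuracy, because the minimizer pulled a small fraction toward the known center carries a whole ball of feasible near-optimal points. Your claim that interior-point methods reach tolerance $\zeta$ in $\operatorname{poly}(\d,\T_{\max},\log(1/\zeta))$ operations silently presupposes this quantified strict feasibility. You never establish it, and you in fact assert it may fail.

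Your workaround does not close the gap. Suppose the shrink is meant to move the output of the selector run at level $\hminus+\Gamma/64$ into $\{\model\le\hminus\}$. Then convexity requires a constant mixing weight: at least $1/7$ with the anchor $y$ above. With an anchor known only to satisfy $\model\le\hminus$, nothing short of moving all the way to it is guaranteed. So the displacement can be of order $\RR$ when the model is nearly flat, far above the $\Gamma/(16\G)$ tolerance of \cref{lem:buffered-implementation}. And since the anchors change with $t$, the shrunk points inherit no movement bound.

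Suppose instead you chase the enlarged level throughout. Then \cref{lem:null-deep} survives with distance $(\Gamma/2-\Gamma/64)/\G$, but you still need the same quantitative margin argument. That includes $\ell_{\p}$-slack, which $y$ on the boundary of $\Q$ lacks. More importantly, you need feasibility in the lift. The conversion $\zeta\ge\D_{\F_{\gdir}}(\widehat w,w)$ behind \eqref{eq:sample-objective-tolerance} uses first-order optimality of $w$ tested against $\widehat w$, so $\widehat w$ must lie in $\Lift_{\p,\q,\T}(\K)$. Repairing only $\widehat u+\widehat v$ in $x$-space afterwards gives no $\Esol$ bound.

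A smaller omission: the objective range and Lipschitz constant of $\F_{\gdir}$ grow with $\norm{\gdir}_\infty$, which is unbounded for Gaussian $\gdir$. For a worst-case polynomial operation count, you should stop early and return $\xbest$ if some $\norm{\Z_j}_\infty$ exceeds $\sqrt{2\log(4\d M/\delta)}$, where $M$ is the total number of draws. Charge $\delta/2$ of the failure probability to this event.
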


The polynomial implementation comes from approximating the solution of the convex program by the ellipsoid algorithm, and the polynomial degree and constants depend only on $\p,\q$. We did not attempt to optimize the polynomial in the implementation and just used the ellipsoid method for simplicity in order to provide a polynomial-time algorithm. The complexity of our ellipsoid method depends critically on inner and outer balls for the feasible domain. Our proof certifies quantifiable bounds on the queried bodies.

\paragraph{Subproblem implementation.}
The body $\K$ is an $\ell_{\p}$ ball intersected with stored affine halfspaces, so
it has an explicit separation oracle.  The lower bound
$\min_\Q\model$ is a convex epigraph problem.  A planned phase horizon is
found by doubling until the phase-length bound in
\cref{eq:phase-length-proof} would be violated; that
contradiction forces the phase to terminate.  The Monte Carlo construction
preceding \cref{prop:monte-carlo} uses the phase tolerance in
\cref{eq:bundle-center-tolerance} and remain feasible by convexity.
\Cref{alg:lipschitz-bundle} uses certified model-minimum bounds and the stopping test in line~\ref{line:buffered-bundle-emptiness}.  \Cref{thm:movement-reduction} analyzes selectors with a movement
bound, and \cref{lem:buffered-implementation} allows approximate centers.
For $\p<\min\{2,\q\}$, the convex subproblems can be solved in polynomial time
by \cref{thm:polynomial-implementation}.
\section{Discussion}

This work provides the first polynomial-time guarantee with nearly optimal oracle complexity for Lipschitz convex optimization in the $\ell_{\p}/\ell_{\q}$ setting when $\p<\min\{\q,2\}$, answering positively the nonsmooth end of a question raised in \citep{guzman2015open}. This result also shows that the minimax rates established in \citep{martinezrubio2026firstorder} can be efficiently attained for $\ell_{\p}/\ell_{\q}$ settings.
Attaining these rates requires genuinely new techniques, particularly departing from the regret minimization approaches that seem to be unable to leverage the additional structure of the feasible domain in nondual optimization settings.

We believe our reduction from chasing convex sets to convex optimization to be of independent interest and we hope for this newly found connection to convex optimization to be fruitful. 
We also initiated the study of movement bounds in the high-dimensional setting where the number of steps $\T$ is no larger than the dimension $\d$, a setting that is scarcely the focus of study in online algorithms.
For fully resolving the open question in \citep{guzman2015open} the oracle complexity of the entire spectrum of H\"older-smooth settings should be considered. This extension will be addressed in a forthcoming work under preparation.

\clearpage
\section*{Acknowledgements}
David Martínez-Rubio was funded by grant La Caixa Junior Leader Fellowship 2025. He thanks OpenAI for free access to their models. Crist\'obal Guzm\'an was partially funded by ANID FONDECYT 1251029 grant, and ANID Basal FB210017 National Center
for Artificial Intelligence CENIA.
This work was elaborated in combination with ChatGPT/Codex versions 5.5 and 5.6 (the newest models when we worked on the main results), that helped in a few places after a highly interactive workflow. Most of the main ideas were from the authors (adapting the bundle method from \citep{lemarechal1995new}, connection to chasing nested convex bodies along with deep cuts, studying low dimensional movement bounds, head and tail decomposition,...), while AI was used to accelerate computations, to quickly close fruitless directions, doing checks and elaboration of some proofs. In particular, an initial version of the Gaussian movement analysis was fully done by AI. The authors checked and rewrote the proofs that were automated, simplifying the presentation and making the main ideas more transparent. In particular, they are solely responsible for the contents of this manuscript. 

{\emergencystretch=1em\printbibliography}

\appendix
\ifusecolttemplate\crefalias{section}{appendix}\fi

\begingroup
\renewcommand{\mu}{\newlink{def:uniform-convexity-modulus}{\oldmu}}
\section{Facts About Strongly/Uniformly Convex Functions}

Here we summarize some classical facts about strongly convex and uniformly convex functions used in the proofs.

\begin{definition}
Let $(E,\|\cdot\|)$ be a normed space, $\K\subseteq E$ a closed convex set, and $\newtarget{def:uniform-convexity-modulus}{\mu}\geq 0$, $2\leq r<\infty$. We say that $\Phi:\K\to\R$ is $(\mu,r)$-uniformly convex if for all $0\leq \lambda\leq 1$ and $x,y\in\K$:
\[ \Phi\big((1-\lambda)x+\lambda y\big) \leq (1-\lambda)\Phi(x)+\lambda \Phi(y) -\frac{\mu}{r}\lambda(1-\lambda)\bigl(\lambda^{r-1}+(1-\lambda)^{r-1}\bigr)\|y-x\|^r. \]
We say that a function is $\mu$-strongly convex if it is $(\mu,2)$-uniformly convex.
\end{definition}

A useful alternative characterization is as follows. The next result is a particular case of \citep[Corollary 3.5.11]{Zalinescu:2002}.
\begin{proposition}  \label{prop:Bregman-LB-unif-cvx}
    Let $\Phi:E\to\R$ be differentiable. Then $\Phi$ is $(\mu,r)$-uniformly convex if and only if for all $y,x\in E$
    \[ D_{\Phi}(y,x)\geq \frac{\mu}{r}\|y-x\|^r,\]
    where $\newtarget{def:bregman}{\D_{\Phi}}(y,x)\defi\Phi(y)-\Phi(x)-\ip{\nabla \Phi(x)}{y-x}$  
    is the Bregman divergence.
\end{proposition}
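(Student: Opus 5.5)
Both directions are direct manipulations of the two inequalities. The forward direction uses the definition to first order in $\lambda$. The converse uses a standard two-point averaging argument at the interpolated point. The only thing to check is that the weight $\lambda(1-\lambda)(\lambda^{r-1}+(1-\lambda)^{r-1})$ in the definition is exactly what the averaging produces, and it is.

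\emph{Uniform convexity implies the Bregman bound.} Fix $x,y\in E$ and set $z_\lambda=(1-\lambda)x+\lambda y=x+\lambda(y-x)$. Subtract $\Phi(x)$ from both sides of the defining inequality and divide by $\lambda\in(0,1]$. This gives
\[
\frac{\Phi(x+\lambda(y-x))-\Phi(x)}{\lambda}\le \Phi(y)-\Phi(x)-\frac{\mu}{r}(1-\lambda)\bigl(\lambda^{r-1}+(1-\lambda)^{r-1}\bigr)\norm{y-x}^r .
\]
Now let $\lambda\downarrow0$. Differentiability of $\Phi$ makes the left-hand side converge to $\ip{\nabla\Phi(x)}{y-x}$. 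Since $r\ge2$, we have $\lambda^{r-1}\to0$ and $(1-\lambda)^{r}\to1$, so the coefficient of $\norm{y-x}^r$ tends to $\mu/r$. Rearranging gives $\D_\Phi(y,x)\ge\frac{\mu}{r}\norm{y-x}^r$.

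\emph{The Bregman bound implies uniform convexity.} Fix $x,y$ and $\lambda\in[0,1]$, and let $z=(1-\lambda)x+\lambda y$. Then $x-z=\lambda(x-y)$ and $y-z=(1-\lambda)(y-x)$. Apply the hypothesis with base point $z$, once toward $x$ and once toward $y$:
\[
\Phi(x)\ge\Phi(z)+\ip{\nabla\Phi(z)}{x-z}+\frac{\mu}{r}\lambda^r\norm{y-x}^r,
\qquad
\Phi(y)\ge\Phi(z)+\ip{\nabla\Phi(z)}{y-z}+\frac{\mu}{r}(1-\lambda)^r\norm{y-x}^r .
\]
Multiply the first inequality by $(1-\lambda)$, the second by $\lambda$, and add. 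The gradient terms cancel because $(1-\lambda)(x-z)+\lambda(y-z)=0$. The remaining constant is
\[
(1-\lambda)\lambda^r+\lambda(1-\lambda)^r=\lambda(1-\lambda)\bigl(\lambda^{r-1}+(1-\lambda)^{r-1}\bigr),
\]
so the result is exactly the defining inequality of $(\mu,r)$-uniform convexity. The endpoint cases $\lambda\in\{0,1\}$ are trivial.

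No step is a real obstacle. The only care needed is in the limit of the forward direction, where differentiability of $\Phi$ (rather than mere directional differentiability) is what identifies the one-sided limit with $\ip{\nabla\Phi(x)}{y-x}$.
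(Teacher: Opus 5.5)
Your proof is correct in both directions. In the forward direction the coefficient $(1-\lambda)\bigl(\lambda^{r-1}+(1-\lambda)^{r-1}\bigr)$ tends to $1$ because $r>1$. In the converse the gradient terms cancel since $(1-\lambda)(x-z)+\lambda(y-z)=0$, and $(1-\lambda)\lambda^r+\lambda(1-\lambda)^r$ reproduces the defining modulus exactly.

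The paper takes a different route in that it gives no argument of its own. It states the proposition as a particular case of Corollary~3.5.11 in Zalinescu's monograph, which treats uniformly convex functions in greater generality (e.g., nonsmooth functions via subgradients). The citation buys that generality. Your two-point argument buys self-containment, and it shows why the paper's unusual weight $\lambda(1-\lambda)\bigl(\lambda^{r-1}+(1-\lambda)^{r-1}\bigr)$ is chosen: it is exactly what averaging the two Bregman inequalities at $z=(1-\lambda)x+\lambda y$ produces. Hence the equivalence holds with the same constant $\mu/r$ in both directions and no loss.

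For the paper's needs your version is fully sufficient. The proposition is only used in the direction uniform convexity $\Rightarrow$ Bregman lower bound, and only for differentiable potentials such as $\frac1p\|x\|_p^p$ with $p\ge2$ and $\frac12\|x\|_s^2$ with $s>1$. For the forward limit, G\^ateaux differentiability would already suffice.
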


We summarize the uniform convexity properties of $\ell_{\p}^{\d}=(\R^{\d},\norm{\cdot}_{\p})$ spaces, for $1\leq \p<\infty$. These results are classical in Analysis \citep{clarkson1936uniformly,ballCarlenLieb1994}.
\begin{fact} \label{fact:unif-cvx-ell-p}
  \begin{enumerate}
      \item If $1<\p\leq2$, then the function $\Phip(x)\defi \frac12\norm{x}_{\p}^2$ is $(\p-1)$-strongly convex (see e.g.~\citep [Example 5.28]{Beck:2017}).
      \item If $2<\p<+\infty$, then the function $\Phip(x)=\frac1\p\norm{x}_{\p}^{\p}$ is $(2^{2-\p},\p)$-uniformly convex (see e.g., \citep[Lemma 4.2.3]{Nesterov:2018lectures}).
  \end{enumerate}  
\end{fact}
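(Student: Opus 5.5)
Both items are classical and are cited as such. If a self-contained argument is wanted, I would verify each through the Bregman characterization in \cref{prop:Bregman-LB-unif-cvx}, reducing everything to an elementary inequality.

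\textbf{Item 2} ($2<\p<\infty$). The function $\Phip(x)=\frac1\p\sum_i\abs{x_i}^{\p}$ is separable and differentiable, so it suffices to prove the scalar bound
\[
  D_\varphi(b,a)\ge \frac{2^{2-\p}}{\p}\abs{b-a}^{\p},
  \qquad \varphi(t)=\abs{t}^{\p}/\p,
\]
and then sum it over coordinates. I would write
\[
  D_\varphi(b,a)=\int_0^1\bigl(\varphi'(a+s(b-a))-\varphi'(a)\bigr)(b-a)\,ds .
\]
Applying the monotonicity inequality
\[
  \bigl(\abs{x}^{\p-2}x-\abs{y}^{\p-2}y\bigr)(x-y)\ge 2^{2-\p}\abs{x-y}^{\p}
\]
with $x=a+s(b-a)$ and $y=a$ (dividing by $s$) bounds the integrand below by $2^{2-\p}s^{\p-1}\abs{b-a}^{\p}$. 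Integrating gives exactly $\frac{2^{2-\p}}{\p}\abs{b-a}^{\p}$. The monotonicity inequality splits into two cases; take $x>y$ throughout.
\begin{itemize}
  \item \emph{Opposite signs.} The left side equals $(\abs{x}^{\p-1}+\abs{y}^{\p-1})(\abs{x}+\abs{y})$. Convexity of $t\mapsto t^{\p-1}$ gives $\abs{x}^{\p-1}+\abs{y}^{\p-1}\ge 2^{2-\p}(\abs{x}+\abs{y})^{\p-1}$, which suffices.
  \item \emph{Same sign.} By symmetry take $0\le y<x$. Superadditivity of $t^{\p-1}$ gives $x^{\p-1}-y^{\p-1}\ge(x-y)^{\p-1}$, which is even stronger than needed since $2^{2-\p}\le1$.
\end{itemize}

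\textbf{Item 1} ($1<\p\le2$). I would use the Hessian criterion. For $\Phi(x)=\frac12\norm{x}_{\p}^2$ at points with all coordinates nonzero, set $w_i=\abs{x_i}^{\p-1}\operatorname{sign}(x_i)$. A direct computation gives
\[
  \nabla^2\Phi(x)=(\p-1)\norm{x}_{\p}^{2-\p}\operatorname{diag}\bigl(\abs{x_i}^{\p-2}\bigr)+(2-\p)\norm{x}_{\p}^{2-2\p}ww^\top .
\]
The rank-one term is positive semidefinite, so it can be dropped. Hölder's inequality with exponents $2/\p$ and $2/(2-\p)$ gives
\[
  \norm{h}_{\p}^2\le\Bigl(\sum_i\abs{x_i}^{\p-2}h_i^2\Bigr)\norm{x}_{\p}^{2-\p},
\]
hence $\ip{\nabla^2\Phi(x)h}{h}\ge(\p-1)\norm{h}_{\p}^2$. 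Integrating this twice along the segment from $x$ to $y$ gives
\[
  D_\Phi(y,x)\ge\frac{\p-1}{2}\norm{y-x}_{\p}^2 ,
\]
and \cref{prop:Bregman-LB-unif-cvx} then yields $(\p-1)$-strong convexity.

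\textbf{Main obstacle.} In Item 1 the Hessian is singular where some coordinate $x_i=0$, since $\abs{x_i}^{\p-2}$ blows up. This only strengthens the lower bound, but the integration argument needs a smooth segment. I would first prove the Bregman inequality for $x,y$ in general position, where the segment meets each coordinate hyperplane at finitely many points. On the complement $\Phi$ is $C^2$, and $\Phi$ is $C^1$ everywhere, so the second-order Taylor integral is still valid. The inequality then extends to all $x,y$ by continuity of $\Phi$ and $\nabla\Phi$; the point $x=0$ is a trivial separate check.

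Item 2 has no such issue, because $\varphi$ is $C^1$ with monotone derivative.
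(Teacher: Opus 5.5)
Your proof is correct. Since the paper gives no argument beyond citing Beck (Example~5.28) and Nesterov (Lemma~4.2.3), yours is a genuinely different, self-contained route.

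\textbf{Item~2.} Nesterov's lemma is stated for the $p$-th power of a Euclidean norm, $\frac1p\|x\|_2^p$, not for the separable function $\frac1p\|x\|_p^p$. Your reduction to the scalar function $|t|^p/p$, where the two coincide, followed by summing over coordinates (valid because $\|y-x\|_p^p$ is separable too), is exactly the bridge the paper leaves implicit. Your two-case proof of the scalar monotonicity inequality (Jensen for opposite signs, superadditivity of $t^{p-1}$ for equal signs) removes the need for the citation altogether.

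\textbf{Item~1.} The cited textbook argument is dual. It shows that $\frac12\|\cdot\|_{p^*}^2$ with $p^*\ge 2$ is $(p^*-1)$-smooth with respect to $\|\cdot\|_{p^*}$. The conjugate correspondence between smoothness and strong convexity then gives $\frac{1}{p^*-1}=(p-1)$-strong convexity of its conjugate $\frac12\|\cdot\|_p^2$. That route works with a function that is $C^2$ away from the origin, so it never meets a singular set, but it depends on the correspondence theorem.

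You stay in the primal. There the Hessian lower bound via H\"older with exponents $2/p$ and $2/(2-p)$ is a one-line computation. The price is that $|x_i|^{p-2}$ blows up on the coordinate hyperplanes, and your repair handles this correctly. In general position, $g(s)=\Phi(x+s(y-x))$ has continuous $g'$ and satisfies $g''\ge (p-1)\|y-x\|_p^2$ away from finitely many points. Applying this piece by piece and using continuity of $g'$ at the break points gives $g'(s)-g'(0)\ge (p-1)\|y-x\|_p^2\, s$. Integrating once more yields the Bregman bound. Continuity of $\nabla\Phi$, which holds for $1<p\le 2$ including at the origin, then extends the bound to all pairs by density.

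Overall, your argument is more elementary. The duality argument is shorter once the correspondence theorem is available.
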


While the extreme cases $\p=1,\infty$ are not included above, the case $\p=1$ is known to have strongly convex functions with moderate growth. We provide the details below for completeness.

\begin{fact}[Near-strong convexity of $\ell_1$-norm]
\label{lem:near-l1-geometry}
Let $\snear$ and $\Ld$ be as in \cref{eq:unified-exponents}.  Then
$x\mapsto \frac12\norm{x}_{\snear}^2$ is $e^{-2}/\Ld$-strongly convex with respect
to $\norm{\cdot}_1$; more precisely,
\begin{equation*}
  \D_{\frac12\norm{\cdot}_{\snear}^2}(u',u)
  \ge
  \frac{\snear-1}{2}\norm{u'-u}_{\snear}^2
  \ge
  \frac{e^{-2}}{2\Ld}\norm{u'-u}_1^2.
\end{equation*}
Moreover, $0\le\frac12\norm{u}_{\snear}^2\le \Rhead[2]/2$ whenever
$\norm{u}_1\le \Rhead$.
\end{fact}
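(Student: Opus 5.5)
The plan is to prove the three claims in order: the $\ell_{\snear}$ Bregman bound, the comparison with $\ell_1$, and the range bound. Write $\Phi(x)=\frac12\norm{x}_{\snear}^2$. By \cref{eq:unified-exponents}, $\snear=1+1/\Ld\in(1,2]$. Hence $\Phi$ is continuously differentiable on $\R^{\d}$, which is the case $1<\snear\le 2$ of the $\ell_{\snear}$ geometry.

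\emph{Step 1 (Bregman bound in $\ell_{\snear}$).} Apply item 1 of \cref{fact:unif-cvx-ell-p} with exponent $\snear$. It says $\Phi$ is $(\snear-1)$-strongly convex with respect to $\norm{\cdot}_{\snear}$, that is, $(\snear-1,2)$-uniformly convex. The characterization in \cref{prop:Bregman-LB-unif-cvx} then gives
\[
\D_{\Phi}(u',u)\ge \frac{\snear-1}{2}\norm{u'-u}_{\snear}^2 \quad\text{for all } u,u'.
\]
This is the first inequality.

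\emph{Step 2 (comparison with $\ell_1$).} This is the only step that needs a computation, and the point is that the dimension factor stays bounded. Hölder's inequality gives, for $w\in\R^{\d}$,
\[
\norm{w}_1\le \d^{1-1/\snear}\norm{w}_{\snear}.
\]
Here $1-1/\snear=(\snear-1)/\snear=1/(\Ld+1)$. Since $\Ld\ge\log(2\d+2)\ge\log \d$, we get
\[
\d^{1/(\Ld+1)}=\exp\bigl(\log \d/(\Ld+1)\bigr)\le e.
\]
Therefore $\norm{w}_1^2\le e^2\norm{w}_{\snear}^2$. Using $\snear-1=1/\Ld$, this yields
\[
\frac{\snear-1}{2}\norm{w}_{\snear}^2\ge \frac{e^{-2}}{2\Ld}\norm{w}_1^2,
\]
which is the second inequality. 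Strong convexity with modulus $e^{-2}/\Ld$ with respect to $\norm{\cdot}_1$ then follows from \cref{prop:Bregman-LB-unif-cvx} in the reverse direction.

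\emph{Step 3 (range bound).} Since $\snear\ge1$, monotonicity of $\ell_r$ norms in $r$ gives $\norm{u}_{\snear}\le\norm{u}_1\le\Rhead$. Hence $0\le\frac12\norm{u}_{\snear}^2\le\Rhead[2]/2$ whenever $\norm{u}_1\le\Rhead$.

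I expect no serious obstacle. The only care needed is choosing $\Ld$ so that the Hölder factor $\d^{1-1/\snear}$ is bounded by an absolute constant, which the definition of $\Ld$ ensures.
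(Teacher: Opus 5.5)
Your proposal is correct and follows essentially the same route as the paper: $(s-1)$-strong convexity of $\frac12\|\cdot\|_s^2$ in $\ell_s$ from the classical uniform-convexity fact, then the H\"older comparison $\|w\|_1\le d^{1-1/s}\|w\|_s$ with $d^{2(1-1/s)}=d^{2/(L_d+1)}\le e^2$, which the paper writes as $d^{2/s-2}\ge e^{-2}$. The range bound likewise follows from $\|u\|_s\le\|u\|_1\le R_1$.
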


\begin{proof}
By \cref{fact:unif-cvx-ell-p}, for any $1<\snear\le2$, the squared $\ell_{\snear}$ norm is $(\snear-1)$-strongly convex with
respect to $\norm{\cdot}_{\snear}$.  Since $\snear-1=1/\Ld$ and
\begin{equation*}
  \d^{2/\snear-2}
  =
  \exp\left(-\frac{2(\snear-1)}\snear\log \d\right)
  \ge e^{-2},
\end{equation*}
we have $(\snear-1)\d^{2/\snear-2}\ge e^{-2}/\Ld$.  Combining this with the norm
comparison $\norm{x}_{\snear}\ge \d^{1/\snear-1}\norm{x}_1$ proves the second
inequality.  The range bound follows from $\norm{u}_{\snear}\le\norm{u}_1\le \Rhead$.
\end{proof}

\endgroup

\section{Movement lower bounds}

\begingroup

\begin{proof}\linkofproof{prop:nondual-movement-lower}
We use signed coordinate cuts for $\q\ge2$ and dense orthogonal directions
for $\q<2$.  In each case the final request contains an explicit point $y$.
The offline player may move from $0$ to $y$ on the first request and stay
there, so its cost is at most $\norm{y}_{\q}$.

\paragraph{The case $\q\ge2$.}
Let $a=\RR\T^{-1/\p}$.  After observing $\xchase_{t-1}$, choose
$\sigma_t\in\{-1,1\}$ so that $\sigma_t\xchase_{t-1,t}\le0$, and reveal
\[
  \K_t=\RR\B_{\p}^{\d}\cap
    \bigcap_{s=1}^{t}\{x:\sigma_s x_s\ge a\}.
\]
The point $y=a\sum_{s=1}^{\T}\sigma_s\ebasis_s$ belongs to every request
and has $\norm{y}_{\p}=\RR$.  Each transition satisfies
\[
  \norm{\xchase_t-\xchase_{t-1}}_{\q}
  \ge \sigma_t(\xchase_{t,t}-\xchase_{t-1,t})\ge a.
\]
Thus the online movement is at least $a\T=\RR\T^{1-1/\p}$.
Every point of $\K_{\T}$ has its first $\T$ coordinates of absolute value
at least $a$, so the offline optimum is exactly
$\norm{y}_{\q}=a\T^{1/\q}=\RR\T^{1/\q-1/\p}$, with $\T^{1/\infty}=1$.
The competitive ratio is therefore at least $\T^{1-1/\q}$.

\paragraph{The case $\p\le\q<2$.}
Let $n=2^{\lceil\log_2\T\rceil}$, so $\T\le n<2\T$ and $n\le\d$.
We use normalized Hadamard vectors $h_1,\ldots,h_{\T}\in\R^{\d}$:
they are orthonormal, supported on the first $n$ coordinates, and each
of those coordinates equals $\pm n^{-1/2}$.
Let $a=\RR n^{1/2-1/\p}/\sqrt\T$. %
After observing
$\xchase_{t-1}$, choose $\sigma_t\in\{-1,1\}$ with
$\sigma_t\ip{h_t}{\xchase_{t-1}}\le0$, and reveal
\[
  \K_t=\RR\B_{\p}^{\d}\cap
    \bigcap_{s=1}^{t}\{x:\sigma_s\ip{h_s}{x}\ge a\}.
\]
For $y=a\sum_{s=1}^{\T}\sigma_s h_s$, orthonormality gives
$\sigma_s\ip{h_s}{y}=a$ and $\norm{y}_2=a\sqrt\T$.
Since $y$ is supported on $n$ coordinates and $\p\le2$, 
\[
  \norm{y}_{\p}\le n^{1/\p-1/2}\norm{y}_2 =\RR.
\]
Thus every request is nonempty.  Moreover,
$\norm{h_t}_{\q^{\conj}}=n^{1/2-1/\q}$, also when $\q=1$.
H\"older's inequality and the choice of $\sigma_t$ imply
\[
  a\le\sigma_t\ip{h_t}{\xchase_t-\xchase_{t-1}}
  \le n^{1/2-1/\q}\norm{\xchase_t-\xchase_{t-1}}_{\q}.
\]
Summing the transitions yields
\[
  \sum_{t=1}^{\T}\norm{\xchase_t-\xchase_{t-1}}_{\q}
  \ge %
  \RR\sqrt{\T}\,n^{1/\q-1/\p}
  \ge \frac{\RR}{\sqrt2}\T^{1/2+1/\q-1/\p}.
\]
On the other hand, the offline cost incurred by $y$ is at most
\[
  \norm{y}_{\q}%
  \le\RR n^{1/\q-1/\p}\le\RR\T^{1/\q-1/\p}.
\]
Comparing the two bounds \emph{before} replacing $n$ by $\T$ yields competitive ratio at least $\sqrt\T$.
Both constructions depend only on previously observed points, so all
inequalities hold for every realization of a randomized selector.
\end{proof}

\endgroup

\section{Proof of the movement-to-optimization reduction}

\begin{proof}\linkofproof{lem:null-deep}
The halfspace is $\set{x:\f(\z)+\ip{\g}{x-\z}\le \hminus}$. If $\g=0$, then $\f(\z)>\hminus$ makes the halfspace empty, so the claim is immediate. Suppose $\g\ne0$. %
Let $x\in \set{w:\f(\z)+\ip{\g}{w-\z}\le \hminus}$. Note that
    \[\frac{\Gamma}{2}=\hplus-\hminus \circled{1}[<]\f(\z)-\hminus \circled{2}[\le] \ip{\g}{\z-x}\circled{3}[\le] \norm{\g}_{\q^{\conj}}\norm{\z-x}_{\q}\leq \G\norm{\z-x},\]
    where the first equality uses the phase levels in \cref{eq:bundle-levels}, $\circled{1}$ uses the fact that $\f(\z)>\hplus$, $\circled{2}$ uses that $x$ lies on the halfspace defined above, and $\circled{3}$ uses Hölder's inequality. Since $x$ is arbitrary, we have proved the result.
\end{proof}

\begin{proof}\linkofproof{thm:movement-reduction}
Fix a target accuracy $\eps>0$.  Consider a phase whose certified gap is
$\Gamma$.  If $\Nphase$ consecutive \nullstep{}
cuts leave the bodies nonempty, \cref{lem:null-deep} and the assumed
upper bound on movement give
\begin{equation}\label{eq:phase-length-proof}
  \Nphase\frac{\Gamma}{2\G}
  \le
  \sum_{t=0}^{\Nphase-1}\norm{\z_{t+1}-\z_t}_{\q}
    = \bigotilde{\RR \Nphase^{1-\rho}} \implies \Nphase = \bigotildel{\left(\frac{\G\RR}{\Gamma}\right)^{1/\rho}}.
\end{equation}
The terminal query, either a \serious{} step or a \nullstep{} step followed by
a successful stopping test in line~\ref{line:buffered-bundle-emptiness}, adds at most one call; the test itself
needs no first-order call.
If the test fails, its certificate gives
$\model(y)\le a+\Gamma/32<\lb_0+5\Gamma/32<\hminus$,
so every queried body is nonempty.
It remains to verify that the certified gap contracts after the phase.  Let
$\lb$ and $\U$ denote the bounds at the start of the phase.  If a \serious{}
step terminates the phase, then the updated bounds satisfy
$\U_{\mathrm{new}}\le \hplus=\lb+3\Gamma/4$ and
$\lb_{\mathrm{new}}\ge\lb$, so
$\U_{\mathrm{new}}-\lb_{\mathrm{new}}\le3\Gamma/4$.
If instead the stopping test in line~\ref{line:buffered-bundle-emptiness} succeeds, then
$\lb_{\mathrm{new}}=\max\{\lb,a\}\ge\lb+\Gamma/8$, while
$\U=\lb+\Gamma$, and hence, since $\U_{\mathrm{new}}\le\U$,
$\U_{\mathrm{new}}-\lb_{\mathrm{new}}\le7\Gamma/8$.
The update is certified because $a\le\min_{\Q}\model\le\fstar$.
Thus every phase contracts the certified gap by a factor of at most $7/8$.
A phase with gap $\Gamma$ therefore uses
$\bigOtilde((\G\RR/\Gamma)^{1/\rho})$ calls, including its terminal query.
The initialization $\lb=\f(\xquery_0)-2\G\RR$ is certified because
$\f$ is $\G$-Lipschitz in $\ell_{\q}$ and $\Q$ has $\ell_{\q}$ diameter
at most $2\RR$ when $\p<\q$.  Thus $\Gamma_0=2\G\RR$.
If $\eps\ge2\G\RR$, the initial certificate already gives the result.  Otherwise,
run \cref{alg:lipschitz-bundle} with target $\eps$.  Every active phase has
$\Gamma>\eps$, and there are at most $\bigO(\log(2\G\RR/\eps))$ phases.  Summing
    their costs and absorbing this logarithm into the $\bigotilde{\cdot}$ notation gives
$\bigOtilde((\G\RR/\eps)^{1/\rho})$ oracle calls.  At termination,
$\U-\lb\le\eps$ and $\U$ is the value of the
best queried point, so the returned point has error at most $\eps$.

Only transitions within a phase enter the \nullstep-step count.  The first
center of each phase adds one query.  If global query movement is also
recorded, transitions between phases cost at most $2\RR$ each in $\ell_{\q}$,
adding $\bigO(\RR\log(2\G\RR/\eps))$ in total.
\end{proof}

\begin{proof}\linkofproof{lem:buffered-implementation}
most $2\G\RR$.  Every subsequent lower-bound update uses $a\le\min_\Q\model
\le \fstar$, while $\U$ remains the best observed value.  On the lower-bound
branch, the new gap is at most $7\Gamma/8$.  On a \serious{} step it is at most
$3\Gamma/4$, even without updating $\lb$.

If the stopping test in line \ref{line:buffered-bundle-emptiness} of \Cref{alg:lipschitz-bundle} fails, the
certificate in \ref{line:buffered-certified-near-minimum-model} of \cref{alg:lipschitz-bundle} gives
\[
  \model(y)\le b\le a+\Gamma/32
  <\lb_0+5\Gamma/32=\hminus-3\Gamma/32,
\]
concluding that 
\begin{equation}\label{eq:near-minimizer-margin}
  \model(y)\le\min_{x\in\Q}\model(x)+\Gamma/32,
  \qquad
  \model(y)<\hminus-3\Gamma/32.
\end{equation}
In particular, $\K$ is nonempty with a strict model-level margin. %
The sets remain nested during the phase.
For $\Nphase$ \nullstep{} transitions between queried centers, the actual movement is
at least $\Nphase\Gamma/(2\G)$.  For the queried points $\zhat_t$
and reference centers $\z_t$, set %
$\Ectr=\max_{0\le t\le\Nphase}\norm{\zhat_t-\z_t}_{\q}
\le\Gamma/(16\G)$.  The triangle inequality then gives
\[
  \frac{\Nphase\Gamma}{2\G}
  \le\sum_{t<\Nphase}\norm{\z_{t+1}-\z_t}_{\q}+2\Nphase\Ectr,
  \qquad
  \frac{3\Nphase\Gamma}{8\G}
  \le\sum_{t<\Nphase}\norm{\z_{t+1}-\z_t}_{\q}.
\]
The reference-center movement bound in \cref{eq:center-movement} therefore
gives the phase-length estimate in \cref{eq:phase-length-proof}, up to constants.
To sum the per-phase bound in \cref{eq:phase-length-proof}, let
$\Gamma_0,\ldots,\Gamma_{J-1}>\eps$ be the active phase gaps.
Since $\Gamma_{k+1}\le(7/8)\Gamma_k$,
\[
  \sum_{k=0}^{J-1}\left(\frac{\G\RR}{\Gamma_k}\right)^{1/\rho}
  \le
  \left(\frac{\G\RR}{\eps}\right)^{1/\rho}
  \sum_{j=0}^{J-1}(7/8)^{j/\rho}
  \le
  \frac{(\G\RR/\eps)^{1/\rho}}{1-(7/8)^{1/\rho}}.
\]
The $J=\bigO(1+\log(1+2\G\RR/\eps))$ terminal and initial-center calls
are absorbed by the logarithmic factors, giving the oracle bound in
\cref{thm:movement-reduction}. 
are $\bigO(\RR\log \Nphase)$ and are absorbed since $0<\rho\le1$.

\end{proof}

\section{Geometry and value budgets for the general center}\label{sec:value_budget}

\begin{proof}\linkofproof{lem:head-tail}
If $\p=\infty$, then $\stail=\infty$, $u=0$, and $v=x$, so the claims hold.
Assume henceforth that $\p<\infty$.
For a head coordinate, $\abs{x_i}>\RR\tau$ and $1-\p\le0$, hence
\begin{equation*}
  \abs{x_i}
  =\abs{x_i}^{\p}\abs{x_i}^{1-\p}
  \le (\RR\tau)^{1-\p}\abs{x_i}^{\p}.
\end{equation*}
For a tail coordinate and finite $\stail\ge \p$,
\begin{equation*}
  \abs{x_i}^{\stail}
  =\abs{x_i}^{\p}\abs{x_i}^{\stail-\p}
  \le (\RR\tau)^{\stail-\p}\abs{x_i}^{\p}.
\end{equation*}
Since $\sum_i\abs{x_i}^{\p}\le \RR^{\p}$ and $\tau=\T^{-1/\p}$, summation gives
\begin{equation*}
  \norm{u}_1\le \RR\tau^{1-\p}=\RR\T^{1-1/\p},
  \qquad
  \norm{v}_{\stail}\le \RR\tau^{1-\p/\stail}=\RR\T^{1/\stail-1/\p}.
\end{equation*}
These are exactly $\Rhead$ and $\Rtail_{\stail}$.
For $\stail=\infty$, the tail bound follows directly from
$\norm{v}_\infty\le \RR\tau=\RR\T^{-1/\p}$.
\end{proof}

\subsection{Proof of the unified \texorpdfstring{low-$\p$}{low-p} movement bound}

For the program in \cref{eq:unified-sample-program}, abbreviate
\begin{equation}\label{eq:sample-values}
  \usample_t(\gdir)=\usample_{\K_t}(\gdir),
  \qquad
  \vsample_t(\gdir)=\vsample_{\K_t}(\gdir),
  \qquad
  \newtarget{def:sample-value}{\mval_t}(\gdir)=\F_{\gdir}(\usample_t(\gdir),\vsample_t(\gdir)).
\end{equation}
The increment in the optimal value is
\begin{equation}\label{eq:value-increment}
  \newtarget{def:value-increment}{\Delta_t}(\gdir)=\mval_{t+1}(\gdir)-\mval_t(\gdir).
\end{equation}

\begin{lemma}[Unified charge and value budget]
\label{lem:unified-charge-budget}
For nested bodies $\K_0\supseteq\cdots\supseteq\K_{\T}$ in
$\RR\B_{\p}^{\d}$, use the sample optimizers and value increments in
\cref{eq:sample-values,eq:value-increment}, with parameters from
\cref{eq:unified-exponents,eq:gaussian-parameters}.
There is a universal constant $c>0$ such that, for every $\gdir\in\R^{\d}$,
\begin{equation*}
  \frac{\rmin-1}{2}
  \sum_{t=0}^{\T-1}\norm{\vsample_{t+1}(\gdir)-\vsample_t(\gdir)}_{\rmin}^2
  +\frac{c\mu}{\Ld}
  \sum_{t=0}^{\T-1}\norm{\usample_{t+1}(\gdir)-\usample_t(\gdir)}_1^2
  \le
  \sum_{t=0}^{\T-1}\Delta_t(\gdir)
  \le
  \Rtail_{\rmin}^2+2\eta \Rhead\norm{\gdir}_\infty.
\end{equation*}
Moreover,
\begin{equation*}
  \E\sum_{t=0}^{\T-1}\Delta_t(\Z) \le
  \left(
    \E\left(\sum_{t=0}^{\T-1}\Delta_t(\Z)\right)^2
  \right)^{1/2}
  \le C \Rtail_{\rmin}^2.
\end{equation*}

\end{lemma}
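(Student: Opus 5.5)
The plan is to obtain the lower bound on each $\Delta_t$ from strong convexity plus first-order optimality over nested sets, to obtain the upper bound by telescoping and bounding the range of $\F_{\gdir}$ on the lift, and to obtain the moment bound from a Gaussian estimate on $\norm{\Z}_\infty$.

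\textbf{Lower bound on each increment.} Fix $\gdir$. The bodies are nested, so the lifts are nested too: $\Lift(\K_{t+1})\subseteq\Lift(\K_t)$. Hence $(\usample_{t+1},\vsample_{t+1})$ is feasible for the program defining $(\usample_t,\vsample_t)$. Let $\phi(u,v)=\F_{\gdir}(u,v)$, and recall that $(\usample_t,\vsample_t)$ minimizes $\phi$ over the convex set $\Lift(\K_t)$. The first-order optimality condition then gives
\[
  \Delta_t(\gdir)\ge \D_{\phi}\bigl((\usample_{t+1},\vsample_{t+1}),(\usample_t,\vsample_t)\bigr),
\]
where $\D_\phi$ is the Bregman divergence of $\phi$. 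Since $\phi$ is not differentiable everywhere (at $u=0$ the $\ell_{\snear}$ norm squared is still $C^1$, so this is fine), I would instead use the generic strong-convexity form: for a minimizer $w^\star$ over a convex set containing $w'$, one has $\phi(w')-\phi(w^\star)\ge$ the strong convexity modulus term. This bound is obtained from the convexity inequality along the segment, dividing by $\lambda$ and sending $\lambda\to0$.

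The linear term $-\eta\ip{\gdir}{u}$ contributes nothing to the divergence. So the divergence splits into two pieces. The first is the $v$-part, $\D_{\frac12\norm{\cdot}_{\rmin}^2}\ge\frac{\rmin-1}{2}\norm{\cdot}_{\rmin}^2$ by \cref{fact:unif-cvx-ell-p}. The second is the $u$-part, $\mu\,\D_{\frac12\norm{\cdot}_{\snear}^2}\ge \frac{\mu e^{-2}}{2\Ld}\norm{\cdot}_1^2$ by \cref{lem:near-l1-geometry}. Summing over $t$ gives the left inequality with $c=e^{-2}/2$. For $\rmin=2$ the $v$-part is exact with constant $1/2$.

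\textbf{Upper bound on the sum.} The sum telescopes to $\mval_{\T}(\gdir)-\mval_0(\gdir)$. For any feasible point, by the lift constraints,
\[
  \F_{\gdir}\in\Bigl[-\eta\Rhead\norm{\gdir}_\infty,\ \tfrac12\Rtail_{\rmin}^2+\tfrac{\mu}{2}\Rhead[2]+\eta\Rhead\norm{\gdir}_\infty\Bigr].
\]
Here I use $\norm{u}_{\snear}\le\norm{u}_1\le\Rhead$ and Hölder's inequality $\abs{\ip{\gdir}{u}}\le\norm{\gdir}_\infty\norm{u}_1$. Since $\mu\Rhead[2]=\Rtail_{\rmin}^2$, the difference is at most $\Rtail_{\rmin}^2+2\eta\Rhead\norm{\gdir}_\infty$. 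Feasibility of the final lift follows from \cref{lem:head-tail} applied with $\stail=\rmin$, since $\K_{\T}$ is nonempty.

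\textbf{Moment bound.} The first inequality is Jensen. For the second, square the deterministic bound and use $(a+b)^2\le2a^2+2b^2$, which leaves the term $\E\norm{\Z}_\infty^2\le C\log(2\d)\le C\Ld$. By the choice $\eta=\Rtail_{\rmin}^2/(\Rhead\sqrt{\Ld})$, we have $\eta\Rhead\sqrt{\Ld}=\Rtail_{\rmin}^2$. So the second moment is $\bigO(\Rtail_{\rmin}^4)$, and taking square roots gives $C\Rtail_{\rmin}^2$.

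\textbf{Main obstacle.} I expect the main obstacle to be the optimality-gap step when $\phi$ is only strongly convex in a split norm and possibly nonsmooth. One such case is $\rmin=\q<2$ close to $1$, where the $v$-part modulus degenerates; but the statement keeps $\rmin-1$ explicit, so this is acceptable. I would handle this step via the segment argument rather than gradients.
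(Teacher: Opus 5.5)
Your proposal is correct and follows essentially the same route as the paper. The lower bound comes from first-order optimality over the nested lifts plus blockwise strong convexity, with $c=e^{-2}/2$ via \cref{lem:near-l1-geometry}. The upper bound comes from telescoping and the range of $\F_{\gdir}$ on the lift. The moment bound comes from Jensen, $(a+b)^2\le 2a^2+2b^2$ and $\E\norm{\Z}_\infty^2\lesssim\Ld$.

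Two minor remarks. The segment argument you plan as a fallback is not needed: $\rmin,\snear>1$ already make $\F_{\gdir}$ differentiable, as you noted yourself. Squaring the deterministic bound uses $\sum_t\Delta_t(\gdir)\ge0$, which your lower bound already supplies.
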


\begin{proof}
Write $w_t=(\usample_t(\gdir),\vsample_t(\gdir))$.  The Bregman divergence of $\F_{\gdir}$ is the sum
of the divergences of its two squared norm terms; the linear perturbation
contributes nothing.  The lower and upper bounds follow from the single
chain
\begin{equation*}
  \begin{aligned}
    \frac{\rmin-1}{2}
      \sum_{t=0}^{\T-1}&\norm{\vsample_{t+1}(\gdir)-\vsample_t(\gdir)}_{\rmin}^2
    +\frac{c\mu}{\Ld}
    \sum_{t=0}^{\T-1}\norm{\usample_{t+1}(\gdir)-\usample_t(\gdir)}_1^2 \\
    &\circled{1}[\le]
    \sum_{t=0}^{\T-1}\D_{\F_{\gdir}}(w_{t+1},w_t)
    \circled{2}[\le]
    \sum_{t=0}^{\T-1}\Delta_t(\gdir) =\F_{\gdir}(w_{\T})-\F_{\gdir}(w_0) \\
    &\circled{3}[\le]
    \sup_{w\in\Lift_{\p,\q,\T}(\K_0)}\F_{\gdir}(w)
    -\inf_{w\in\Lift_{\p,\q,\T}(\K_0)}\F_{\gdir}(w) \\
    &\circled{4}[\le]
    \frac{\Rtail_{\rmin}^2}{2}
    +\frac{\mu \Rhead[2]}{2}
    +2\eta \Rhead\norm{\gdir}_\infty
    =\Rtail_{\rmin}^2+2\eta \Rhead\norm{\gdir}_\infty.
  \end{aligned}
\end{equation*}
Here \circled{1} uses item 1 in \cref{fact:unif-cvx-ell-p} %
and \cref{lem:near-l1-geometry}.  For \circled{2}, the
Bregman identity expresses $\Delta_t(\gdir)$ as
$\D_{\F_{\gdir}}(w_{t+1},w_t)+\ip{\nabla \F_{\gdir}(w_t)}{w_{t+1}-w_t}$; nestedness gives
$w_{t+1}\in\Lift_{\p,\q,\T}(\K_{t+1})\subseteq\Lift_{\p,\q,\T}(\K_t)$, so first-order
optimality of $w_t$ gives
$\ip{\nabla \F_{\gdir}(w_t)}{w_{t+1}-w_t}\ge0$.  The inequality \circled{3} uses
$w_0,w_{\T}\in\Lift_{\p,\q,\T}(\K_0)$.  Finally, \circled{4} uses
$0 \le \norm{v}_{\rmin}\le \Rtail_{\rmin}$, $0 \le\norm{u}_{\snear}\le \Rhead$, and
$\abs{\ip{\gdir}{u}}\le \Rhead\norm{\gdir}_\infty$; the last identity uses
$\mu \Rhead[2]=\Rtail_{\rmin}^2$.  The same first-order argument before summing gives
$\Delta_t(\gdir)\ge \D_{\F_{\gdir}}(w_{t+1},w_t)\ge0$ for every $t<\T$.
This proves the first part. 

Finally,
\begin{equation*}
  \E\sum_{t=0}^{\T-1}\Delta_t(\Z)
  \circled{1}[\le]
  \left(
    \E\left(\sum_{t=0}^{\T-1}\Delta_t(\Z)\right)^2
  \right)^{1/2}
  \circled{2}[\le]
  \Big(2(\Rtail_{\rmin}^2)^2
  +2(2\eta \Rhead)^2 \E\norm{\Z}_\infty^2\Big)^{1/2}
  \circled{2}[\le] C \Rtail_{\rmin}^2.
\end{equation*}
Here \circled{1} follows from the Jensen inequality, and \circled{2}  uses the above derivation, nonnegativity of $\Delta_t$ and Young's inequality, and \circled{3} uses the Gaussian maximum  estimate $\E\norm{\Z}_{\infty}^2 \leq  c\log \d$, together with \eqref{eq:gaussian-parameters}. %
\end{proof}

\begin{proof}\linkofproof{prop:low-p-center}
Let $\newtarget{def:mean-components}{\ubar_t}=\E \usample_t(\Z)$ and $\vbar_t=\E \vsample_t(\Z)$.  We first bound the tail
and the head in the two norms used at the final interpolation step.

\paragraph{Tail movement.}\leavevmode
Cauchy-Schwarz, Jensen's inequality for the squared norm,
and \cref{lem:unified-charge-budget} give
\begin{align}
  \sum_{t=0}^{\T-1}\norm{\vbar_{t+1}-\vbar_t}_{\rmin}
  \le
  \sqrt{
    \T\E\sum_{t=0}^{\T-1}
    \norm{\vsample_{t+1}(\Z)-\vsample_t(\Z)}_{\rmin}^2
  }
  \le
  \sqrt{
    \frac{2\T}{\rmin-1}
    \E\sum_{t=0}^{\T-1}\Delta_t(\Z)
  }
  \le
  C_{\rmin}\Rtail_{\rmin}\sqrt \T.
  \label{eq:unified-tail-movement}
\end{align}

\paragraph{Head movement in $\ell_1$.}
Cauchy--Schwarz and Jensen, applied as in
\cref{eq:unified-tail-movement} to the head term in
\cref{lem:unified-charge-budget}, give
\begin{equation} \label{eq:unified-head-l1}
  \sum_{t=0}^{\T-1}\norm{\ubar_{t+1}-\ubar_t}_1
  \le
  \sqrt{
    \frac{\T \Ld}{c\mu}
    \E\sum_{t=0}^{\T-1}\Delta_t(\Z)
  }
  \le
  C \Rhead\sqrt{\T \Ld}.
\end{equation}
Here $\mu=\Rtail_{\rmin}^2/\Rhead[2]$ by \cref{eq:gaussian-parameters}.

\paragraph{Head movement in $\ell_2$.}
In \cref{eq:unified-sample-program}, the only dependence of $\F_{\gdir}$ on $\gdir$ is the linear term
$-\eta\ip{\gdir}{u}$.  For the optimal value $\mval_t$ in
\cref{eq:sample-values}, uniqueness of the sample optimizer and Danskin's theorem
give
$
  \nabla \mval_t(\gdir)=-\eta \usample_t(\gdir).
$
Since $\norm{\usample_t(\gdir)}_2\le \Rhead$, each $\mval_t$ is
$\eta \Rhead$-Lipschitz.  Thus $\Delta_t$ is Lipschitz, and Gaussian
integration by parts applies coordinatewise, yielding
\begin{equation}\label{eq:euclidean-ibp}
\begin{aligned}
  \ubar_{t+1}-\ubar_t
  =\E[\usample_{t+1}(\Z)-\usample_t(\Z)]
  =
  -\frac1\eta\E[\nabla\Delta_t(\Z)]
  =-\frac1\eta\E[\Z\Delta_t(\Z)].
\end{aligned}
\end{equation}
Let $b_t=\E[\Z\Delta_t(\Z)]$ and take the deterministic unit vector
$a_t=b_t/\norm{b_t}_2$ when $b_t\ne0$, and any deterministic Euclidean unit
vector when $b_t=0$.  Since every $\Delta_t$ is nonnegative,
\begin{align} \label{eq:unified-head-l2}
    \begin{aligned}
  \sum_{t=0}^{\T-1}\norm{\ubar_{t+1}-\ubar_t}_2
  &\circled{1}[\le]
  \frac1\eta
  \E\left[
    \left(\sum_{t=0}^{\T-1}\Delta_t(\Z)\right)
    \max_{t<\T}\abs{\ip{\Z}{a_t}}
  \right] \\
  &\circled{2}[\le]
  \frac1\eta
  \left(
    \E\left(\sum_{t=0}^{\T-1}\Delta_t(\Z)\right)^2
  \right)^{1/2}
  \left(
    \E\max_{t<\T}\abs{\ip{\Z}{a_t}}^2
  \right)^{1/2} \\
  &\circled{3}[\le]
  C\frac{\Rtail_{\rmin}^2}{\eta}\sqrt{\log(2\T)}
  =
  C \Rhead\sqrt{\Ld\log(2\T)}.
    \end{aligned}
\end{align}
Here $\circled{1}$ uses \cref{eq:euclidean-ibp} and the choice of $a_t$.
The inequality $\circled{2}$ is Cauchy-Schwarz.  The inequality
$\circled{3}$ uses \cref{lem:unified-charge-budget} and the Gaussian maximum
estimate 
\(
    \sqrt{\E\max_{t<\T}\abs{\ip{\Z}{a_t}}^2}
  \le C\sqrt{\log(2\T)}.
\)

\paragraph{Common conversion to $\ell_{\q}$.}
Let
 \( \theta_{\q}=\left(\frac2\q-1\right)_{\pospart}.\) 
If $\q<2$, by the convexity of $\p\mapsto\log\norm{x}_{\p}^{\p}$%
\begin{equation*}
  \norm{x}_{\q}
  \le
  \norm{x}_1^{\theta_{\q}}\norm{x}_2^{1-\theta_{\q}}.
\end{equation*}
If $\q\ge2$, then $\theta_{\q}=0$ and use norm monotonicity to conclude that
$\norm{x}_{\q}\le\norm{x}_2$.  Applying H\"older's inequality  and 
using \cref{eq:unified-head-l1,eq:unified-head-l2} gives
\begin{align}
  \sum_{t=0}^{\T-1}\norm{\ubar_{t+1}-\ubar_t}_{\q}
  &\le
  \left(
    \sum_{t=0}^{\T-1}\norm{\ubar_{t+1}-\ubar_t}_1
  \right)^{\theta_{\q}}
  \left(
    \sum_{t=0}^{\T-1}\norm{\ubar_{t+1}-\ubar_t}_2
  \right)^{1-\theta_{\q}}\notag\\
  &\le
  C_{\q}\Rhead
  \T^{1/\rmin-1/2}
  \sqrt{\Ld}\,
  (\log(2\T))^{1-1/\rmin},
  \label{eq:unified-head-lq}
\end{align}
where we used that $\frac{\theta_{\q}}{2}=\frac1\rmin-\frac12$ and $\frac{1-\theta_{\q}}{2}=1-\frac1\rmin$.
Finally, $\q\ge \rmin$ implies
$\norm{\vbar_{t+1}-\vbar_t}_{\q}
\le\norm{\vbar_{t+1}-\vbar_t}_{\rmin}$.  Combining
\cref{eq:unified-tail-movement,eq:unified-head-lq} with
\begin{equation*}
  \Rtail_{\rmin}\sqrt \T
  =
  \Rhead\T^{1/\rmin-1/2}
  =
  \RR\T^{1-\frac1\p+\left(\frac1\q-\frac12\right)_{\pospart}}
\end{equation*}
gives
\begin{equation*}
  \sum_{t=0}^{\T-1}
  \norm{\Cgauss(\K_{t+1})-\Cgauss(\K_t)}_{\q}
  \le
  C_{\q}\RR
  \T^{1-\frac1\p+\left(\frac1\q-\frac12\right)_{\pospart}}
  \left[
    1+\sqrt{\Ld}\,(\log(2\T))^{1-1/\rmin}
  \right].
\end{equation*}
This proves the theorem.
\end{proof}

\subsection{Proof of the energy and implementation statements}

\begin{proof}\linkofproof{prop:p-energy}
We will first prove a ``Bregman-movement'' bound for the $\p$-energy selector; more precisely,
\begin{equation}
  \sum_{t=0}^{\T-1}\D_{\Phip}(\z_{t+1},\z_t)
  \le
  \frac{\RR^{\p}}{\p}.
  \label{eq:common-bregman-energy-bound}
\end{equation}
We will then transfer this Bregman-movement bound into an $\ell_{\q}$-movement bound to conclude.

Since
$\z_{t+1}\in \K_{t+1}\subseteq \K_t$, first-order optimality conditions for \eqref{eq:selector_p_geq_2} yields%
\begin{equation*}
  \ip{\nabla\Phip(\z_t)}{\z_{t+1}-\z_t}\ge0.
\end{equation*}
Therefore
\begin{align*}
  \D_{\Phip}(\z_{t+1},\z_t)
  &=
  \Phip(\z_{t+1})-\Phip(\z_t)
  -\ip{\nabla\Phip(\z_t)}{\z_{t+1}-\z_t} \\
  &\le
  \Phip(\z_{t+1})-\Phip(\z_t).
\end{align*}
Summing this inequality over $t<\T$ yields
\begin{align*}
  \sum_{t=0}^{\T-1}\D_{\Phip}(\z_{t+1},\z_t)
  &\le
  \Phip(\z_{\T})-\Phip(\z_0) \\
  &\le
  \Phip(\z_{\T})
  \le
  \frac{\RR^{\p}}{\p},
\end{align*}
which proves \cref{eq:common-bregman-energy-bound}. %

Next, by \cref{prop:Bregman-LB-unif-cvx} and \cref{fact:unif-cvx-ell-p} (using item 1 when $\p=2$ and item 2 when $\p>2$), we get $\D_{\Phip}(y,x)\geq \frac{2^{2-\p}}{\p}\norm{y-x}_{\p}^{\p}$.
Combining this bound with
\cref{eq:common-bregman-energy-bound} we get
\begin{equation*}
  \frac{2^{2-\p}}{\p}
  \sum_{t=0}^{\T-1}\norm{\z_{t+1}-\z_t}_{\p}^{\p}
  \le
  \frac{\RR^{\p}}{\p}.
\end{equation*}
Thus
\begin{equation}
  \sum_{t=0}^{\T-1}\norm{\z_{t+1}-\z_t}_{\p}^{\p}
  \le
  2^{\p-2}\RR^{\p}.
  \label{eq:common-p-energy}
\end{equation}

Since $\q\ge \p$, %
  \(\norm{\z}_{\q}\le\norm{\z}_{\p}.\) 
Hence H\"older's inequality and
\cref{eq:common-p-energy} provide the following estimate,
\begin{align*}
  \sum_{t=0}^{\T-1}\norm{\z_{t+1}-\z_t}_{\q}
  &\le
  \sum_{t=0}^{\T-1}\norm{\z_{t+1}-\z_t}_{\p} \\
  &\le
  \T^{1-1/\p}
  \left(
    \sum_{t=0}^{\T-1}\norm{\z_{t+1}-\z_t}_{\p}^{\p}
  \right)^{1/\p} \\
  &\le
  2^{1-2/\p}\RR \T^{1-1/\p}.
\end{align*}
\end{proof}

\begin{proof}\linkofproof{prop:monte-carlo}
Fix a round $t$ and condition on the history before its samples are drawn.
Write $\K=\K_t$, $\Y_j=\Y_{t,j}$, and $\Yhat_j=\Yhat_{t,j}$.
The body $\K$ is now fixed and the fresh samples are independent.
Let $C=\Cgauss(\K)=\E[\usample_{\K}(\Z)+\vsample_{\K}(\Z)]$ and define
the exact empirical average
\[
  \newtarget{def:empirical-center}{\Chat_{\Nsample}}(\K)
  =\frac1\Nsample\sum_{j=1}^{\Nsample}\Y_j.
\]
We first bound this average's error, then transfer the bound to numerical
samples and sum the movement errors on one simultaneous success event.
\paragraph{Exact empirical average.}
Every $\Y_j$ belongs to the convex set $\K$, so $\Chat_{\Nsample}(\K)\in \K$.
Moreover, $\K\subseteq \RR \B_{\p}^{\d}$ and $\p<\rmin$, hence
$\norm{\Y_j}_{\rmin}\le\norm{\Y_j}_{\p}\le \RR$.

Let $\sigma_1,\ldots,\sigma_{\Nsample}$ be independent Rademacher signs, independent
of the samples.  We follow the standard symmetrization argument followed by the type-$\rmin$
inequality for $\ell_{\rmin}$,%
\begin{equation*}
  \begin{aligned}
    \E\norm{\Chat_{\Nsample}(\K)-C}_{\rmin}
    \circled{1}[\le]
    \frac2\Nsample\E\norm{\sum_{j=1}^{\Nsample}\sigma_j\Y_j}_{\rmin} 
    \circled{2}[\le]
    \frac2\Nsample\left(\E\sum_{j=1}^{\Nsample}\norm{\Y_j}_{\rmin}^{\rmin}\right)^{1/\rmin}
    \le 2\RR \Nsample^{-1/\rmin^{\conj}}.
  \end{aligned}
\end{equation*}
Here $\circled{1}$ is the symmetrization inequality and $\circled{2}$ is the
Rademacher type inequality; see \citet{ledouxTalagrand1991} for both facts.
For completeness, the second step follows coordinatewise from
\begin{equation*}
  \E_\sigma\norm{\sum_{j=1}^{\Nsample}\sigma_jy_j}_{\rmin}^{\rmin}
  =
  \sum_{i=1}^{\d}\E_\sigma\abs{\sum_{j=1}^{\Nsample}\sigma_jy_{j,i}}^{\rmin}
  \circled{1}[\le]
  \sum_{i=1}^{\d}\left(\sum_{j=1}^{\Nsample}y_{j,i}^2\right)^{\rmin/2}
  \circled{2}[\le]
  \sum_{j=1}^{\Nsample}\norm{y_j}_{\rmin}^{\rmin}.
\end{equation*}
Here $\circled{1}$ is the upper Khintchine inequality, which holds for all
$r\in(0,\infty)$ with an $r$-dependent constant and here the constant is one
since $\rmin\le2$.  Step $\circled{2}$ uses $\rmin\le2$ and norm monotonicity,
$\|a\|_2\le\|a\|_{\rmin}$, applied to each coordinate sequence
$a=(y_{j,i})_{j=1}^{\Nsample}$.

Replacing one sample $\Y_j$ by $\Y_j'$ changes the norm of the centered
empirical average by at most
\begin{equation*}
  \frac1\Nsample\norm{\Y_j-\Y_j'}_{\rmin}\le\frac{2\RR}{\Nsample}.
\end{equation*}
The bounded-differences inequality of \citet{mcdiarmid1989bounded} therefore
gives, conditionally on the history, with probability at least
$1-\delta/(\T+1)$,
\begin{equation}\label{eq:monte-carlo-single-error}
  \norm{\Chat_{\Nsample}(\K)-C}_{\rmin}
  \le
  \frac{2\RR}{ \Nsample^{1/\rmin^{\conj}}}
  +\RR\sqrt{\frac{2\log((\T+1)/\delta)}{\Nsample}}.
\end{equation}
The sample-size condition in \cref{eq:monte-carlo-sample-size} makes the two terms on the right each at most $\errE/2$ after
enlarging $C_{\rmin}$.  Since $\q\ge \rmin$, norm monotonicity gives the claimed
$\ell_{\q}$ bound.

\paragraph{Numerical solutions.}
If $\Yhat_j\in \K$ and
$\norm{\Yhat_j-\Y_j}_{\q}\le \Esol$, convexity preserves
feasibility and, on the single-round success event
$\norm{\Chat_{\Nsample}(\K)-C}_{\q}\le\errE$ from
\cref{eq:monte-carlo-single-error} and the sample-size condition
\cref{eq:monte-carlo-sample-size},
\begin{equation*}
  \norm{
    \frac1\Nsample\sum_{j=1}^{\Nsample}\Yhat_j-C
  }_{\q}
  \le
  \norm{\Chat_{\Nsample}(\K)-C}_{\q}
  +\frac1\Nsample\sum_{j=1}^{\Nsample}\norm{\Yhat_j-\Y_j}_{\q}
  \le \errE+\Esol.
\end{equation*}

We next explain how an objective tolerance ensures the assumed samplewise
error.  Fix $\gdir$, let $w=(u,v)$ be the exact optimizer in
\cref{eq:unified-sample-program}, and let
$\widehat w=(\widehat u,\widehat v)$ be feasible with
$\F_{\gdir}(\widehat w)-\F_{\gdir}(w)\le\newtarget{def:objective-tolerance}{\zeta}$.  Then%
\begin{equation*}
  \begin{aligned}
    \zeta
    &\ge
    \D_{\F_{\gdir}}(\widehat w,w)
    +\ip{\nabla \F_{\gdir}(w)}{\widehat w-w} \circled{1}[\ge]
    \D_{\F_{\gdir}}(\widehat w,w) \circled{2}[\ge]
    \frac{c\mu}{\Ld}\norm{\widehat u-u}_1^2
    +\frac{\rmin-1}{2}\norm{\widehat v-v}_{\rmin}^2.
  \end{aligned}
\end{equation*}
The inequality $\circled{1}$ is first-order optimality of $w$ over the convex
lift.  For $\circled{2}$, \cref{lem:near-l1-geometry} gives the head term, and \cref{fact:unif-cvx-ell-p} gives the tail term. 
Thus, if we take
\begin{equation}\label{eq:sample-objective-tolerance}
  \zeta
  \le
  c\Esol[2]
  \min\left\{\frac{\mu}{\Ld},\rmin-1\right\},
\end{equation}
then since $\q\ge \rmin$, we have
\begin{equation*}
  \norm{(\widehat u+\widehat v)-(u+v)}_{\q}
  \le
  \norm{\widehat u-u}_1+\norm{\widehat v-v}_{\rmin} \leq \sqrt{\frac{\Ld\zeta}{c\mu}} + \sqrt{\frac{2\zeta}{\rmin-1}} \leq \Esol.
\end{equation*}

\paragraph{Path error.}
Each of the $\T+1$ conditional failure probabilities is at most
$\delta/(\T+1)$.  Taking expectations over the histories and applying a
union bound gives simultaneous success with probability at least $1-\delta$,
even for adaptively chosen bodies.  On this event, every queried average
is within $\errE+\Esol$ of $\z_t=\Cgauss(\K_t)$.

More generally, write
$\newtarget{def:center-error}{\Ectr}
=\max_{0\le t\le\T}\norm{\zhat_t-\z_t}_{\q}$.
The triangle inequality gives the deterministic path-error bound
\begin{equation}\label{eq:approximate-path-error}
  \begin{aligned}
    \sum_{t=0}^{\T-1}\norm{\zhat_{t+1}-\zhat_t}_{\q}
    &\le
    \sum_{t=0}^{\T-1}\left(
      \norm{\z_{t+1}-\z_t}_{\q}
      +\norm{\zhat_{t+1}-\z_{t+1}}_{\q}
      +\norm{\zhat_t-\z_t}_{\q}
    \right) \\
    &\le
    \sum_{t=0}^{\T-1}\norm{\z_{t+1}-\z_t}_{\q}
    +2\T\Ectr.
  \end{aligned}
\end{equation}
On the simultaneous success event, $\Ectr\le\errE+\Esol$.
For the choices in the proposition,
\[
  \Ectr
  \le \frac{\RR}{2}
  \T^{-\frac1\p+(\frac1\q-\frac12)_{\pospart}}.
\]
Substituting this bound into \cref{eq:approximate-path-error} and using the
exact-center movement bound in \cref{prop:low-p-center} proves
\cref{eq:monte-carlo-movement}.
\end{proof}

\section{Polynomial implementation}

\begin{proof}\linkofproof{thm:polynomial-implementation}
Rescaling to $\f(\RR x)/(\G\RR)$ with tolerance $\eps/(\G\RR)$ lets us
assume $\G=\RR=1$.
Replace irrational finite exponents by rational $p'\le\p$ and $q'\ge\q$
sufficiently close that norm comparisons and complexity bounds change only
by absolute factors; keep $\q=\infty$ unchanged.
Choose $p'$ so that $\B_{p'}^{\d}\subseteq\B_{\p}^{\d}
\subseteq(1+\eps/4)\B_{p'}^{\d}$: solving on the inner ball to accuracy
$\eps/2$ and subtracting $\eps/4$ from its lower certificate preserves
feasibility and $\eps$-accuracy for the original problem.
Absorb these constants and relabel the rational parameters below.
We first construct the model-minimum certificate on
the original ball, then use its strict model-level marginto exhibit interior balls
for the queried sets and lifts.  All subproblem objectives and constraints
use only explicit norm functions and stored oracle cuts; solving them
requires no additional calls to the first-order oracle of $\f$.

\paragraph{The model-minimum certificate.}
The original domain has explicit inner and outer balls:
\[
  \d^{-1/2}\B_2^{\d}
  \subseteq\Q=\B_{\p}^{\d}
  \subseteq\d^{1/2}\B_2^{\d}.
\]
The model $\model$ is convex and $1$-Lipschitz in $\ell_{\q}$, since each
stored cut has normal of $\ell_{\q^{\conj}}$ norm at most one.  Its value
and a subgradient are computed by finding an active affine cut.
Norm evaluation supplies a strong separation oracle for $\Q$.
The separation-based ellipsoid method therefore produces a feasible $y$
and certified bounds
$a\le\min_{\Q}\model\le\model(y)\le b$ with $b-a\le\Gamma/32$ in polynomial
time; see \citep[Chapter~4]{groetschelLovaszSchrijver1993} and the
objective-level argument below.  This constructs 
line \ref{line:buffered-certified-near-minimum-model} of \Cref{alg:lipschitz-bundle} without an assumption on the geometry of any model sublevel set.

\paragraph{Interior balls for model sublevel sets}%
We first note that, at every inner loop, the proof of \cref{lem:buffered-implementation} supplies
$y\in\Q$ with $\model(y)<\hminus-3\Gamma/32$ by
\cref{eq:near-minimizer-margin}.
Set $\alpha=\Gamma/64$ and $c=(1-\alpha)y$.  The initialization and valid
certificate updates give $0<\Gamma\le2$, so $0<\alpha<1$.
Since $\norm{y}_{\q}\le\norm{y}_{\p}\le1$,
\[
  \norm{c}_{\p}\le1-\alpha,
  \qquad
  \model(c)\le\model(y)+\alpha
  <\hminus-5\alpha.
\]
For any displacement $h$,
$\norm{h}_{\p},\norm{h}_{\q}\le\sqrt{\d}\norm{h}_2$.
Consequently,
\[
  c+\frac{\Gamma}{128\sqrt{\d}}\B_2^{\d}
  \subseteq\K=\{x\in\Q:\model(x)\le\hminus\}
  \subseteq c+2\sqrt{\d}\B_2^{\d}.
\]
Every model sublevel set %
is therefore full-dimensional, with a known interior
point and inverse-polynomial inner radius because $\Gamma>\eps$.

\paragraph{Interior balls for the lifts.}
Let $\rmin=\min\{\q,2\}$ and let $\T$ be the current selector
horizon. Let $y$ be as in line \ref{line:buffered-certified-near-minimum-model} of \cref{alg:lipschitz-bundle}. Apply the explicit decomposition in \cref{lem:head-tail} to it:
$y=u+v$, $\norm{u}_1\le\Rhead$, and
$\norm{v}_{\rmin}\le\Rtail_{\rmin}$.
The pair $((1-\alpha)u,(1-\alpha)v)$ has respective norm slacks
$\alpha\Rhead$ and $\alpha\Rtail_{\rmin}$, and its sum is $c$.
It follows that
\[
  ((1-\alpha)u,(1-\alpha)v)
  +\frac{\alpha\min\{1,\Rhead,\Rtail_{\rmin}\}}{4\sqrt{\d}}\B_2^{2\d}
  \subseteq\Lift_{\p,\q,\T}(\K).
\]
Indeed, a product-space displacement $(h,k)$ of norm at most the displayed
radius changes either component norm by at most
$\sqrt{\d}\norm{(h,k)}_2$ and changes the sum in either $\ell_{\p}$ or
$\ell_{\q}$ by at most $\sqrt{2\d}\norm{(h,k)}_2$.
These changes are smaller than the component slacks and the slacks at $c$.
Moreover, $\T^{-1}\le\Rtail_{\rmin}\le1$ and $1\le\Rhead\le\T$.
Thus the displayed radius is at least
$\Gamma/(256\sqrt{\d}\T)$, and an outer radius about its center is at most
$2\sqrt{\Rhead[2]+\Rtail_{\rmin}^2}\le4\T$.
The lift is full-dimensional with explicit polynomial geometric bounds.

\paragraph{Feasible inexact convex solves.}
Both $\K$ and its lift have strong separation oracles obtained by checking
their norm constraints and stored affine inequalities.
The objective $\F_{\gdir}$ in \cref{eq:unified-sample-program} is convex.
Its values and subgradients use
polynomially many of the permitted arithmetic and rational-power
operations.  On the enclosing ball, its Euclidean Lipschitz bound is
polynomial in $\d$, the horizon, and $1+\norm{\gdir}_{\infty}$.
For completeness, suppose a convex objective $H$ with Euclidean Lipschitz
bound $L_H$ is minimized over a convex set $S\subseteq\R^n$ with a strong separation
oracle and known balls
$c+a\B_2^n\subseteq S\subseteq c+b\B_2^n$.
For a minimizer $w$, the point $w_\lambda=(1-\lambda)w+\lambda c$ has a
ball of radius $\lambda a$ in $S$.  Given objective tolerance $\xi>0$,
choose
\[
  \lambda=\min\{1/2,\xi/[8(L_H+1)b]\}.
\]
Then $H(w_\lambda)\le\min_S H+\xi/8$, and the ball about $w_\lambda$ of
radius $\min\{\lambda a/2,\xi/[8(L_H+1)]\}$ consists of feasible points
with objective value at most $\min_S H+\xi/4$.
An ellipsoid feasibility test at objective level $v$ therefore either
returns a feasible point of value at most $v$, or, after the corresponding
volume bound is reached, certifies $\min_S H>v-\xi/4$.
Bisection using these valid lower and upper bounds yields a feasible
point and a certified interval of width $\xi$ in polynomial time.
Subtracting $H(c)$ bounds the initial objective interval by $2L_Hb$.
The explicit inner and outer radii above make this argument polynomial
in the dimension, horizon, and inverse accuracy.

For a Gaussian sample, apply this procedure to $\F_{\gdir}$ with the
objective tolerance in \cref{eq:sample-objective-tolerance}.
Strong convexity then gives the required $\ell_{\q}$ error $\Esol$ from
the exact sample, while preserving feasibility.
For fixed norm parameters, the inverse of this sufficient objective
tolerance is polynomial in the horizon and prescribed inverse center
accuracy.
\paragraph{Sampling, stopping, and total cost.}
Choose a deterministic planned query budget from the oracle bound in
\cref{lem:buffered-implementation}, with
$\rho=1/\p-(1/\q-1/2)_{\pospart}$ and the movement bound in
\cref{prop:low-p-center}.
Terminate and return the current best feasible point if that budget is
exhausted.  Use $\errE=\Esol=\eps/64$ at every center.
On simultaneous sampling success these errors satisfy
\cref{eq:bundle-center-tolerance} throughout, since $\Gamma>\eps$.
The concentration argument in \cref{prop:monte-carlo}, with failure
probability allocated across all center queries, uses polynomially many
samples for fixed $\p,\q$.  Denote their deterministic total budget by $M$.

For the untruncated independent Gaussian draws,
\[
  \Prb\!\left\{
    \max_{1\le j\le M}\norm{\Z_j}_{\infty}
      >\sqrt{2\log(4\d M/\delta)}
  \right\}\le\delta/2.
\]
If a draw exceeds this threshold, return the current best feasible point
before solving its sample problem.  Otherwise all sampled objectives have
polynomial Lipschitz bounds.  Allocate the remaining $\delta/2$ to center
estimation failures.  Fresh sampling makes each estimate valid
conditionally on the preceding history, so a union bound gives joint
success with probability at least $1-\delta$.
On that event, \cref{lem:buffered-implementation} guarantees certified
error at most $\eps$ within the planned oracle budget.

All computed query points and both early-return rules preserve feasibility.
The planned horizon, sample count, and inverse objective tolerances are
polynomial in $\d$, $\eps^{-1}$, and $\log(1/\delta)$ for fixed norm
parameters.  The preceding convex solves therefore give the claimed
total arithmetic bound on every run, while the accuracy guarantee holds
on the joint success event.
\end{proof}

\section{Proofs of the main theorems}

\begin{proof}\linkofproof{thm:movement-main}
For $\p<2$, the exact Gaussian center in \cref{eq:gaussian-center}
is feasible and satisfies \cref{eq:movement-main} deterministically by
\cref{prop:low-p-center}.  For $2\le\p<\q$, use the energy center:
\cref{prop:p-energy} gives the same bound, since
$(1/\q-1/2)_{\pospart}=0$.
For the Monte Carlo approximation when $\p<2$, apply
\cref{prop:monte-carlo} with
$\errE=\Esol=(\RR/4)\T^{-1/\p+(1/\q-1/2)_{\pospart}}$.
It preserves feasibility and gives \cref{eq:movement-main} with probability
at least $1-\delta$.  For $\p\ge2$, the energy center already gives the
guarantee with probability one, without sampling.
It remains to justify the computational claim.  For $\p<2$, it is exactly
\cref{thm:polynomial-implementation}.  For $2\le\p<\q$, the energy center
minimizes the convex function $\norm{x}_{\p}^{\p}/\p$ over a body represented
by the ball and the stored affine cuts.  The separation-oracle argument in
the proof of \cref{thm:polynomial-implementation} computes a feasible point
within $O(\!\RR\T^{-1/\p})$ in $\ell_{\q}$ of that center in polynomial time.
The resulting path differs in total movement by at most
$O(\RR\T^{1-1/\p})$, so \cref{eq:movement-main} is preserved.
\end{proof}

\begin{proof}\linkofproof{thm:optimization-main}
Normalize $\G=\RR=1$ and set $\rho=1/\p-(1/\q-1/2)_{\pospart}$.
\Cref{prop:low-p-center} gives the reference-center movement bound, and
\cref{lem:buffered-implementation} yields
$\bigOtilde_{\p,\q}(1+\eps^{-1/\rho})$ oracle calls whenever the center
errors satisfy \cref{eq:bundle-center-tolerance}.
\Cref{thm:polynomial-implementation} supplies feasible approximations
meeting these tolerances jointly with probability at least $1-\delta$,
with polynomial additional cost.
Choose $\eps=\bigOtilde_{\p,\q}(\T^{-\rho})$ so that the planned oracle
budget is at most $\T$ (if the target exceeds one, one feasible query suffices
up to constants), and return the best queried point if that budget
is exhausted.  Every output is feasible; on the success event its error
is at most $\eps$.  Rescaling gives \cref{eq:optimization-main}.
\end{proof}

\begingroup
\renewcommand{\eta}{\newlink{def:hard-instance-scale}{\oldeta}}
\renewcommand{\Delta}{\newlink{def:hard-offsets}{\oldDelta}}
\renewcommand{\nu}{\newlink{def:hard-distribution}{\oldnu}}
\renewcommand{\delta}{\newlink{def:hard-offsets}{\olddelta}}
\newcommand{\dimslack}{\newlink{def:dimension-slack}{\olddelta}}

\section{Lower complexity bounds for randomized convex optimization algorithms}

\label{sec:LB-randomized}

We discuss now the claim of near optimality for our randomized algorithms. Note that the oracle lower bounds of the open problem in \citep{guzman2015thesis,guzman2015open} apply exclusively to deterministic algorithms, such as the one for our deterministic selector with the full expectation. We hereby extend those lower bounds to work against randomized algorithms, following the approach in \citep{Braun:2017}. We also leverage a general lower bound strategy pioneered by \citet{srebro2012convex}. For the result we only discuss the high-level ideas, and refer to the original references for further details.

Let $(E,\langle\cdot,\cdot\rangle)$ be a finite-dimensional space. For $\newtarget{def:function-sets}{\cX,\cG}\subseteq E$ convex and centrally-symmetric, consider the class of linear functionals
\[ \newtarget{def:linear-class}{\cL}(\cX, \cG) \defi\{g\mapsto \langle g,x\rangle ,\,\, x\in \cX\}, \]
as well as the class of convex Lipschitz functions 
\[ \newtarget{def:function-class}{\cF}(\cX,\cG)\defi \{f:E\to\R:\, f\mbox{ is convex and its subgradients lie in } \cG\}.\]

We also recall the notion of fat-shattering dimension of linear functionals.
\begin{definition} \label{def:fat-shattering}
    A set $g_1,\ldots,g_K\in \cG$ is $\alpha$-shattered by $\cL(\cX,\cG)$ if there exist $r_1,\ldots,r_K\in \R$ such that for every $s\in\{+1,-1\}^K$ there exists $x_s\in \cX$ such that for all $i\in [K]$,
    \[s_i(\langle g_i,x_s\rangle-r_i)>\alpha/2.\]
    We define $\newtarget{def:fat-shattering}{\fat}_{\alpha}(\cL(\cX,\cG))$ as the largest value of $K$ such that there is a set $g_1,\ldots,g_K\in \cG$ that is $\alpha$-shattered by $\cL(\cX,\cG)$.
\end{definition}

\begin{theorem} \label{thm:LB-randomized-fat}\linktoproof{thm:LB-randomized-fat}
Let $\cX,\cG\subseteq E$  be centrally symmetric, $\eps>0$, $\beta\in[0,1)$, and $K\leq \fat_{2\eps}(\cL(\cX,\cG))$. There exists a probability distribution $\newtarget{def:hard-distribution}{\nu}$ over $\cF(\cX,\cG)$ for which any randomized algorithm $\mathcal A$ making $\T$ queries to a local oracle (here $\T$ is a random variable) and providing $\widehat{x}\in \cX$ with
\[ \Prb_{f\sim \nu, \mathcal A}\Big[ f(\widehat{x})-\min_{x\in \cX}f(x) >\eps \Big] \leq \beta\]
must satisfy
\[ \E_{f\sim\nu, \mathcal A}[\T] \geq \frac{(1-\beta)K-1}{2}.\]
In particular, for nontrivial failure probability, $0\leq \beta\leq 1/2$, we have that $ \E_{f\sim\nu, \mathcal A}[\T] = \Omega(K)$.
\end{theorem}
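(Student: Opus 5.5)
We will follow the standard reduction from optimization to identifying a random sign vector, as in \citet{srebro2012convex} and \citet{Braun:2017}. Fix $g_1,\ldots,g_K\in\cG$ that are $2\eps$-shattered, with thresholds $r_1,\ldots,r_K$ and witnesses $x_s\in\cX$ for every $s\in\{\pm1\}^K$. For a sign vector $s$, define the hard instance
\[
  f_s(x)=\max_{i\in[K]}\, s_i\bigl(r_i-\langle g_i,x\rangle\bigr),
\]
possibly symmetrized by central symmetry of $\cG$. Each $f_s$ is convex, and its subgradients are convex combinations of the $\pm g_i$, so $f_s\in\cF(\cX,\cG)$. The distribution $\nu$ is the law of $f_s$ when $s$ is uniform on $\{\pm1\}^K$. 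By the shattering property, $f_s(x_s)<-\eps$, so $\min_{\cX} f_s<-\eps$. Conversely, we will show that any $\widehat x$ with $f_s(\widehat x)-\min f_s\le\eps$ has $f_s(\widehat x)<0$. That means $s_i(\langle g_i,\widehat x\rangle-r_i)>0$ for all $i$, so $\widehat x$ determines the whole sign vector $s$. Getting the shattering margins to line up with ``error $\le\eps$'' is a constant-factor bookkeeping issue. If needed, we will shift the $r_i$ or use the margin $\alpha/2=\eps$ to place $\min f_s$ at or below $-\eps$ and obtain a strict gap.

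Next comes the information argument, which uses the local oracle. The value $f_s(x)$ and an active subgradient $\pm g_i$ at $x$ depend only on the coordinates $s_i$ attaining the max. We will make the oracle answer in an adaptive ``resisting'' manner that is nevertheless consistent with the prior. Condition on the random bits of $\mathcal A$ and reveal at most one new coordinate $s_i$ per query, for instance by ordering the pieces and breaking ties so that each answer is determined by the already revealed signs plus at most one fresh one. Following \citet{Braun:2017}, we will add small distinct offsets to the $r_i$ so that the argmax is unique and locality applies. After $\T$ queries, at most $\T$ coordinates of $s$ are revealed. The remaining $K-\T$ coordinates are still uniform and independent of the transcript and of $\widehat x$.

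To conclude, the algorithm's success event requires $\widehat x$ to correctly encode all unrevealed signs. On the event $\{\T\le m\}$, this happens with probability at most $2^{-(K-m)}$. A cleaner route is to count the expected number of correctly guessed unrevealed coordinates. Each unrevealed coordinate is guessed correctly with probability exactly $1/2$. Success forces all $K$ coordinates to be correct, and revealed coordinates contribute at most $\T$. Together these give
\[
  (1-\beta)K\le \E[\#\text{correct}]\le \E[\T]+\tfrac12\bigl(K-\E[\T]\bigr)+\tfrac12,
\]
with the $+\tfrac12$ absorbing the terminal output step. Rearranging yields $\E[\T]\ge((1-\beta)K-1)/2$. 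Since only expectations are involved, a random stopping time $\T$ is handled directly.

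The main obstacle is the second step. We must build the oracle so that each query leaks at most one fresh sign even though the algorithm is randomized and queries adaptively. We must also ensure that the conditional uniformity of the unrevealed coordinates survives conditioning on the transcript. We would attempt this first via the perturbed-offset construction of \citet{Braun:2017}. With that construction, the active piece at $x$ is determined by comparing $|\langle g_i,x\rangle-r_i|$ plus offsets, and the comparison does not depend on the unknown signs.
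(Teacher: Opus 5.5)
Your packing step is fine: even without the floor $c$, $f_s(x_s)<-\eps$ forces $f_s(\widehat x)<0$ for any $\eps$-optimal output. There is, however, a genuine gap in the information step, exactly where you locate the main obstacle.

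At a query $x$, write $b_i=\langle g_i,x\rangle-r_i-\Delta_i$, so the pieces take values $-s_ib_i\in\{|b_i|,-|b_i|\}$. The order of the $|b_i|$ does not depend on the signs, but the active piece does. It is the first index, in decreasing $|b_i|$-order, with $s_i=-\operatorname{sgn}(b_i)$. With generic offsets, the returned value and subgradient identify that index. They therefore reveal every sign preceding it in the order, as well as its own.

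Under the uniform prior, the number of fresh signs leaked by one query is thus geometric with mean up to $2$, not at most one. The only way to leak a single fresh sign is the classical resisting oracle, which sets the sign of the largest unrevealed piece adversarially. That is incompatible with a fixed law $\nu$ independent of $\mathcal A$, which is what a lower bound against randomized algorithms requires. So ``at most $\T$ revealed coordinates'' and ``the rest uniform and independent of the transcript'' cannot both hold. The offsets do not help here; they only break ties.

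The final count also does not give the statement. Your displayed inequality rearranges to $\E[\T]\ge(1-2\beta)K-1$, not $((1-\beta)K-1)/2$. With the correct leakage, $\E[\#\text{revealed}]\le 2\E[\T]$, counting expected correct coordinates yields only about $(\tfrac12-\beta)K$, which is vacuous at $\beta=1/2$. The loss comes from ignoring that success needs all unrevealed signs to be right simultaneously.

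The paper's route has three steps:
\begin{enumerate}
\item Emulate the local oracle by the string-guessing oracle of \citet{Braun:2017}: guess $t_j=\operatorname{sgn}(b_{\sigma(j)})$ along the $|b|$-order, and the answer is the first mismatch or \textsc{equal}.
\item Bound the conditional entropy of each answer by $2$ bits.
\item Combine with Fano's inequality, $I(S;\text{transcript})\ge(1-\beta)K-1$, which gives $2\E[\T]\ge(1-\beta)K-1$.
\end{enumerate}
The factor $1/2$ in the theorem is exactly this two-bits-per-query bound. The offsets are taken with margin $\eta=(\gamma-\eps)/4$ so that packing survives them. They are used only for tie-breaking and to pass from the single-coordinate oracle to an arbitrary local oracle via the maximal oracle.
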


For the case of $\cX=\B_{\p}^{\d}$ and $\cG=\B_{\q}^{\d}$, it is known that for $1\le \p,\q \le \infty$ (the rates can be extracted from \citep{mendelson2004shattering,guzman2015thesis}; see \citep{martinezrubio2026firstorder} for further information) %
\begin{equation}\label{eq:fat_shattering_values}
\fat_\alpha(\cL_{\p,\q}^{\d})
\asymp_{\p,\q}
\begin{cases}
\min\left\{
    \alpha^{-\frac{1}{\frac{1}{\p}-(\frac{1}{\q}-\frac{1}{2})_{\pospart}}},\d
\right\},
& \p<\q,\\[1mm]
\min\!\left\{
    \left(\d^{1/\q-1/\p}/\alpha\right)^{\max\{\q,2\}},\d
\right\},
& \p\ge \q,  (\p,\q)\neq(1,1).
\end{cases}
\end{equation}
At \((\p,\q)=(1,1)\), the results of
\citet[Lemma~4.5, Theorem~4.9, and proof of Theorem~3.4]{mendelson2004shattering}
give the bounds:
\begin{equation}\label{eq:fat-shattering-11}
    \min\{\alpha^{-2},\d\}
    \lesssim
    \fat_\alpha(\cL_{1,\infty}^{\d})
    \lesssim
    \min\{L_\alpha\alpha^{-2},\d\},
    \qquad 0<\alpha\le1.
\end{equation}
Plugging these bounds in the previous theorem, we obtain the following result.

\begin{corollary}
    Let $\eps>0 $, $1\leq \p,\q\leq \infty$, and $\d\gtrsim \eps^{-\max\{2,\p\}+\newtarget{def:dimension-slack}{\dimslack}}$ (for arbitrary but fixed $\dimslack>0$). Then the oracle complexity of optimization over the $\RR\B_{\p}^{\d}$ with objectives in $\cF(\RR\B_{\p}^{\d},L\B_{\q}^{\d})$ with randomized algorithms is lower bounded by
    \begin{equation}\label{eq:LB-randomized}
    \T
    \gtrsim_{\p,\q}
    \begin{cases}
    \displaystyle\Big(\frac{\G\RR}{\eps}\Big)^{\frac{1}{1/\p-(1/\q-1/2)_{\pospart}}},
& \p<\q,\\[3mm]
    \displaystyle \Big(\frac{\d^{1/\q-1/\p}\G\RR}{\eps}\Big)^{\max\{\q,2\}},
& \p\ge \q.
\end{cases}
\end{equation}
\end{corollary}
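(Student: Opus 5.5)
The corollary follows by plugging the fat-shattering estimates \cref{eq:fat_shattering_values} into \cref{thm:LB-randomized-fat} at scale $\alpha=2\eps$, after normalizing $\G$ and $\RR$.

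\emph{Normalization.} Write $f(x)=\G\RR\,\tilde f(x/\RR)$. Then $\tilde f$ has subgradients in $\B_{\q^{\conj}}^{\d}$ (the dual ball, matching the $\cL_{\p,\q}$ convention) and lives on $\B_{\p}^{\d}$. Accuracy $\eps$ for $f$ corresponds to accuracy $\eps/(\G\RR)$ for $\tilde f$, and a randomized algorithm for one class is a randomized algorithm for the other with the same query count. So it suffices to treat $\G=\RR=1$ with accuracy $\eps$.

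\emph{Applying the theorem.} Take $\beta=1/2$ and $K=\fat_{2\eps}(\cL(\B_{\p}^{\d},\B_{\q^{\conj}}^{\d}))$. \Cref{thm:LB-randomized-fat} gives a distribution $\nu$ under which any algorithm succeeding with probability at least $1/2$ has $\E[\T]\ge (K/2-1)/2=\Omega(K)$. Since a worst-case bound dominates an average over $\nu$, the same lower bound holds for the worst instance. For a deterministic budget one can also argue via Markov, or simply note that the theorem already bounds $\E\T$.

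\emph{Evaluating $K$.}
\begin{itemize}
\item For $\p<\q$, \cref{eq:fat_shattering_values} gives $K\asymp\min\{(2\eps)^{-1/\rho},\d\}$ with $\rho=1/\p-(1/\q-1/2)_{\pospart}$. The minimum equals the first term as soon as $\d\gtrsim\eps^{-1/\rho}$. Since $1/\rho\le\max\{2,\p\}$ (check: $\rho\ge\min\{1/2,1/\p\}$, because if $\q\ge2$ then $\rho=1/\p$, and otherwise $\rho\ge1/\p-1/\q+1/2\ge1/2$), the hypothesis $\d\gtrsim\eps^{-\max\{2,\p\}+\dimslack}$ with small $\eps$ suffices, up to the slack.
\item For $\p\ge\q$, $(\p,\q)\neq(1,1)$, we get $K\asymp\min\{(\d^{1/\q-1/\p}/\eps)^{\max\{\q,2\}},\d\}$. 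We need the first term to be at most $\d$, which holds since the exponent combination is controlled by the dimension assumption.
\item At $(1,1)$, use the lower estimate in \cref{eq:fat-shattering-11}: $\min\{\eps^{-2},\d\}=\eps^{-2}$ under $\d\gtrsim\eps^{-2+\dimslack}$. The slack $\dimslack$ is exactly what absorbs logarithmic or polynomial losses in this comparison.
\end{itemize}
Undoing the normalization replaces $\eps$ by $\eps/(\G\RR)$, which yields \cref{eq:LB-randomized}.

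\emph{Main obstacle.} The main difficulty is the bookkeeping showing that the minimum with $\d$ is not active under the stated dimension condition, uniformly in all regimes. This matters especially for $\p\ge\q$, where $\d$ also appears inside the first term. I would first check that $(\d^{1/\q-1/\p}/\eps)^{\max\{\q,2\}}\le\d$ reduces to $\d^{\,1-\max\{\q,2\}(1/\q-1/\p)}\ge\eps^{-\max\{\q,2\}}$. If this fails in some corner, I would instead restrict to a lower-dimensional subspace of dimension $\d'\le\d$ where it holds, since a restriction to a coordinate subspace preserves the function class.
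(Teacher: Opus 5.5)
Your route is the paper's route: its proof is the single sentence ``plug the fat-shattering estimates into the randomized lower-bound theorem at scale $2\varepsilon$'', and your normalization and choice $\beta=1/2$ are fine. The genuine gap is the step you flag yourself, namely that the $\min\{\cdot,d\}$ in the fat-shattering estimates is inactive. It does not follow from the hypothesis, and in parts of the range it is false, so no bookkeeping can close it.

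First, the slack enters with the opposite sign to the one you use. After normalization $\varepsilon<1$, so $\varepsilon^{-\max\{2,p\}+\delta}=\varepsilon^{\delta}\varepsilon^{-\max\{2,p\}}$ is \emph{smaller} than $\varepsilon^{-\max\{2,p\}}$. The hypothesis is therefore weaker than $d\gtrsim\varepsilon^{-\max\{2,p\}}$, not stronger. Consider the cases where the exponent you actually need equals $\max\{2,p\}$:
for $2\le p<q$ (where $1/\rho=p$);
for $p\ge q\ge 2$ (where the condition reduces to $d\gtrsim\varepsilon^{-p}$);
and for $p=q<2$, including your treatment of $(1,1)$.
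In each of these, the choice $d\asymp(GR/\varepsilon)^{\max\{2,p\}-\delta}$ satisfies the hypothesis, gives $K\asymp d$, and makes the displayed bound larger than $d$ by a factor $(GR/\varepsilon)^{c\delta}$ with $c=c(p,q)>0$. But $\|x\|_q\le d\|x\|_p$, so $f$ varies by at most $2GRd$ on $RB_p^d$, and a center-of-gravity method needs only $O(d\log(d\,GR/\varepsilon))$ queries. The displayed bound is therefore false there. Even in the paper's regime $p<\min\{2,q\}$, where $1/\rho<2$ strictly, your argument needs $\delta\le 2-1/\rho$, which an arbitrary $\delta$ does not give.

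Second, for $p>q$ with $q<2$ no choice of slack helps. The condition $(d^{1/q-1/p}/\varepsilon)^2\lesssim d$ means $d^{1-2(1/q-1/p)}\gtrsim\varepsilon^{-2}$. This is impossible for small $\varepsilon$ when $1/q-1/p\ge 1/2$ (e.g.\ $(p,q)=(2,1)$), and it needs $d\gtrsim\varepsilon^{-6}$ at $(p,q)=(3/2,1)$. Take $d=(GR/\varepsilon)^3$, which is admissible in both examples. Then $K\asymp d$, while the displayed bound is $(GR/\varepsilon)^5$, respectively $(GR/\varepsilon)^4$, again above the cutting-plane upper bound.

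Your fallback cannot repair this. Restricting to $d'\le d$ coordinates replaces $d^{1/q-1/p}$ by the smaller $d'^{1/q-1/p}$ and caps $K$ by $d'$. So it never yields anything above $d$, which is precisely what the displayed formula asserts in these regimes.

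What the plug-in argument actually proves is
$T\gtrsim_{p,q}\min\{(GR/\varepsilon)^{1/\rho},d\}$ for $p<q$, and
$T\gtrsim_{p,q}\min\{(d^{1/q-1/p}GR/\varepsilon)^{\max\{q,2\}},d\}$ for $p\ge q$.
The displayed form requires the extra hypothesis that the first term is $\lesssim d$. For $p<q$ it suffices that $d\gtrsim(GR/\varepsilon)^{\max\{2,p\}}$; for $p\ge q$ one needs $d^{1-\max\{q,2\}(1/q-1/p)}\gtrsim(GR/\varepsilon)^{\max\{q,2\}}$.

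The paper's one-line proof makes the same identification silently, so the defect lies in the statement. It is harmless for the case $p<\min\{2,q\}$ that the paper uses, provided $\delta$ is small. Your proof, however, should carry the minimum (or the corrected dimension condition) instead of asserting that the bookkeeping goes through.
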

This lower bound when reversed in terms of accuracy matches the upper bound obtained in \cref{thm:optimization-main}, in the regime of interest $\p < \min\{2, \q\}$.

\begin{proof}\linkofproof{thm:LB-randomized-fat}
    Let $g_1,\ldots,g_K\in\cG$ be a set that is $(2\eps)$-shattered by $\cL(\cX,\cG)$ with thresholds $r_1,\ldots,r_K$. For $s\in\{-1,+1\}^K$, let $x_s\in \cX$ be the corresponding witness, from \cref{def:fat-shattering}. Hence
    \begin{equation}\label{eq:shatter-gamma}
        \gamma\defi \min_{s\in\{-1,+1\}^K} \min_{i\in[K]} s_i(\langle g_i,x_s\rangle-r_i) >\eps.
    \end{equation}
    Let $\newtarget{def:hard-instance-scale}{\eta}= \frac{\gamma-\eps}{4}$ and $c=\eta-\gamma$. Our instances will be uniformly drawn from the following family of objectives:
    \begin{equation} \label{eq:string-guessing-instances}
    f_{s,\Delta}(w) = \max\Big\{c,\max_{i\in[K]} -s_i(\langle g_i,w\rangle-r_i-\Delta_i)\Big\}, \quad s\in\{-1,+1\}^K,\ \newtarget{def:hard-offsets}{\Delta}\in [-\eta,\eta]^K.
    \end{equation}
    In particular $\nu$ is the law of $f_{S,\Delta}$ when $S\sim \mbox{Unif}(\{-1,+1\}^K)$, $\Delta\sim\mbox{Unif}([-\eta,\eta]^K)$ independently.

    \paragraph{Packing property.} We first show that solving the optimization problem on an instance determines the string parameter $s$ (a.k.a.~the packing property \citep{Braun:2017}). We note that by \cref{eq:shatter-gamma} the shattering witness $x_s$ satisfies
    \[ -s_i(\langle g_i,x_s\rangle-r_i-\Delta_i) \leq \eta-\gamma=c. \]
    In particular, $\min_{x\in \cX} f_{s,\Delta}(x)=c$.
    Let now $\hat x$ be a $\eps$-optimal solution to the minimization of $f_{s,\Delta}$. Then $f_{s,\Delta}(\hat x)\leq c+\eps=-3\eta$. In particular
    \[ 0 < 2\eta \leq  3\eta+s_i\Delta_i \leq -f_{s,\Delta}(\widehat x)+s_i\Delta_i \leq s_i(\langle g_i, \widehat x\rangle-r_i). \]
    Here the second inequality uses $\varepsilon$-optimality, and the last one uses the definition of $f_{s,\Delta}$. We conclude that 
    \[ s_i = \sgn(\langle g_i,\widehat x\rangle-r_i) \quad \forall i\in[K],\]
    i.e., $\widehat x$ determines $s$ uniquely.

    \paragraph{String-guessing oracle emulation.} We perform now a {\em string guessing-oracle} emulation, following \citep{Braun:2017}. This corresponds to emulating the answer given by a {\em single-coordinate oracle} (that is, an oracle for piecewise-affine functions that provides answers whose subgradient coincide with that of an affine piece), by an oracle that only provides the values of the some coordinates of $s$. Fix the realization $\Delta=\delta$ and reveal it to the algorithm. Let $x\in \cX$ be an arbitrary query, and consider the scores
    \[ b_i = \langle g_i,x\rangle - r_i-\delta_i \quad (\forall i\in[K]),\]
    and select a deterministic ordering $\sigma\in \mathfrak S_K$ (the permutation group) such that $|b_{\sigma(1)}|\geq \ldots\geq |b_{\sigma(K)}|$. Query the string-guessing oracle with $t_j=\sgn(b_{\sigma(j)})$, using a fixed convention when  $b_{\sigma(j)}=0$. Let $k$ be the first index in $[K]$ such that $t_k\neq s_{\sigma(k)}$, then $-s_{\sigma(k)}b_{\sigma(k)}=|b_{\sigma(k)}|$, while every later coordinate $j\geq k$ is such that $-s_{\sigma(j)}b_{\sigma(j)}\leq -s_{\sigma(k)}b_{\sigma(k)}$. Hence, the oracle returning $(-s_{\sigma(k)}b_{\sigma(k)},-s_{\sigma(k)}g_{\sigma(k)})$ is a valid single-coordinate first-order oracle answer in this case. If the answer by the string-guessing oracle is \textsc{equal}, then the entire string is known and one can compute a single-coordinate first-order oracle answer directly. Hence, the string-guessing oracle provides an emulation of a single-coordinate first-order oracle.

    \paragraph{Conclusion and lower bound for arbitrary oracles.} %
    By the packing property, every $\varepsilon$-optimal output by the algorithm identifies the random string $S$. The emulation above, together with Proposition~III.3 and Lemma~III.5 from \citep{Braun:2017}, gives the stated expected-query lower bound for the preceding first-order oracle, even when $\Delta$ is revealed.
    To extend it to an arbitrary local oracle, consider the maximal local oracle from \citep[Definition~VI.1]{Braun:2017}, from which every local oracle can be emulated \citep[Lemma~VI.2]{Braun:2017}. Because the offsets are independent and continuously distributed, conditionally on every transcript the offsets of affine pieces not yet exposed remain absolutely continuous. Consequently, at the next adaptive query, two previously unseen pieces have zero probability of ties. On the complementary event, which has probability one, the maximal-oracle answer can be reconstructed from the preceding transcript and the string-guessing answer. This is precisely the unpredictability argument of \citep[Lemma~VI.4]{Braun:2017}. Therefore the same lower bound holds for the maximal oracle, and hence for every local oracle. Averaging over $\Delta$ proves the theorem.

\end{proof}
\endgroup
\end{document}